\def\eps{\varepsilon}
\def\real{\mathbb{R}}
\def\supp{\mathop{\mbox{\normalfont supp}}\nolimits}
\def\essinf{\mathop{\mbox{\normalfont ess inf}}\limits}
\def\esssup{\mathop{\mbox{\normalfont ess sup}}\limits}
\def\tiende{\mathop{\longrightarrow}\limits}
\def\rad{\mathop{\emph{rad}}}
\def\loc{\mbox{\scriptsize{\normalfont{loc}}}}
\renewcommand\theenumi{\@roman\c@enumi}\makeatother
\newtheorem{teo}{Theorem}[section]
\newtheorem{lema}[teo]{Lemma}
\newtheorem{prop}[teo]{Proposition}
\theoremstyle{definition}
\newtheorem*{xrem}{Remark}
\numberwithin{equation}{section}
\let\oldmarginpar\marginpar
\renewcommand\marginpar[1]{\-\oldmarginpar[\raggedleft\footnotesize #1]
{\raggedright\footnotesize #1}}
\begin{document}

\title[Localization and dimension free estimates for maximal functions]{Localization and dimension free estimates for maximal functions.}

\author[A. Criado]{Alberto Criado}
\address{Alberto Criado \newline\indent Departamento de Matem\'aticas, Universidad Aut\'onoma de Madrid, 28049 Madrid, Spain}
\email{alberto.criado@uam.es}
\thanks{Authors supported by DGU grant MTM2010-16518.}

\author[F. Soria]{Fernando Soria}
\address{Fernando Soria \newline\indent Departamento de Matem\'aticas and Instituto de Ciencias Matem\'aticas CSIC-UAM-UCM-UC3M, Universidad Aut\'onoma de Madrid, 28049 Madrid, Spain}
\email{fernando.soria@uam.es}

\begin{abstract}
In the recent paper [J. Funct. Anal. {\bf 259} (2010)], Naor and Tao introduce a new class of measures with a so-called  micro-doubling  property and present, via martingale theory and probability methods, a localization theorem for the associated maximal functions. As a consequence they obtain a weak type estimate in a general abstract setting for these maximal functions that is reminiscent of the `$n\log n$ result' of Stein and Str\"omberg in Euclidean spaces. 

The purpose of this work is twofold. First we introduce a new localization principle that localizes not only in the time-dilation parameter but also in space. The proof uses standard covering lemmas and selection processes. Second, we show that a uniform condition for micro-doubling in the Euclidean spaces provides indeed dimension free estimates for their maximal functions in all $L^p$ with $p>1$. This is done introducing a new technique that allows to differentiate through dimensions.
\end{abstract}

\subjclass[2010]{42B25}
\keywords{Maximal functions, radial measures, dimension free estimates}

\maketitle

\section{Introduction}

\bigskip

We say that $(X,d,\mu)$ is a metric measure space if $(X,d)$ is a separable metric space and $\mu$ a Radon measure on it. We denote by $B(x,r)$ the open ball centered at $x$ with radius $r$ with respect to the metric $d$, that is
\[
B(x,r)=\{y\in X: d(x,y) < r\}.
\]
We will assume that the measure $\mu$ is non-degenerate. This means that any ball with positive radius has non-zero measure. Given $T\subset (0,\infty)$, for a locally integrable function $f$ over $X$ we define the following centered maximal operator
\[
M_Tf(x)=\sup_{r\in T} \frac1{\mu(B(x,r))}\int_{B(x,r)}|f(y)|\,d\mu(y).
\]
When $T=(0,\infty)$, the operator $M=M_T$ represents the usual Hardy-Littlewood maximal operator. As in the case of evolution equations and semigroup theory we can think of $T$ as a set of times and then $M_T$ is the maximal operator on them.

\bigskip

We will also assume that the measure $\mu$ satisfies a doubling property. This simply says that the measure of a ball is comparable with the measure of certain dilation of it. The classical way of expressing the doubling property uses the dilation factor 2. That is, $\mu$ is doubling if there exists a constant $K>0$ so that for each $x\in X$ and $R>0$ one has $\mu(B(x,2R))\leq K \mu(B(x,R))$.

\bigskip

In these hypotheses one can reproduce the argument of Vitali's covering lem\-ma to show that $M_T$ is weakly bounded on $L^1(\mu)$. That is, there exist a constant $c_{\mu,1}>0$ so that
\[
\mu(\{x\in X: M_Tf(x)>\lambda\}) \leq \frac {c_{\mu,1}}\lambda \int_X |f|d\mu,
\]
for all $\lambda>0$ and all locally integrable $f$ over $X$. Since $\|M_T f\|_{L^\infty(\mu)}\leq \|f\|_{L^\infty(\mu)}$, by interpolation one obtains the $L^p(\mu)$ bounds
\[
\|M_T f\|_{L^p(\mu)}\leq C_{\mu,p} \|f\|_{L^p(\mu)},
\]
for all $p>1$. The order of magnitude of the constants $c_{\mu,1}$ and $C_{\mu,p}$ that we obtain is essentially that of the doubling constant $K$ and $K^{1/p}p/(p-1)$ respectively.

\bigskip

In the case that $X=\real^n$ and $\mu=m_n$ is Lebesgue measure, the value of the doubling constant $K$ is $2^n$. Hence the aforementioned constants $c_{m_n,1}$ and $C_{m_n,p}$ grow exponentially to infinity with the dimension. It has been a matter of interest to know if these constants can be bounded uniformly in dimension. E.M. Stein \cite{Stein1} (detailed proof in \cite{SteinStromberg}) proved this for $C_{m_n,p}$ with $p>1$ when considering the Euclidean metric. J. Bourgain \cite{Bourgain1}, \cite{Bourgain2}, \cite{Bourgain3} and independently A. Carbery \cite{Carbery}, showed that for the metric given by a norm, uniform bounds hold if $p>3/2$. This was extended to all the range $p>1$ by D. M\"uller in \cite{Muller} for the maximal functions associated with the $\ell^q$ metrics, with $1\leq q<\infty$. Observe that this excludes the case $q=\infty$, where the `balls' are cubes with sides parallel to the coordinate axes. Recently J. Bourgain has solved this case by giving uniform bounds on $L^p$ for all $p>1$ (see \cite{Bourgain5}).

\bigskip

The case $p=1$ is more complicated. J.M. Aldaz showed in \cite{Aldaz2} that for the $\ell^\infty$ metric the constants $c_{m_n,1}$ grow to infinity as $n\rightarrow\infty$. As shown by Aubrun \cite{Aubrun}, in this case one has the estimate $c_{m_n,1}\geq C_\eps (\log n)^{1-\eps}$ for each $\eps>0$. It is still unknown if this also happens for other metrics, although there are partial results in the form of upper bounds for the possible growth of the constants. When considering the Euclidean metric a special argument allows to prove that $c_{m_n,1}=\mathcal O(n)$. For metrics given by general norms the best upper bound remains $c_{m_n,1}= \mathcal O(n\log n)$. Both results are due to E.M. Stein and J.O. Str\"omberg in \cite{SteinStromberg}. In this last paper, instead of the usual doubling condition, they used the fact that in $\real^n$ one has
\[
m_n(B(x,(1+1/n)R)) = (1+1/n)^n\ m_n(B(x,R)) \leq e\  m_n(B(x,R)).
\]
That is, the dilation by the factor $(1+1/n)$ does not increase essentially the volume of a ball. This allows to perform a more efficient covering argument with less overlapping than in Vitali's lemma.

\bigskip

A. Naor and T. Tao observed in \cite{NaorTao} that in order to extend Stein and Str\"om\-berg `$n\log n$' bound to general metric measure spaces suffices to assume that dilations by the factor $(1+1/n)$ preserve essentially the volumes of balls and that the measures of intersecting balls with the same radius are comparable. We next explain this conditions and state their result in detail.

\bigskip

Let $(X,d,\mu)$ be a metric measure space. Given a positive number $n\geq 1$, we will say that $\mu$ is $n$-micro-doubling if there exists a constant $K_0>0$ such that for each $x\in X$ and $R>0$ one has
\[
\mu(B((x,(1+1/n )R))\leq K_0\ \mu(B(x,R)).
\]
Of course $n$ here is not necessarily related to any notion of dimension. We will refer to $K_0$ as the $n$-micro-doubling constant.

\bigskip

We will say that a measure $\mu$ is weakly-doubling if the measure of two intersecting balls with the same radius is comparable. That is, there exists a constant $K_1>0$ such that for each $x,y \in X$ and $R>0$ such that $B(x,R)\cap B(y,R)\neq\emptyset$ one has
\[
\mu(B(y,R))\leq K_1\ \mu(B(x,R)).
\]

\bigskip

Following the terminology of A. Naor and T. Tao in \cite{NaorTao}, we will say that a measure $\mu$ is strong $n$-micro-doubling if it is both, $n$-micro-doubling and weakly-doubling. Equivalently, $\mu$ is strong $n$-micro-doubling if there exist a constant $K>0$ so that for each $x\in X$, $R>0$, and $y\in B(x,R)$ one has
\[
\mu(B(y,(1+1/n)R))\leq K\ \mu(B(x,R)).
\]

\bigskip

It is easy to see that every Ahlfors-David $n$-regular metric measure space is strong $n$-micro-doubling. We recall that a metric measure space $(X,d,\mu)$ is said to be Ahlfors-David $n$-regular if there exists a constant $C\geq 1$ so that
\[
\frac1{C}\ R^n \leq \mu(B(x,R)\leq C\ R^n,
\]
for all $x\in X$ and $R>0$. In the Euclidean $n$-dimensional case,  some examples of strong $n$-micro-doubling measures are given by the power densities $|x|^\alpha$, with $\alpha>-n$. See Section \ref{sect.medidas.microdoblantes} for more details.

\bigskip

We will consider maximal operators associated with the following type of time sequences in $(0,\infty)$. A sequence $\{a_k\}_{k\in \mathbb Z}\subset (0,\infty)$ is lacunary if there exist $a>1$ so that $a_{k+1}>a a_k$. If $a=n>1$ in this setting of $n$-micro-doubling measures, we will say explicitly that the time sequence is $n$-lacunary. Now we are ready to state the maximal theorem presented by A. Naor and T. Tao in \cite{NaorTao}.

\bigskip

\begin{teo}\label{teo.naor.tao} Let $(X,d,\mu)$ be a measure metric space, with $\mu$ a strong $n$-micro-doubling measure with constant $K$. Then we have the following weak type estimates
\begin{enumerate}[i)]
\item if\, $T\subset (0,\infty)$ is an $n$-lacunary sequence, then \footnote{Statement $i)$ of the theorem does not appear explicitly in \cite{NaorTao}.}
\[
\mu(\{x\in X: M_T f(x)>\lambda\}) \leq \frac{C_K}\lambda \|f\|_{L^1(\mu)},
\]
\item if\, $T\subset(0,\infty)$ is a lacunary sequence with constant $a$, then
\[
\mu(\{x\in X: M_T f(x)>\lambda\}) \leq \frac{C_K}\lambda \frac{\log n}{\log a} \|f\|_{L^1(\mu)},
\]
\item if\, $T=(0,\infty)$ so that $M=M_T$ is the Hardy-Littlewood maximal operator, then
\[
\mu(\{x\in X: Mf(x)>\lambda\}) \leq \frac{C_K n\log n}\lambda \|f\|_{L^1(\mu)},
\]
\end{enumerate}
where the $C_K$ are constants only depending on $K$.
\end{teo}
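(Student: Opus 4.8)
\emph{Plan of proof.} I would establish (i) first and obtain (ii) and (iii) from it by elementary manipulations of the time set. For (ii) from (i): if $T$ is lacunary with constant $a\in(1,n]$, partition it into $m:=\lceil\log n/\log a\rceil$ subsequences $T_0,\dots,T_{m-1}$, where $T_p$ collects the terms of $T$ whose index lies in the residue class $p$ modulo $m$. Since $a^{m}\ge n$, each $T_p$ is $n$-lacunary, so by (i) it is of weak type $(1,1)$ with constant $C_K$; as $M_Tf=\max_{0\le p<m}M_{T_p}f$ one gets $\mu(\{M_Tf>\lambda\})\le\sum_{p}\mu(\{M_{T_p}f>\lambda\})\le m\,\tfrac{C_K}{\lambda}\|f\|_{L^1(\mu)}$, which is the asserted bound (when $a\ge n$ the sequence $T$ is already $n$-lacunary and (ii) coincides with (i)). For (iii) from (ii): take $T_0=\{(1+1/n)^k:k\in\mathbb Z\}$, lacunary with constant $1+1/n$; if $r\in[(1+1/n)^k,(1+1/n)^{k+1})$ then $B(x,r)\subset B(x,(1+1/n)^{k+1})$ while $n$-micro-doubling gives $\mu(B(x,(1+1/n)^{k+1}))\le K_0\,\mu(B(x,r))$, hence $Mf\le K_0\,M_{T_0}f$; substituting $a=1+1/n$ in (ii) and using $\log(1+1/n)\ge 1/(2n)$ yields the factor $n\log n$.

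Everything thus comes down to (i). After the routine reductions --- inner regularity, to replace $\{M_Tf>\lambda\}$ by an arbitrary compact subset $F$, and monotone convergence, to assume $T=\{a_1<\dots<a_N\}$ finite --- choose for each $x\in F$ a radius $r_x\in T$ with $\int_{B(x,r_x)}|f|\,d\mu>\lambda\,\mu(B(x,r_x))$. I would then select balls by sweeping the radii from largest to smallest: at level $a_j$, among those $x$ with $r_x=a_j$ that do not already lie in a ball chosen at a larger level, pick a maximal $a_j$-separated subset $D_j$ and adjoin $\{B(z,a_j):z\in D_j\}$ to the family $\{B_i\}_i$. By maximality $F\subset\bigcup_iB_i$ \emph{with no enlargement of the balls}, and any two selected balls have centers at distance at least the larger of their two radii. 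Granting a bound $\sum_i\mathbf 1_{B_i}\le C$ on the overlap of the family, one obtains
\[
\lambda\,\mu(F)\ \le\ \lambda\sum_i\mu(B_i)\ <\ \sum_i\int_{B_i}|f|\,d\mu\ =\ \int_X\Big(\sum_i\mathbf 1_{B_i}\Big)|f|\,d\mu\ \le\ C\,\|f\|_{L^1(\mu)},
\]
and (i) follows with $C_K=C$.

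The decisive point --- and the step I expect to be the main obstacle --- is precisely this overlap bound, with $C$ depending only on $K$. A point may lie in many selected balls of the \emph{same} radius $a_j$, whose centers are only $a_j$-separated rather than $2a_j$-separated, and the crude estimate for how many of them there can be replaces $C$ by the doubling number of $\mu$, which for a strong $n$-micro-doubling space is of size $K_0^{cn}$ --- exponential in $n$, exactly the loss incurred by Vitali's lemma and the one that must be circumvented. What should rescue the argument is the $n$-lacunarity: consecutive levels differ by a factor at least $n$, so the centers of the selected balls through a fixed point are squeezed into thin annuli at geometrically separated scales, and every measure comparison one actually needs is then between two meeting balls of the \emph{same} radius (controlled by the weak-doubling constant $K_1$) or between a ball and a dilate of it by a factor at most $1+2/n$ (controlled by one or two applications of the $n$-micro-doubling constant $K_0$) --- never between a ball and a large dilate of itself. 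Converting this geometric picture into the clean bound $\sum_i\mathbf 1_{B_i}\le C(K_0,K_1)$ is the crux; the cleanest way to carry it out --- and, I suspect, the one the paper takes --- is to invoke the space--time localization principle developed in the next section, which encapsulates exactly this controlled interplay of weak-doubling and micro-doubling, and to read (i) off from it. (In $\real^n$ with the Euclidean metric $K_0\le e$ and $K_1=1$, so the resulting estimate is dimension free, as announced in the abstract.)
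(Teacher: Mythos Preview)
Your reductions (i)$\Rightarrow$(ii) and (ii)$\Rightarrow$(iii) are correct and coincide with what the paper does (it attributes these implications to \cite{MenarguezSoria2}; your single-sequence version of (iii) with $a=1+1/n$ is a harmless variant of the argument described there).

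For (i) there is a genuine gap. The overlap inequality $\sum_i\mathbf 1_{B_i}\le C(K_0,K_1)$ that you set up as the target is \emph{false} for your selection, and the localization theorem does not rescue it. The problem is already present at a single scale: a point $x$ can lie in as many balls $B(z,a_j)$, $z\in D_j$, as there are $a_j$-separated points in $B(x,a_j)$, and a volume comparison bounds that number only by $\mu(B(x,3a_j/2))/\inf_z\mu(B(z,a_j/2))$, which in a strong $n$-micro-doubling space is of order $K_0^{cn}$ --- the very exponential loss you flagged. The $n$-lacunarity of $T$ says nothing about interactions within one level, so it cannot repair this; and the localization principle is a statement about operator norms, not about the combinatorics of your particular cover.

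The paper does deduce (i) from the localization theorem (Theorem~\ref{teo.localizacion}), but through a different endgame that bypasses overlap counts altogether. Because $T$ is $n$-lacunary, each block $T\cap[n^k,n^{k+1}]$ contains at most one radius $a_j$, so the localized operator $M_k$ is the \emph{single-radius average} $M_{\{a_j\}}f(x)=\mu(B(x,a_j))^{-1}\int_{B(x,a_j)}|f|\,d\mu$. The step you are missing is that weak-doubling alone, via Fubini, makes every $M_{\{a_j\}}$ bounded from $L^1(\mu)$ to $L^1(\mu)$ (strong type, not merely weak) with norm at most $K_1$: write $\int_X M_{\{a_j\}}f\,d\mu$ as a double integral, use the symmetry $y\in B(x,a_j)\Leftrightarrow x\in B(y,a_j)$ together with $\mu(B(x,a_j))\ge K_1^{-1}\mu(B(y,a_j))$ for intersecting balls, and integrate in $x$ first. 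This gives $\sup_k\|M_k\|_{L^1\to L^{1,\infty}}\le K_1$, and the localization inequality then yields (i) with a constant depending only on $K$.
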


\bigskip

Note that part $iii)$ is a generalization of the `$n\log n$' bound by E.M. Stein and J.O. Str\"omberg (see \cite{SteinStromberg}) in normed Euclidean spaces. In this setting, parts $i)$ and $ii)$ are due to M.T. Men\'arguez and the second author in \cite{MenarguezSoria2}, as well as the chain of implications $i)\Rightarrow ii)\Rightarrow iii)$.

\bigskip

Moreover, A. Naor and T. Tao showed also in \cite{NaorTao} that the $\mathcal O(n\log n)$ bound is optimal even in the setting of Ahlfors-David $n$-regular spaces, by constructing a sequence of such spaces $(X_n,d_n,\mu_n)$ so that
\[
\|M_{\mu_n}\|_{L^1(\mu_n)\rightarrow L^{1,\infty}(\mu_n)}\geq Cn\log n.
\]

\bigskip

Theorem \ref{teo.naor.tao} can be proved as in \cite{MenarguezSoria2} or by an argument of Lindenstrauss present in \cite{Lindenstrauss} (see also \cite{NaorTao}). However, the main contribution of A. Naor and T. Tao in \cite{NaorTao} is the following nice localizaton principle for maximal operators, from which they obtained Theorem \ref{teo.naor.tao} as a corollary.

\bigskip

\begin{teo}\label{teo.localizacion} Let $(X,d,\mu)$ be a metric measure space and let $\mu$ be $n$-micro-doubling with constant $K_0$. If \ $T\subset (0,\infty)$ and $p\geq 1$, then we have the following localizaton property for the weak $L^p$ norms of the associated maximal operator
\[
\|M_T\|_{L^p\rightarrow L^{p,\infty}} \leq C_{K_0}+ C_{K_0}' \sup_{k\in\mathbb Z} \|M_k\|_{L^p\rightarrow L^{p,\infty}}.
\]
Here $M_k$ denotes the restriction of $M_T$ to times in $[n^k,n^{k+1}]$, that is $M_k = M_{T\cap[n^k,n^{k+1}]}$, and $C_{K_0}$ and $C_{K_0}'$ are constants only depending on $K_0$.\footnote{The constants $C_{K_0}$ and $C_{K_0}'$ obtained in \cite{NaorTao} are of the order of $\mathcal O(K_0)$ and $\mathcal O(1+\log\log K_0/(1+\log n))$ respectively.}
\end{teo}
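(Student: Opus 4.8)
The plan is to dominate $M_Tf$ by the maximal function of a randomized martingale on $(X,\mu)$ adapted to the dilation blocks $[n^k,n^{k+1}]$, plus an error term that only involves $f$ through a bounded window of consecutive blocks. The martingale maximal function $f^\ast$ is bounded on every $L^p(\mu)$, $p\ge1$, independently of the dimension and of $K_0$, by Doob's inequality — in fact $\|f^\ast\|_{L^{p,\infty}}\le\|f\|_{L^p}$ follows from the weak-type Doob inequality and H\"older's inequality — which is why the coefficient of $\sup_k\|M_k\|$ can be kept close to $1$; the operators $M_k=M_{T\cap[n^k,n^{k+1}]}$ are exactly what will be needed for the error term. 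The $n$-micro-doubling hypothesis will be used only to confine the error to a bounded window and to keep it small, and the resulting balance is the source of the additive $\mathcal O(K_0)$ and of $C'_{K_0}$.

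First I would attach to $X$ a nested family of countable measurable partitions $\{\mathcal P_k\}$, with $\mathcal P_{k-1}$ refining $\mathcal P_k$ and every atom of $\mathcal P_k$ comparable to a ball of radius $n^k$. Since no global doubling bound is available, these partitions are built at random, with random centers and random cutting radii in an $\sim n$-wide multiplicative range; the key output is an \emph{escape estimate}: writing $Q_k(x)$ for the atom of $\mathcal P_k$ through $x$,
\[
\mathbb P\big[\,B(x,n^{j+1})\not\subseteq Q_{j+m}(x)\,\big]\ \le\ C\,n^{1-m}\,(1+\log K_0)\qquad(m\ge1),
\]
the factor $1+\log K_0$ being the $n$-micro-doubling bound for the relative $\mu$-measure of the centered annulus swept by the admissible cutting radii. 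Next, fixing $\lambda$, I would run a Calder\'on--Zygmund stopping-time decomposition $f=g+b$ at level $\asymp\lambda/K_0$ against the $\mathcal P_k$'s: one application of micro-doubling gives parent-to-child $\mu$-ratio $\le K_0$, so $\|g\|_\infty\le\lambda$ (whence $g$ does not contribute to $\{M_Tf>\lambda\}$) and the disjoint stopping atoms $\{P_i\}$ satisfy $\sum_i\mu(P_i)\le CK_0\lambda^{-1}\|f\|_{L^1}$, with the analogous $L^p$ bound for general $p$. The crux is that a ball $B(x,r)$ with $r\in[n^j,n^{j+1}]$ only feels the stopping atoms $P_i$ that \emph{straddle} its boundary, and these are confined to stopping atoms whose scale lies within a fixed window of $J+m$ blocks around $j$: off the escape event no atom coarser than $Q_{j+m}(x)$ can straddle $\partial B(x,r)$ (nestedness), while stopping atoms finer than scale $n^{j-J}$ lie in a thin centered annulus, of relative $\mu$-measure $\le C\,n^{-J}(1+\log K_0)$ by $n$-micro-doubling.

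To put things together: off $\{f^\ast>c\lambda\}$ and off the escape event one obtains $\{M_Tf>C_{K_0}\lambda\}\subseteq\bigcup_j\{M_j(\mathbf 1_{G^{(j)}})>c\}$, where $G^{(j)}=\bigcup\{P_i:|\mathrm{scale}(P_i)-j|\le J+m\}$, modulo an absorbed fine-tail term. Summing the weak-$(p,p)$ bounds for each $M_j$ over the partition $\mathcal P_{j+m}$, and then over $j$, costs only the window multiplicity $J+m$, since each $P_i$ belongs to $O(J+m)$ of the sets $G^{(j)}$; so $\sum_j\mu(G^{(j)})\le(J+m)\sum_i\mu(P_i)$, and together with Doob for the $f^\ast$ term and the escape estimate this yields
\[
\|M_T\|_{L^p\to L^{p,\infty}}\ \le\ CK_0\,+\,C\big(n^{1-m}+n^{-J}\big)(1+\log K_0)\,\|M_T\|_{L^p\to L^{p,\infty}}\,+\,C(J+m)\,\sup_k\|M_k\|_{L^p\to L^{p,\infty}}.
\]
Taking integers $J,m\asymp1+\frac{\log\log K_0}{1+\log n}$ forces the middle coefficient below $\tfrac12$; absorbing it on the left gives the asserted inequality with $C_{K_0}=\mathcal O(K_0)$ and $C'_{K_0}=\mathcal O\big(1+(1+\log n)^{-1}\log\log K_0\big)$.

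The hard part, I expect, will be the construction of the random nested partition with ball-like atoms in the absence of any global doubling assumption, together with the confinement of the cross-scale interaction to a window of width $\mathcal O(1+\log\log K_0/\log n)$: it is precisely the $n$-micro-doubling smoothing of $\mu$ at scale $n$ that permits this confinement with only the stated logarithmic loss, and making that loss sharp — so that $C'_{K_0}$ really is essentially $1$ — is what demands the delicate bookkeeping above. Doob's inequality, the Calder\'on--Zygmund splitting, and the per-scale summation over the partition are routine.
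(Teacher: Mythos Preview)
Your outline is essentially the original Naor--Tao argument from \cite{NaorTao}: random nested partitions with an escape probability estimate, Doob's maximal inequality for the associated martingale, a Calder\'on--Zygmund stopping-time splitting, and a cross-scale confinement of the bad part to a window of width $\mathcal O\big(1+\log\log K_0/(1+\log n)\big)$. As a sketch of that approach it is plausible, though you correctly flag that the construction of the random partitions with ball-like atoms and the sharp escape bound is the genuinely delicate step; you should also note that the absorption at the end requires a priori finiteness of $\|M_T\|_{L^p\to L^{p,\infty}}$, which must be arranged by a preliminary truncation of $T$.

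The paper, however, takes a deliberately different route. Its stated purpose in Section~\ref{seccion.localizacion} is precisely to \emph{avoid} the probabilistic machinery: Theorem~\ref{teo.localizacion} is deduced from a stronger localization statement, Theorem~\ref{teo.localizacion.nuevo}, which localizes not only in the time parameter $k$ but also in space via a disjoint family $\{A_k\}$. The proof of Theorem~\ref{teo.localizacion.nuevo} is entirely geometric: one fixes the level set $\{\lambda<M_Tf\le 2\lambda\}$, passes to a finite family of balls $B$ with averages in $(\lambda,2\lambda]$, associates to each $B$ a slightly shrunken $\tilde B$ and a core $B^0$, disjointifies the cores into sets $D_B$, forms auxiliary functions $g_B=\mu(D_B)\mu(B)^{-1}\chi_{\tilde B}$ and $G_k=\sum_{B\in\mathcal A_k}g_B$, and then runs a greedy selection (from large scales to small) that keeps only those balls on which the accumulated coarser-scale sum $\sum_{j<m}\tilde G_{k_j}$ stays $\le 1$. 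The $n$-micro-doubling enters only through the comparison $\mu(B^\ast)\le K_0\mu(B)$, which controls the discarded mass by a Chebyshev argument. No randomness, no martingales, no Doob.

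What each buys: your (Naor--Tao) approach gives the explicit dependence $C'_{K_0}=\mathcal O\big(1+\log\log K_0/(1+\log n)\big)$ that the footnote records, at the cost of the random-partition construction. The paper's approach is elementary and self-contained---covering lemmas and a stopping-time selection in the spirit of Stein--Str\"omberg---and yields as a bonus the spatial localization (the disjoint $A_k$'s), but the constants it produces depend only on $K_0$ without the refined double-log form being tracked.
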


\bigskip

The proof given by A. Naor and T. Tao of this result is probabilistic and relies on random martingales and Doob-type maximal inequalities. Here we present (see Theorem \ref{teo.localizacion.nuevo}) a more geometrical proof based on covering lemmas and selection processes that is closer in spirit to the arguments in \cite{SteinStromberg}. In addition, our result considers localization not only in time but also in space (see the statement of the theorem). The proof of this and the connection with Theorem \ref{teo.naor.tao} will be presented in Section \ref{seccion.localizacion}.

\bigskip

In the rest of the paper we restrict ourselves to the Euclidean case. We study measures given by a radial density, so that they can be defined in all dimensions. It is known that if these measures are finite, and hence are neither weakly-doubling nor $n$-micro-doubling, the (weak) $L^p$ operator norms of the associated maximal functions grow exponentially to infinity with the dimension at least for a range of values of $p$ near 1 (see \cite{Aldaz}, \cite{AldazPerezLazaro} and \cite{Criado}). There are cases in which this range of $p$'s may consist of the whole interval $[1,\infty)$. An example of this is given by the Gaussian measure (see \cite{CriadoSjogren}).

\bigskip

The situation changes if we consider measures that are strong $n$-micro-doub\-ling in each $\real^n$ with constants uniformly bounded in dimension. In this case Theorem \ref{teo.naor.tao} applies and the weak $L^1$ operator norms of the maximal function grow at most like $\mathcal O(n\log n)$. In Section \ref{sect.medidas.microdoblantes} we study maximal operators associated with uniformly weakly doubling measures. For such measures $\mu$ the main result of this section asserts that $M_\mu$ is uniformly bounded on $L^p(\mu,\real^n)$ for all $p>1$. The same thing is true in weak $L^1$ but restricting the action of $M_\mu$ to radial functions.

\bigskip

The previous results are obtained via the following characterization: $\mu$ is uniformly weakly doubling if and only if its density is essentially constant over dyadic annuli. This equivalence is proved using a new method of differentiation through dimensions presented in Section \ref{sect.diferenciacion}.

\bigskip

The case of decreasing densities is treated in Section \ref{sect.decrecientes}. The situation here is that the uniform weakly doubling property can be obtained from surprisingly mild conditions.

\bigskip

Finally, Section \ref{sect.further} contains examples of doubling measures with associated maximal operators failing to have uniform bounds.

\bigskip

\section{Localization principles}\label{seccion.localizacion}

\bigskip

In this section we will obtain Theorems \ref{teo.naor.tao} and \ref{teo.localizacion} as a consequence of yet another localization theorem, whose proof avoids technical arguments from probability theory. The statement is the following:

\bigskip

\begin{teo}\label{teo.localizacion.nuevo}
Let $(X,d,\mu)$ be a metric measure space and let $\mu$ be $n$-micro-doubling with constant $K_0$. If \ $T\subset (0,\infty)$ and $p\geq 1$ are fixed, then for each locally integrable function $f$ over $X$ and each $\lambda>0$ one can find a disjoint collection of measurable sets $\{A_k\}_{k\in \mathbb Z}$ such that
\begin{eqnarray*}
\mu(\{x\in X\!\!\!\!\!&:&\!\!\!\! \lambda<M_Tf(x)\leq2\lambda\}) \\&\leq& C_1 \left(\frac{1}{\lambda^p}\|f\|_{L^p(\mu)}^p + \sum_{k\in\mathbb Z}\mu(\{x\in X: M_k(f\chi_{A_k})(x)>C_2\lambda\})\right),
\end{eqnarray*}
where $C_1$ and $C_2$ are constants that only depend on $K_0$.
\end{teo}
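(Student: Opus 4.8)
The plan is to run a single stopping-time/covering argument at the fixed level $\lambda$, calibrated to the micro-doubling scale $1+1/n$, and to split the level set $E=\{x\in X:\lambda<M_Tf(x)\le 2\lambda\}$ into a piece whose measure is absorbed by $\lambda^{-p}\|f\|_{L^p(\mu)}^p$ and a piece swallowed by the localized operators $M_k$. Concretely I aim to produce a disjoint family $\{A_k\}_{k\in\mathbb Z}$ and a measurable set $N$ with $E\subseteq N\cup\bigcup_{k}\{x:M_k(f\chi_{A_k})(x)>C_2\lambda\}$ and $\mu(N)\le C_1\lambda^{-p}\|f\|_{L^p(\mu)}^p$; the asserted inequality then follows at once from countable subadditivity of $\mu$, so no control on the overlap of these sets is needed at the end. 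We may assume $f\ge 0$ and, by inner regularity of $\mu$, that $f$ is bounded with bounded support, so that every family of balls below is finite (the general case follows by a routine limiting argument). For each $x\in E$ fix $r_x\in T$ with $\mu(B(x,r_x))^{-1}\int_{B(x,r_x)}f\,d\mu>\lambda$, and let $k_x$ be the integer with $r_x\in[n^{k_x},n^{k_x+1})$, so that $r_x$ is an admissible radius for $M_{k_x}$.

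\textbf{Selection and the coarse part.} Apply the Vitali selection lemma to $\{B(x,r_x):x\in E\}$, picking balls greedily in order of decreasing radius; this yields a pairwise disjoint subfamily $\{B_i=B(x_i,r_i)\}_i$ with $x_i\in E$ such that every $B(x,r_x)$ meets some $B_i$ with $r_i\ge r_x$. Let $E_{\mathrm c}$ be the set of $x\in E$ whose partner $B_i$ can be taken with $r_i\ge n\,r_x$; then $d(x,x_i)<r_x+r_i\le(1+1/n)r_i$, hence $E_{\mathrm c}\subseteq\bigcup_i B(x_i,(1+1/n)r_i)$. Since the $B_i$ are disjoint and each satisfies $\int_{B_i}f\,d\mu>\lambda\,\mu(B_i)$, splitting $f=f\chi_{\{f>\lambda/2\}}+f\chi_{\{f\le\lambda/2\}}$ gives $\mu(B_i)<(2/\lambda)\int_{B_i}f\chi_{\{f>\lambda/2\}}\,d\mu$; summing and using $f\le(2f/\lambda)^{p-1}f$ on $\{f>\lambda/2\}$ yields $\sum_i\mu(B_i)\le(2/\lambda)^p\|f\|_{L^p(\mu)}^p$. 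Together with $n$-micro-doubling this gives
\[
\mu(E_{\mathrm c})\le\sum_i\mu\!\left(B(x_i,(1+1/n)r_i)\right)\le K_0\sum_i\mu(B_i)\le\frac{2^pK_0}{\lambda^p}\,\|f\|_{L^p(\mu)}^p ,
\]
so $E_{\mathrm c}$ goes into $N$. Note that this step dilates only by the factor $1+1/n$, for which the micro-doubling hypothesis is tailor-made.

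\textbf{The fine part and the sets $A_k$.} It remains to treat $E_{\mathrm f}=E\setminus E_{\mathrm c}$, where each $x$ witnesses a large average on a ball whose radius is within a factor $n$ of a selected one — "essentially at its own scale". Here the natural route is to build the $A_k$ by a second selection performed band by band, sweeping from the coarsest (largest) scale to the finest and putting into $A_k$ only those points of (a $1+\mathcal O(1/n)$ enlargement of) the chosen band-$k$ balls that were not already claimed at a coarser band; this makes the $A_k$ pairwise disjoint by construction and attaches to each relevant $x$ a band $k\in\{k_x,k_x+1\}$. What must then be verified is that, for $x\in E_{\mathrm f}$ outside a further exceptional set of measure $\le C\lambda^{-p}\|f\|_{L^p(\mu)}^p$ (also placed in $N$), one has $\mu(B(x,r_x))^{-1}\int_{B(x,r_x)}f\chi_{A_{k}}\,d\mu>C_2\lambda$ — equivalently, that the mass $\int_{B(x,r_x)\setminus A_{k}}f\,d\mu$ lost on restricting to $A_k$ is at most a fixed fraction of $\lambda\,\mu(B(x,r_x))$. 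This places such $x$ in $\{M_k(f\chi_{A_k})>C_2\lambda\}$ and, together with the estimate for $E_{\mathrm c}$, finishes the proof.

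\textbf{The main obstacle.} The whole difficulty is concentrated in this lost-mass bound. The part of $B(x,r_x)$ lying outside $A_k$ is precisely the part absorbed by selected balls of strictly coarser scale; when such a ball is much larger than $B(x,r_x)$ — say at band $\ge k_x+2$ — it contains $B(x,r_x)$ up to a $1+\mathcal O(1/n)$ dilation, and since its average is $\le 2\lambda$, micro-doubling lets one bound the joint $f$-mass of these balls inside $B(x,r_x)$ by a fraction of $\lambda\,\mu(B(x,r_x))$ that can be made as small as desired. The real obstacle is that one must avoid at all costs a dilation by a \emph{fixed} factor: the naive Vitali merging of two comparable overlapping balls forces the classical $3r$, and since micro-doubling controls only the dilation $1+1/n$, a fixed-factor dilation costs $K_0^{\mathcal O(n)}$, which for Lebesgue measure (where $K_0\le e$) is exponential in $n$ and would destroy the dependence "on $K_0$ alone" demanded by the statement. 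Keeping every enlargement of the form $1+\mathcal O(1/n)$ while still controlling the overlap of the coarse balls against $\mu(B(x,r_x))$, and disposing of the borderline band $k_x+1$ by throwing its contribution into the $\lambda^{-p}\|f\|_{L^p(\mu)}^p$ error, is the technical heart of the argument — it is exactly here that the present geometric selection process takes the place of the random martingales and Doob inequalities of Naor and Tao. Everything else (the reductions, the Vitali step, the handling of $E_{\mathrm c}$) is routine.
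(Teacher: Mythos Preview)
Your proposal is not a proof: you correctly isolate the difficulty (the ``lost-mass bound'' for $E_{\mathrm f}$) but do not establish it, and the paragraph labeled \emph{The main obstacle} is a description of what must be shown rather than an argument. In particular, your claim that ``when such a ball is much larger than $B(x,r_x)$ \dots\ micro-doubling lets one bound the joint $f$-mass of these balls inside $B(x,r_x)$ by a fraction of $\lambda\,\mu(B(x,r_x))$ that can be made as small as desired'' is precisely the point requiring proof, and it is not obvious: many coarse selected balls may touch $B(x,r_x)$, and you have no a~priori control on their number or on how their intersections with $B(x,r_x)$ overlap. Your Vitali step gives disjointness of the $B_i$, but you need something much finer --- control of the $f$-mass on $B(x,r_x)\setminus A_k$ --- and nothing in the outline explains how the disjointness of the $B_i$ yields this.

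The paper's argument is structurally different and sidesteps this obstacle. Rather than running Vitali on the witnessing balls, it works with the family $\mathcal A$ of balls with average in $(\lambda,2\lambda]$, shrinks each $B$ to a concentric $\tilde B$ of radius $\ge R_B/(1+1/n)$ still carrying average $>\lambda$, and attaches to $B$ the tiny core $B^0$ of radius $R_B-R_{\tilde B}\le R_B/n$. After the reduction to a finite family and an odd/even split of the bands (this is how the borderline-band issue is handled, not by dumping it into the error term), the cores are disjointified into sets $D_B$, and one introduces the \emph{weighted} indicators $g_B=\frac{\mu(D_B)}{\mu(B)}\chi_{\tilde B}$ and $G_k=\sum_{B\in\mathcal A_k}g_B$. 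The selection runs from the coarsest band down, keeping $B\in\mathcal A_{k_m}$ only if $\sum_{j<m}\tilde G_{k_j}\le 1$ on $B$; a one-line Chebyshev argument using only the $(1+1/n)$-dilation shows the discarded balls contribute at most $K_0$ times the kept ones. The sets $A_k$ are then the disjointified supports of the $\tilde G_k$, and the selection criterion gives directly $\sum_k\tilde G_k\le 1+\sum_k\tilde G_k\chi_{A_k}$, which converts $\sum\mu(D_B)$ into $\lambda^{-1}\|f\|_1$ plus a sum over balls whose average restricted to $A_k$ exceeds $\lambda/2$; the cores of the latter lie in the $M_k$-level sets. The key point is that the ``lost mass'' is never estimated ball by ball: it is controlled \emph{globally} through the weighted-overlap condition $\sum\tilde G\le 1$, and this device is what your outline lacks.
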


\bigskip

It is not difficult to show that  Theorem \ref{teo.localizacion.nuevo} implies Theorem \ref{teo.localizacion}. To see this, given a locally integrable $f$ and $\lambda>0$ we write
\begin{eqnarray*}
E_\lambda&=&\{x\in X: M_Tf(x)>\lambda\},\\
F_\lambda&=&\{x\in X: \lambda<M_Tf(x)\leq2\lambda\}.
\end{eqnarray*}
Note that by the disjointness of the collection $\{A_k\}_{k\in \mathbb Z}$ we have
\begin{eqnarray*}
\sum_{k\in\mathbb Z}\mu(\{x\in X: M_k(f\chi_{A_k})(x)>C_2\lambda\}) &\leq& \frac{C_2^{-p}}{\lambda^p} \sum_{k\in\mathbb Z} \|M_k\|_{L^p\rightarrow L^{p,\infty}}^p \int_{A_k}|f|^p\,d\mu \\&\leq& \frac{C_2^{-p}}{\lambda^p} \sup_{k\in\mathbb Z} \|M_k\|_{L^p\rightarrow L^{p,\infty}}^p \int_X|f|^p\,d\mu.
\end{eqnarray*}
Hence Theorem \ref{teo.localizacion.nuevo} gives the estimate
\[
\mu(F_\lambda) \leq \frac{C_{K_0}}{\lambda^p} \left(1+\sup_{k\in\mathbb Z} \|M_k\|_{L^p\rightarrow L^{p,\infty}}^p\right)  \|f\|_{L^p(\mu)}^p.
\]
This implies
\begin{eqnarray*}
\mu(E_\lambda)&=&\mu\left(\bigcup_{j=0}^\infty F_{2^j\lambda}\right) = \sum_{j=0}^\infty \mu(F_{2^j\lambda})\\&\leq&  \sum_{j=0}^\infty \frac{C_{K_0}}{(2^j\lambda)^p} \left(1+\sup_{k\in\mathbb Z} \|M_k\|_{L^p\rightarrow L^{p,\infty}}^p\right)  \|f\|_{L^p(\mu)}^p \\&=& \frac{C_{K_0}}{\lambda^p} \left(1+\sup_{k\in\mathbb Z} \|M_k\|_{L^p\rightarrow L^{p,\infty}}^p\right)  \|f\|_{L^p(\mu)}^p,
\end{eqnarray*}
as wanted.

\bigskip

Also part $i)$ of Theorem \ref{teo.naor.tao} follows from Theorem \ref{teo.localizacion} using the following argument.

\bigskip

If $T=\{a_j\}_{j\in\mathbb Z}$ is an $n$-lacunary sequence then each interval of the form $[n^k,n^{k+1}]$ contains at most one element of $T$. Theorem \ref{teo.localizacion} implies that
\[
\|M_T\|_{L^1\rightarrow L^{1,\infty}} \leq C_K \sup_{j\in \mathbb Z} \|M_{\{a_j\}}\|_{L^1\rightarrow L^{1,\infty}}.
\]
We only have to show now that $M_{\{a_j\}}$ is bounded independently of $j$. Since
\[
M_{\{a_j\}}f(x)=\frac{1}{\mu(B(x,a_j))}\int_{B(x,a_j)} |f(y)|\,d\mu(y),
\]
by the weak doubling property of $\mu$ and Fubini Theorem we have
\begin{eqnarray*}
\|M_{\{a_j\}}f\|_{L^1} &=& \int_X \frac{1}{\mu(B(x,a_j))}\int_X \chi_{B(x,a_j)}(y) |f(y)|\,d\mu(y)\,d\mu(x)\\ &\leq& \int_X \int_X \frac{K_1}{\mu(B(y,a_j))}\chi_{B(y,a_j)}(x) |f(y)|\,d\mu(y)\,d\mu(x) \\ &=& \int_X \frac{K_1}{\mu(B(y,a_j))}\int_{B(y,a_j)}\,d\mu(x)\,|f(y)|\,d\mu(y) = K_1 \|f\|_{L^1}.
\end{eqnarray*}
This says, in particular, that each $M_{\{a_j\}}$ is bounded on $L^1$ with operator norm bounded by $K_1$.

\bigskip

The implications $i)\Rightarrow ii)$ and $ii)\Rightarrow iii)$ of Theorem \ref{teo.naor.tao} can be obtained through the same generic arguments given in \cite{MenarguezSoria2}. The first one says that every maximal operator associated with a lacunary sequence is essentially majorized by $\log n$ operators associated with $n$-lacunary sequences. The second one says that the full maximal operator is majorized by $n$ operators, each associated with a lacunary sequence.

\bigskip

We now give a proof of Theorem \ref{teo.localizacion.nuevo}.

\bigskip

\begin{proof}[Proof of Theorem \ref{teo.localizacion.nuevo}]
Given a locally integrable $f$ and $\lambda>0$ we write
\[
F_\lambda=\{x\in X: \lambda<M_Tf(x)\leq2\lambda\}.
\]
Once we determine what the sets $A_j$ are, we will have to prove that
\[
\mu(F_\lambda)\leq C_1 V_{\lambda,p}^f,
\]
where we have used the notation
\[
V_{\lambda,p}^f=\frac{1}{\lambda^p}\|f\|_{L^p(\mu)}^p + \sum_{k\in\mathbb Z}\mu(\{x\in X: M_k(f\chi_{A_k})(x)>C_2\lambda\}).
\]

\bigskip

Note first that it is enough to prove the result for $p=1$. To see this, given a function $f$ in $L^p(\mu)$ we consider $f=f^\lambda+f_\lambda$ where
\begin{eqnarray*}
f^\lambda&=&f\chi_{\{|f|>\lambda\}} \in L^1(\mu),\\
f_\lambda&=&f\chi_{\{|f|\leq \lambda\}} \in L^\infty(\mu).
\end{eqnarray*}
Since
\[
M_T f(x)\leq M_T f^\lambda(x)+M_T f_\lambda(x) \leq M_T f^\lambda(x)+\lambda,
\]
for $M_Tf(x)>2\lambda$ to hold it is necessary that $M_T f^\lambda(x) >\lambda$. Hence,
\[
F_{2\lambda}\subset \{x\in X: \lambda<M_Tf^\lambda(x)\leq 4\lambda\} =
G_1\cup G_2,
\]
with
\begin{eqnarray*}
G_1&=& \{x\in X: \lambda<M_Tf^\lambda(x)\leq 2\lambda\},\\ G_2&=&\{x\in X: 2\lambda<M_Tf^\lambda(x)\leq 4\lambda\}. \end{eqnarray*}
Then since $f^\lambda\in L^1(\mu)$ we apply the result for $p=1$ to $G_1$ and $G_2$ to obtain
\begin{eqnarray*}
\mu(F_{2\lambda}) &\leq& 2\max(\mu(G_1),\mu(G_2))\\ &\leq&C \left( C_1 \int_X  \frac{|f^\lambda|}{\lambda}\,d\mu + \sum_{k\in\mathbb Z}\mu(\{x\in X: M_k(f^\lambda\chi_{A_k})(x)>C_2\lambda\})\right) \\ &\leq&C \left( C_1 \int_X  \frac{|f|^p}{\lambda^p}\,d\mu + \sum_{k\in\mathbb Z}\mu(\{x\in X: M_k(f\chi_{A_k})(x)>C_2\lambda\})\right),
\end{eqnarray*}
for each $p\geq 1$. Since $\lambda$ is arbitrary the result is proved for any $p\geq 1$.

\bigskip

Now we prove the result in the case that $p=1$, that is
\[
\mu(F_\lambda) \leq C_1 V_{\lambda,1}^f
\]
 We consider the following collections of balls
\begin{eqnarray*}
\mathcal B&=&\{ B(x,R): x\in X,\,R\in T\},\\
\mathcal B_k&=& \{B(x,R):x\in X,\,R\in T\cap [n^k,n^{k+1}]\}.
\end{eqnarray*}
Given a ball $B\in \mathcal B$ we denote by $z_B$ its center and by $R_B$ its radius. We define the collection
\[
\mathcal A =\left\{B\in\mathcal B: \lambda<\frac{1}{\mu(B)}\int_B |f|\,d\mu\leq 2\lambda\right\}.
\]
For each $B\in\mathcal A$ one can find a concentric ball $\tilde B$ so that $R_B/(1+1/n)\leq R_{\tilde B} < R_B$ and
\[
\frac{1}{\mu(B)}\int_{\tilde B} |f|\,d\mu > \lambda.
\]
We will write $B^0=B(z_B,R_B-R_{\tilde B})$. Since
\[
F_\lambda \subset \bigcup_{B\in \mathcal A} B^0,
\]
it suffices to prove that
\[
\mu\left(\bigcup_{B\in \mathcal A} B^0\right) \leq C_1 V_{\lambda,1}^f.
\]
It is not difficult to see (see the argument preceding (\ref{bolas.en.nivel}) below) that $\cup_{B\in\mathcal A} B^0$ is contained in a level set of $M_Tf$ and, as a consequence, has finite $\mu$-measure. Using that $X$ is separable and the monotone convergence there exists $\mathcal A'\subset \mathcal A$ with $\sharp \mathcal A'<\infty$ such that
\[
\mu\left(\bigcup_{B\in \mathcal A'} B^0\right) \geq \frac12 \mu\left(\bigcup_{B\in \mathcal A} B^0\right).
\]
Hence, it is enough to show that
\[
\mu\left(\bigcup_{B\in \mathcal A'} B^0\right) \leq C_1 V_{\lambda,1}^f,
\]
with $C$ independent of $\mathcal A'$. Writing
\[
\mathcal A_k:=\{B\in \mathcal A': B\in \mathcal B_k\},
\]
we have that $\mathcal A' = \mathcal A^1 \cup \mathcal A^2$ with
\begin{eqnarray*}
\mathcal A^1&=& \bigcup_{k \mbox{\scriptsize{ odd}}} \mathcal A_k,\\
\mathcal A^2&=& \bigcup_{k \mbox{\scriptsize{ even}}} \mathcal A_k.
\end{eqnarray*}
Assume that $\mu(\bigcup_{B\in\mathcal A^1}B^0)\geq \mu(\bigcup_{B\in\mathcal A^2}B^0)$ (if not, interchange the names), then $\mu(\bigcup_{B\in\mathcal A^1}B^0)\geq \frac12\mu(\bigcup_{B\in\mathcal A'}B^0)$ and we just need to prove that
\[
\mu\left(\bigcup_{B\in\mathcal A^1}B^0\right) \leq C_1 V_{\lambda,1}^f.
\]
For the sake of simplicity in the notation we rename $\mathcal A=\mathcal A^1$.

\bigskip

Now we want to write the set $\bigcup_{B\in\mathcal A}B^0$ as a union of disjoint sets. Observe that $\mathcal A = \{B_j\}_{j=1,\cdots, J}$ for certain $J$. We define $D_{B_1}=B_1^0$, and if $D_{B_1},\cdots, D_{B_m}$ are already defined, we take
\[
D_{B_{m+1}}=B_{m+1}\setminus\bigcup_{j=1}^m B_j^0.
\]
Then, we have
\[
\mu\left(\bigcup_{B\in\mathcal A} B^0\right) = \mu\left(\bigcup_{B\in\mathcal A} D_B\right).
\]
We also define the functions
\[
g_B(x)=\frac{\mu(D_B)}{\mu(B)} \chi_{\tilde B}(x)
\]
for $B\in\mathcal A$ and
\[
G_k(x)=\sum_{B\in\mathcal A_k} g_B(x).
\]

\bigskip

We start a selection process. Take $k_1$ as the largest $k\in \mathbb Z$ with $\mathcal A_k\neq \emptyset$, then we take $\tilde G_{k_1} = G_{k_1}$ and $\tilde{\mathcal A}_{k_1} = \mathcal A_{k_1}$. Once $\tilde G_{k_1},\cdots,\tilde G_{k_{m-1}}$ are determined for $m\geq 2$, we take $k_m$ as the largest $k< k_{m-1}$ so that $\mathcal A_k \neq \emptyset$. We say that $B\in\tilde{\mathcal A}_{k_m}$ if $B\in\mathcal A_{k_{m}}$ and
\[
\sum_{j=1}^{m-1} \tilde G_{k_j}\leq 1
\]
on $B$. Then we define
\[
\tilde G_{k_{m}}(x) = \sum_{B\in\tilde{\mathcal A}_{k_m}} g_B(x).
\]
Since $\mathcal A$ is finite this process ends in a finite number of steps, and we have obtained $\tilde G_{k_1},\cdots,\tilde G_{k_M}$ for certain $M$. We call
\[
\tilde{\mathcal A}=\bigcup_{m=1}^M \tilde{\mathcal A}_{k_m},
\]
and claim that
\begin{equation}\label{claim.reduccion}
\mu\left(\bigcup_{B\in\mathcal A} D_B\right) \leq (1+K_0) \sum_{B\in\tilde{\mathcal A}} \mu(D_B).
\end{equation}

\bigskip

To prove this claim note that if $A\in \mathcal A_{k_m}\setminus \tilde{\mathcal A}_{k_m}$, then there exists $z\in A$ such that
\[
\sum_{j=1}^{m-1} \tilde G_{k_j}(z)>1.
\]
Note that if $A\cap \tilde B\neq \emptyset$ for some $B\in \tilde{\mathcal A}_{k_j}$ with $j<m$, then $A\subset B^\ast:=B(x_B,(1+1/n)R_B)$. Thus for all $z\in A$ we have
\[
\sum_{j=1}^{m-1} G_{k_j}^\ast(z) := \sum_{j=1}^{m-1} \sum_{B\in\tilde{\mathcal A}_{k_j}}\frac{\mu(D_B)}{\mu(B)}\chi_{B^\ast}(z)> 1.
\]
Then, by Tchebychev inequality

\begin{eqnarray*}
\mu\left(\bigcup_{B\in\mathcal A\setminus\tilde{\mathcal A}} D_B\right) &\leq& \mu\left(\Bigl\{z\in X: \sum_{j=1}^M G_{k_j}^\ast(z)>1\Bigr\}\right) \\&\leq& \sum_{j=1}^M \int_X G_{k_j}^\ast(z)\,d\mu(z)= \sum_{B\in\tilde{\mathcal A}}^M \frac{\mu(D_B)}{\mu(B)}\mu(B^\ast) \\&\leq& K_0 \sum_{B\in \tilde{\mathcal A}} \mu(D_B).
\end{eqnarray*}

\bigskip

Now that the claim is justified, we only need to prove
\begin{equation}\label{desigualdad.lados}
\sum_{B\in \tilde{\mathcal A}} \mu(D_B) \leq C_1 V_{\lambda,1}^f.
\end{equation}
By the definition of $\tilde B$ we have
\begin{equation}\label{suma.g}
\sum_{B\in \tilde{\mathcal A}} \mu(D_B) \leq \frac1\lambda  \sum_{B\in \tilde{\mathcal A}} \frac1{\mu(B)}\int_{\tilde B}|f|\,d\mu\ \mu(D_B) = \frac1\lambda \int_X |f| \left(\sum_{j=1}^M \tilde G_{k_j}\right)\,d\mu.
\end{equation}

\bigskip

For each $j=1,\cdots ,M$ we define $\tilde A_j:= \supp \tilde G_{k_j}$. We now take $A_M=\tilde A_M$ and
\[
A_j=\tilde A_j\setminus \bigcup_{\ell=j+1}^{M}\tilde A_\ell.
\]
If $z\in A_m$ we have $\tilde G_{k_j}(z)=0$ if $j>m$, and then, by the way we selected $\tilde G_{k_m}$
\[
\sum_{j=1}^M \tilde G_{k_j}(z) = \tilde G_{k_m}(z)+ \sum_{j=1}^{m-1} \tilde G_{k_j}(z) \leq \tilde G_{k_m}(z) + 1.
\]
Then we have that
\[
\sum_{j=1}^M \tilde G_{k_j}\leq \sum_{j=1}^M \tilde G_{k_j}\chi_{A_j}+1,
\]
which combined with (\ref{claim.reduccion}) and (\ref{suma.g}) yields
\begin{equation*}\label{cotas.dos.lados}
\sum_{B\in \tilde{\mathcal A}} \mu(D_B) \leq \frac{1}\lambda\left(\int_X |f|\,d\mu+\sum_{j=1}^M \int_X |f| \chi_{A_j} \tilde G_{k_j}\,d\mu\right).
\end{equation*}
In order to bound the last sum note that
\[
\sum_{j=1}^M \int_X f \chi_{A_j} \tilde G_{k_j}\,d\mu = \sum_{j=1}^M \sum_{B\in\tilde{\mathcal A}_{k_j}} \frac1{\mu(B)}\int_{\tilde B} |f|\chi_{A_j}\,d\mu\ \mu(D_B)
\]
Given $B\in \tilde{\mathcal A}_{k_j}$, we say that $B\in \tilde{\mathcal A}_{k_j}^\ast$ if
\[
\frac1{\mu(B)}\int_{\tilde B} |f|\chi_{A_j}\,d\mu \leq \frac\lambda{2}.
\]
Hence, we have
\[
\frac{1}\lambda\sum_{j=1}^M \sum_{B\in\tilde{\mathcal A}_{k_j}^\ast} \frac1{\mu(B)}\int_{\tilde B} |f|\chi_{A_j}\,d\mu\ \mu(D_B) \leq \frac12 \sum_{B\in \tilde{\mathcal  A}} \mu(D_B)
\]
and this term can be absorbed in the left hand side of (\ref{desigualdad.lados}).

\bigskip

We now consider those balls that do not belong to any of the classes $\tilde{\mathcal A}_{k}^\ast$.
We claim that if $z\in B^0$, then $\tilde B \subset B(z,R_B)\subset B^\ast$. The $n$-micro-doubling condition would imply then that $\mu(B)\geq \mu(B^\ast)/K_0 \geq \mu(B(z,R_B))/K_0$ and consequently if $B\in \tilde{\mathcal A}_{k_j}\setminus \tilde{\mathcal A}_{k_j}^\ast$
\begin{eqnarray*}
\frac{\lambda}{2}&<&\frac1{\mu(B)}\int_{\tilde B} |f|\chi_{A_j}\,d\mu \leq \frac {K_0}{\mu(B(z,R_B))}\int_{B(z,R_B)} |f|\chi_{A_j}\,d\mu \\ &\leq& K_0 M_{k_j} (f\chi_{A_j})(z).
\end{eqnarray*}
Thus, for such $B$,
\begin{equation}\label{bolas.en.nivel}
B^0\subset \left\{x\in X: M_{k_j} (f\chi_{A_j})(x)>\frac\lambda{2K_0}\right\}.
\end{equation}
Therefore, using that for $B\in\mathcal A$ one has
\begin{equation*}
\frac1{\mu(B)}\int_{\tilde B} |f|\chi_{A_j}\,d\mu \leq \frac1{\mu(B)}\int_{B} |f|\chi_{A_j}\,d\mu \leq 2\lambda,
\end{equation*}
we conclude that
\begin{eqnarray*}
\frac{1}\lambda \sum_{j=1}^M \sum_{B\in\tilde{\mathcal A}_{k_j}\setminus \tilde{\mathcal A}_{k_j}^\ast} \!\!\!\!\!\!&\!\!\!\!\!\!\!\!&\!\!\!\!\!\!\!\!\frac1{\mu(B)}\int_{\tilde B} |f|\chi_{A_j}\,d\mu\ \mu(D_B) \\&\leq& 2 \sum_{j=1}^M \sum_{B\in\tilde{\mathcal A}_{k_j}\setminus \tilde{\mathcal A}_{k_j}^\ast} \mu(D_B)\\&\leq& 2
\sum_{j=1}^M \mu\left(\left\{x\in X: M_{k_j} (f\chi_{A_j})(x)>\frac\lambda{2K_0}\right\}\right).
\end{eqnarray*}
This finishes the proof, provided we justify the last claim.

\bigskip

In order to do so, suppose that $y\in\tilde B$, then
\[
|y-z|\leq |y-z_B|+|z_B-z| < R_{\tilde B} + (R_B-R_{\tilde B}) = R_B,
\]
which means that $y\in B(z,R_B)$. Assume now that $y\in B(z,R_B)$
\[
|y-z_B|\leq |y-z|+|z-z_B| < R_B + R_B/n = (1+1/n)R_B,
\]
hence $y\in B^\ast$. The claim is proved.
\end{proof}

\bigskip

\section{Uniform bounds for maximal functions associated with rotation invariant measures in Euclidean spaces}\label{sect.euclidean.case}

\bigskip

In the sequel we will consider $X$ to be the Euclidean space $\real^n$, equipped with the Euclidean distance and a rotation invariant measure with a radial density, so that it can be defined in all dimensions. The goal of this section is to show that, in this setting, uniformly weakly doubling measures have associated maximal operators satisfying dimension free bounds on the spaces $L^p$.

\bigskip

The notation that we will use is the following. By $\mu$ we will always denote a rotation invariant measure with a radial density $w$. That is $d\mu(x)=w(x)\,dx$ and $w(x)=w_0(|x|)$ with $w_0:[0,\infty)\rightarrow[0,\infty)$. Properly speaking, $w$ and $d\mu$ are objects that change with the dimension. In particular, their integrability properties may be different from one Euclidean space to another. By a slight abuse of notation, we keep, however the same symbols to denote them in $\real^n$ for all $n$.

\bigskip

As usual, for a measurable set $E\subset\real^n$, $|E|$ will denote its Lebesgue measure, $d\sigma_{n-1}$ will be the measure on $\mathbb S^{n-1}$ induced by Lebesgue measure on $\real^n$ and $\omega_{n-1}=\sigma_{n-1}(\mathbb S^{n-1})$. Also, for each $x\in\real^n$ and $R>0$, we will denote by $B(x,R)$ the Euclidean ball centered at $x$ with radius $R$. For a ball whose center is the origin we will use the notation $B_R$.

\bigskip

\subsection{Weakly-doubling measures in Euclidean spaces and uniform bounds}\label{sect.medidas.microdoblantes}

\bigskip

As a first and simple example that uniformity in the weak doubling condition implies uniform bounds we present

\bigskip

\begin{teo}\label{teorema.radiales} Let $\mu$ be a rotation invariant measure over $\real^n$ that is absolutely continuous with respect to Lebesgue measure. If $\mu$ is weakly doubling with constant $K_1$ and $f$ is a radial function we have
\[
\mu(\{x\in \real^n: M_\mu f(x)>\lambda\})\leq \frac{2(K_1+1)}\lambda \|f\|_{L^1(\real^n,\mu)}.
\]
\end{teo}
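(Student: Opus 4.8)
The plan is to reduce the problem to a one-dimensional weighted maximal inequality, exploiting the fact that a radial function composed with a rotation-invariant measure effectively lives on the half-line $[0,\infty)$. First I would introduce the push-forward measure $\nu$ on $[0,\infty)$ defined by $d\nu(r) = \omega_{n-1} r^{n-1} w_0(r)\,dr$, so that for any radial function $g(x)=g_0(|x|)$ one has $\int_{\real^n} g\,d\mu = \int_0^\infty g_0(r)\,d\nu(r)$; in particular $\mu(B_R) = \nu([0,R))$. The key observation is that for a radial $f$ and a point $x$ with $|x|=t$, the average $\frac{1}{\mu(B(x,R))}\int_{B(x,R)}|f|\,d\mu$ depends only on $t$ and $R$, so $M_\mu f$ is itself radial and it suffices to control it as a function of $t$.

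Next I would estimate the ball average from above by a one-sided average on the half-line. The ball $B(x,R)$ with $|x|=t$ is contained in the spherical shell $\{y: |t-R| < |y| < t+R\}$ when $R<t$, or in $B_{t+R}$ when $R\geq t$; combined with the weak doubling hypothesis one gets that $\mu(B(x,R))$ is comparable (up to the constant $K_1$) to $\mu(B_{t+R}) - \mu(B_{\max(t-R,0)})$, i.e. to the $\nu$-measure of the interval $(\max(t-R,0), t+R)$. Indeed, weak doubling applied to $B(x,R)$ and a translate of it inside the shell gives $\mu(B(x,R)) \geq \frac{1}{K_1}\,\nu\big((\max(t-R,0),\,t+R)\big)$, while trivially $\int_{B(x,R)}|f|\,d\mu \leq \int_{\{\max(t-R,0) < |y| < t+R\}} |f|\,d\mu = \int_{\max(t-R,0)}^{t+R} |f_0(s)|\,d\nu(s)$. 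Hence
\[
M_\mu f(x) \leq K_1 \sup_{R>0} \frac{1}{\nu(I(t,R))} \int_{I(t,R)} |f_0(s)|\,d\nu(s), \qquad I(t,R) = (\max(t-R,0),\,t+R),
\]
and the right-hand side is at most $K_1$ times the (uncentered) Hardy--Littlewood maximal function $M_\nu f_0(t)$ with respect to the measure $\nu$ on the interval $[0,\infty)$, since every $I(t,R)$ is an interval containing $t$.

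Finally I would invoke the weak $(1,1)$ bound for the uncentered maximal operator on $(\real,\nu)$ — here the relevant space is $([0,\infty),|\cdot|,\nu)$, and since intervals on the line satisfy the Besicovitch covering property with an absolute constant, the Vitali-type argument gives $\nu(\{t: M_\nu f_0(t) > \alpha\}) \leq \frac{2}{\alpha}\|f_0\|_{L^1(\nu)}$ with an \emph{absolute} constant (no dependence on $n$ or on $\nu$). Putting $\alpha = \lambda/K_1$ and translating back via the push-forward,
\[
\mu(\{x: M_\mu f(x) > \lambda\}) = \nu(\{t: M_\mu f_0 (t) > \lambda\}) \leq \nu(\{t: K_1 M_\nu f_0(t) > \lambda\}) \leq \frac{2 K_1}{\lambda}\|f_0\|_{L^1(\nu)} = \frac{2K_1}{\lambda}\|f\|_{L^1(\mu)},
\]
and a small bookkeeping adjustment (keeping track of the constant $1$ coming from the possibility $R\geq t$, or from a slightly lossy covering constant) upgrades $2K_1$ to $2(K_1+1)$. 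The main obstacle I anticipate is the geometric comparison in the middle step: one must verify carefully that $\mu(B(x,R))$ is genuinely comparable to the shell measure $\nu(I(t,R))$ using only weak doubling — the point is that a ball of radius $R$ centered at distance $t$ from the origin can be covered by boundedly many balls of radius $R$ whose centers lie on the sphere of radius $t$ (or fill the shell), so repeated application of weak doubling controls the shell by $\mu(B(x,R))$; getting the constant to be exactly $K_1+1$ rather than something larger requires choosing this covering efficiently, but any absolute-constant version of the argument already yields dimension-free bounds, which is the essential content.
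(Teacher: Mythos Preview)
Your reduction to a one-dimensional maximal inequality is the right overall strategy, and it matches the paper's approach in spirit. However, the geometric comparison in your middle step is genuinely wrong, not just imprecise. You claim that weak doubling gives $\mu(B(x,R)) \geq \frac{1}{K_1}\,\nu(I(t,R))$, where $\nu(I(t,R))$ is the $\mu$-measure of the \emph{entire spherical shell} $\{y : (t-R)_+ < |y| < t+R\}$. This is false already for Lebesgue measure: for $R$ small relative to $t$, the shell has volume $\sim 2R\,\omega_{n-1}t^{n-1}$ while the ball has volume $c_nR^n$, and the ratio blows up (both as $R/t\to 0$ for fixed $n$, and as $n\to\infty$ for fixed $R<t$). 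The annulus simply cannot be covered by boundedly many $R$-balls, so no chain of weak-doubling comparisons will control it. The student's anticipated ``obstacle'' is therefore fatal, not cosmetic.

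The paper circumvents this by \emph{never} comparing with the full shell. Instead it invokes a geometric lemma producing a set $D$ with three properties: $B(x,R)\subset D$; $D$ is contained in the union of $B(x,R)$ and one further ball $B(z,R)$ with $z\in B(x,R)$ (so $\mu(D)\leq (1+K_1)\,\mu(B(x,R))$ by a single use of weak doubling); and, crucially, each ray from the origin meets $\bar D$ in an interval $[a_\theta,b_\theta]$ containing $|x|$. The last property is what fails for $B(x,R)$ itself and is exactly what your argument is missing. With it one integrates in polar coordinates:
\[
\int_D |f|\,d\mu = \int_{\Sigma_D}\int_{a_\theta}^{b_\theta}|f_0(r)|\,v(r)\,dr\,d\sigma(\theta) \leq \tilde M_v f_0(|x|)\int_{\Sigma_D} v([a_\theta,b_\theta])\,d\sigma(\theta) = \tilde M_v f_0(|x|)\,\mu(D),
\]
so the ray-wise weights sum back to $\mu(D)$ rather than to the shell measure. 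This yields $M_\mu f(x)\leq (1+K_1)\,\tilde M_v f_0(|x|)$, and the weak $(1,1)$ constant $2$ for the uncentered one-dimensional maximal function finishes the proof with the stated constant $2(K_1+1)$.
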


\bigskip

Observe that if $K_1$ is uniformly bounded in $n$, the above inequality provides a dimension-free bound for $M_\mu$. This is a generalization of a result presented by M.T. Men\'arguez and the second author in \cite{MenarguezSoria2}. There, it was shown that for all radial $f$ over $\real^n$ and $\lambda>0$ one has
\begin{equation}\label{teo.orejas}
|\{x\in\real^n:Mf(x)>\lambda\}| \leq \frac 4\lambda\|f\|_{L^1(\real^n,dx)}.
\end{equation}
A previous generalization was due to A. Infante, who pointed out (see \cite{AdrianInfante}, \cite{Infante}) that the same is true for $M_\mu$, if $\mu$ is an `increasing' measure. By this we mean that $\mu(B(x,R))\geq \mu(B(y,R))$ whenever $y=a x$ with $a\geq 1$. In \cite{APL} there is an independent approach with results similar to our Theorems \ref{teorema.radiales} and \ref{teo.alphas}. These two results already appeared in \cite{tesis}.

\bigskip

The main result of the section asserts that the uniform weakly-doubling property of a measure $\mu$ ensures uniform bounds in the dimension not only for radial functions but, in fact, over the whole class of functions $L^p(\real^n,d\mu)$, if $p>1$.

\bigskip

\begin{teo}\label{teo.wdoublin.unif} If there is a positive integer $N$ so that $\mu$ is uniformly weakly-doubling in $\real^n$ for  $n\geq N$, then for each $p>1$ there exists $C_p$ only depending on $\mu$ and $p$ so that
\[
\|M_\mu f\|_{L^p(\real^n,\mu)} \leq C_p \|f\|_{L^p(\real^n,\mu)},
\]
for all $f\in L^p(\real^n,\mu)$.
\end{teo}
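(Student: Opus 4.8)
The plan is to combine the structural characterization announced in the Introduction with a splitting of $M_\mu$ into a \emph{local} and a \emph{global} part. First I would invoke the characterization proved in Section~\ref{sect.diferenciacion}: since $\mu$ is uniformly weakly-doubling for $n\geq N$, its radial profile $w_0$ is essentially constant over dyadic annuli, and moreover, testing weak-doubling on two balls of equal radius straddling a dyadic scale, the values of $w_0$ on consecutive dyadic annuli are comparable; hence $w_0(r)\approx w_0(s)$ whenever $\tfrac12\leq r/s\leq2$, with a constant $C_0$ independent of $n$, and, iterating, $w_0(\lambda r)\leq C_0^{\,1+\log_2\lambda}w_0(r)$ for $\lambda\geq1$. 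Splitting $\int_0^R w_0(s)s^{n-1}\,ds$ into the outer part $s\in[R/2,R]$ and the inner part (which is $2^{-n}$-small relative to the outer one) already yields the dimension free comparison $\mu(B_R)\approx w_0(R)\,|B_R|$, which will be convenient later.

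\emph{The local part.} Fix once and for all a small $\delta\in(0,1)$ and let $M_\mu^{\mathrm{loc}}$ be the operator obtained by restricting $M_\mu$ to balls $B(x,R)$ with $R\leq\delta|x|$. Any such ball meets only the two or three dyadic annuli nearest to $|x|$, on which $w_0$ is comparable to the single constant $w_0(|x|)$; hence $d\mu\approx w_0(|x|)\,dx$ on $B(x,R)$ and therefore $M_\mu^{\mathrm{loc}}f(x)\leq C\,Mf(x)$ for all $x\in\real^n$, where $M$ is the Euclidean Hardy--Littlewood maximal operator. Since for $x$ in the annulus $A_j=\{2^j\leq|x|<2^{j+1}\}$ the quantity $M_\mu^{\mathrm{loc}}f(x)$ only involves $f$ on a fixed bounded dilate $\widetilde A_j$ of $A_j$, I would write
\[
\|M_\mu^{\mathrm{loc}}f\|_{L^p(\mu)}^p=\sum_j\int_{A_j}|M_\mu^{\mathrm{loc}}f|^p\,d\mu\leq C\sum_j w_0(2^j)\int_{\real^n}\bigl|M(f\chi_{\widetilde A_j})\bigr|^p\,dm_n ,
\]
apply Stein's theorem (recalled in the Introduction: for the Euclidean metric and $p>1$, $\|M\|_{L^p(m_n)\to L^p(m_n)}$ is bounded independently of $n$), and then use the bounded overlap of $\{\widetilde A_j\}$ and the comparability of $w_0$ on consecutive annuli to collapse the sum back to $\|f\|_{L^p(\mu)}^p$. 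This gives $\|M_\mu^{\mathrm{loc}}f\|_{L^p(\mu)}\leq C_p\|f\|_{L^p(\mu)}$ with $C_p$ independent of $n$.

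\emph{The global part, and the main obstacle.} It remains to bound $M_\mu^{\mathrm{glob}}$, the part of $M_\mu$ coming from balls with $R>\delta|x|$. Running the weak-doubling inequality along $O(1/\delta)$ balls of the \emph{same} radius $R$ marching from $B(x,R)$ to the concentric ball $B_R$ gives $\mu(B(x,R))\approx\mu(B_R)$ with a constant independent of $n$, so the task reduces to estimating $\sup_{R>\delta|x|}\mu(B_R)^{-1}\int_{B(x,R)}|f|\,d\mu$. The crude approach---enlarging the off-centre ball $B(x,R)$ to $B_{|x|+R}$ and then replacing $f$ by its spherical average $f^\#(r)=\omega_{n-1}^{-1}\int_{\mathbb S^{n-1}}|f(r\theta)|\,d\sigma_{n-1}(\theta)$---fails, because $\mu(B_{|x|+R})/\mu(B_R)$ is of size $(1+|x|/R)^{n}$, which is exponentially large in $n$ as soon as $|x|/R$ is bounded below. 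This is exactly where the differentiation through dimensions technique of Section~\ref{sect.diferenciacion} is needed: it lets one compare these off-centre $n$-dimensional averages with \emph{centred} averages in lower dimensions, after which an induction on the dimension closes the estimate with no loss in the constant, and a final use of Jensen's inequality for the probability measure $d\sigma_{n-1}/\omega_{n-1}$ on $\mathbb S^{n-1}$ gives $\|f^\#\|_{L^p(\rho)}\leq\|f\|_{L^p(\mu)}$, where $\rho$ is the push-forward of $\mu$ under $x\mapsto|x|$. This global estimate is the heart of the argument, and it is also where the hypothesis $p>1$ enters essentially: at $p=1$ the assertion is false for general $f$ and survives only for radial $f$ (Theorem~\ref{teorema.radiales}), so one must genuinely exploit the $L^p$ structure---for instance through the one-dimensional Hardy-type operator $\mathcal Th(r)=\sup_{R>r}\mu(B_R)^{-1}\int_{B_R}h\,d\mu$, which is bounded on $L^p(\rho)$ for $p>1$ with a constant depending on $p$ only---rather than any weak-type inequality.
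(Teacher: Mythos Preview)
Your local estimate is fine and mirrors the paper's treatment of the regime $|x|\geq 2R$. The gap is in the global part, specifically in the intermediate range where $|x|$ and $R$ are comparable. Your appeal to ``differentiation through dimensions'' here is a misconception: in the paper that technique (Lemma~\ref{lema.differentiation}) is used \emph{only} to establish the characterization in Theorem~\ref{teo.ecoda.wdoubling}, i.e.\ to show that uniform weak doubling forces $w_0$ to be essentially constant on dyadic annuli. It is a limiting statement as $n\to\infty$ and gives no quantitative control of an off-centre average at a fixed $n$; the phrase ``induction on the dimension closes the estimate with no loss in the constant'' does not correspond to any argument in the paper and, as written, is not a proof. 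Nor can you simply pass from $\int_{B(x,R)}|f|\,d\mu$ to the centred Hardy operator $\mathcal T$ you introduce at the end: that is exactly the step you yourself flag as failing by a factor exponential in $n$.

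What the paper actually does after the characterization is to prove the pointwise inequality (Lemma~\ref{lema.acotacion.teo.power})
\[
M_\mu f(x)\leq C\Bigl(Mf(x)+\frac{1}{w(x)}\,M(fw)(x)+\mathcal H_\mu f(x)\Bigr),
\]
where $\mathcal H_\mu$ is exactly your Hardy-type operator. The three terms come from the three regimes $|x|\geq 2R$, $R/2\leq|x|\leq 2R$, and $|x|\leq R/2$; the middle term $w(x)^{-1}M(fw)(x)$ is what handles precisely the range your argument is missing, and it arises because on such a ball one only knows $\mu(B(x,R))\gtrsim w(x)\,|B(x,R)|$, not that $w$ is comparable to $w(x)$ throughout $B(x,R)$. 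On $L^p(\mu)$ this middle term becomes $\|Mg\|_{L^p(w^{1-p})}$ with $g=fw$, so one needs $w^{1-p}\in A_p$; since $w$ essentially constant on dyadic annuli forces the same for any power of $w$, Lemma~\ref{lema.ecoda.ap} puts both $w$ and $w^{1-p}$ in $A_p(\real^n)$ for $n$ large. The dimension-free constants then come not from Stein's unweighted theorem but from the Duoandikoetxea--Vega result (Theorem~\ref{teo.duoandi.vega}) on radial $A_p$ weights. Your Hardy operator is indeed the third ingredient, bounded on $L^p$ with an absolute constant for $p>1$, but you cannot reduce the whole global part to it without first isolating the weighted maximal piece.
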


\bigskip

In order to prove this result we first obtain the following characterization of radial measures that are uniformly weakly-doubling.

\bigskip

\begin{teo}\label{teo.ecoda.wdoubling} There exists $N\in\mathbb N$ so that $d\mu=w(x)dx$, with $w(x)=w_0(|x|)$, is uniformly weakly-doubling in $\real^n$ for each $n\geq N$ if and only if $w$ is essentially constant on dyadic annuli, i.e. there exists $\beta\geq 1$ so that for all $R>0$ one has
\begin{equation}\label{ecoda}
\esssup_{R\leq |x|\leq 2R} w(x) \leq \beta \essinf_{R\leq |x|\leq 2R} w(x).
\end{equation}
\end{teo}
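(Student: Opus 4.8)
The plan is to prove the two implications of Theorem~\ref{teo.ecoda.wdoubling} separately; both rest on the fact that, in high dimension, the uniform probability measure on a ball $B(x,R)\subset\real^n$ with $|x|=r$ concentrates on the sphere of radius $c:=\sqrt{r^2+R^2}$, made quantitative through the differentiation-through-dimensions results of Section~\ref{sect.diferenciacion}. Three preliminary remarks. First, since changing $w_0$ on a Lebesgue-null set of radii changes $w$ only on an $\real^n$-null set, one may take $\esssup$ and $\essinf$ of $w_0$ over sets of radii with respect to one-dimensional Lebesgue measure, in accordance with (\ref{ecoda}). Second (concentration): if $z$ is uniform on $B(x,R)\subset\real^n$ and $|x|=r$, writing $z=x+R\rho_nU_n$ with $\rho_n\in[0,1]$ of density $n\rho^{n-1}$ and $U_n$ uniform on $\mathbb S^{n-1}$ one has $|z|^2=r^2+2rR\rho_nU_{n,1}+R^2\rho_n^2$, so $\mathbb E\bigl|\,|z|^2-c^2\,\bigr|\le 2rR\,\mathbb E|U_{n,1}|+R^2\,\mathbb E|1-\rho_n^2|\le c^2(n^{-1/2}+2n^{-1})$; hence, by Tchebychev, $\mathbb P(|z|\in[c/2,2c])\ge\tfrac12$ for all $n$ past a dimension-free threshold $N_1$. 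Third, chaining (\ref{ecoda}) through overlapping dyadic annuli gives $\esssup_{[R_1,R_2]}w_0\le C\,\essinf_{[R_1,R_2]}w_0$ with $C$ depending only on $\beta$ and $R_2/R_1$; I call this ($\ast$).

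For ``(\ref{ecoda})$\Rightarrow$uniformly weakly-doubling'', fix $x,y,R$ with $|x-y|<2R$ and put $r=|x|\le s=|y|$, so $s<r+2R$. If $r\ge10R$ then $s<1.2r$ and both balls lie in the annulus $\{0.7r\le|z|\le1.3r\}$, on which by ($\ast$) $w_0$ varies by a factor depending only on $\beta$; this gives $\mu(B(y,R))\le C_\beta\,\mu(B(x,R))$ with no appeal to dimension. If $r<10R$ then also $s<12R$, and I estimate the two measures separately against $[R,2R]$. Setting $a=\sqrt{r^2+R^2}\in[R,11R]$, the concentration remark gives, for $n\ge N_1$, $\mu(B(x,R))\ge\essinf_{[a/2,2a]}w_0\cdot\tfrac12|B_R|\ge c_\beta\,\essinf_{[R,2R]}w_0\,|B_R|$, using ($\ast$) on $[R/2,22R]\supset[a/2,2a]\cup[R,2R]$. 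For the upper bound I split $B(y,R)$ into its part with $|z|\ge R/4$ — of Lebesgue measure $\le|B_R|$, and there $w_0\le C_\beta\esssup_{[R,2R]}w_0$ by ($\ast$) (all radii lie in $[R/4,13R]$) — and the dyadic shells $\{2^{-j-1}(R/4)\le|z|<2^{-j}(R/4)\}$, $j\ge0$, of Lebesgue measure $\le2^{-(j+2)n}|B_R|$ and on which $w_0\le C_\beta\,\beta^{\,j}\esssup_{[R,2R]}w_0$ by ($\ast$); the series $\sum_j(\beta2^{-n})^j$ converges once $2^n>2\beta$, so $\mu(B(y,R))\le C'_\beta\,\esssup_{[R,2R]}w_0\,|B_R|$. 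Dividing, and invoking (\ref{ecoda}) once more to compare $\esssup$ and $\essinf$ over $[R,2R]$, yields $\mu(B(y,R))\le C''_\beta\,\mu(B(x,R))$; the estimate with $x,y$ exchanged follows identically. Thus $\mu$ is weakly-doubling in $\real^n$ for every $n\ge N:=\max(N_1,\lceil\log_2(2\beta)\rceil)$, with constant depending only on $\beta$.

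For ``uniformly weakly-doubling$\Rightarrow$(\ref{ecoda})'', let $K_1$ be the uniform constant, valid for all $n\ge N$. The point is that one must not try to compare $w_0$ at two distant radii $t_1,t_2$ using balls of radius $\rho\to0$ — the chaining cost $K_1^{\,|t_1-t_2|/2\rho}$ explodes — but should instead use balls of radius $\rho$ comparable to $R$ and let the dimension localize the information at scale $\sqrt{t^2+\rho^2}$. Fix $R>0$, take $\rho=R$, and write $\phi_n(t)=\mu(B(x,R))/|B_R|$ for $|x|=t$. Since $\sqrt 3\,R<2R$, weak-doubling gives $\phi_n(t_2)\le K_1\phi_n(t_1)$ for all $t_1,t_2\in(0,\sqrt 3\,R]$, i.e.\ $\sup_{(0,\sqrt 3 R]}\phi_n\le K_1\inf_{(0,\sqrt 3 R]}\phi_n$. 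Now let $n\to\infty$: by the differentiation-through-dimensions results of Section~\ref{sect.diferenciacion} (which identify the $n\to\infty$ limit of such ball-averages with the value of $w_0$ at the concentration radius, for a.e.\ such radius), the law of $|x+z|$ with $z$ uniform in $B(0,R)$ is an approximate identity concentrating at $\sqrt{t^2+R^2}$ and $\phi_n(t)\to w_0(\sqrt{t^2+R^2})$ for a.e.\ $t>0$. Passing to the limit in $\phi_n(t_2)\le K_1\phi_n(t_1)$ along a full-measure set of pairs, and using that $t\mapsto\sqrt{t^2+R^2}$ is a $C^1$ diffeomorphism of $(0,\sqrt 3\,R]$ onto $(R,2R]$ (hence transports null sets), gives $w_0(c_2)\le K_1 w_0(c_1)$ for a.e.\ $c_1,c_2\in[R,2R]$. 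Finally, $\mu$ non-degenerate and locally integrable forces $0<\essinf_{[R,2R]}w_0<\infty$, so picking $c_1$ in the full-measure set with $w_0(c_1)\le2\essinf_{[R,2R]}w_0$ gives $\esssup_{[R,2R]}w_0\le2K_1\essinf_{[R,2R]}w_0$, which is (\ref{ecoda}) with $\beta=2K_1$. Since the first implication supplies a concrete admissible $N$ and this argument works for it, the equivalence holds for that $N$.

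The delicate step is the second implication, specifically the differentiation-through-dimensions claim $\phi_n(t)\to w_0(\sqrt{t^2+R^2})$ a.e.: the weights $n u^{n-1}\,du$ concentrate at $u=1$ only at the relative rate $n^{-1/2}$, so recovering the pointwise value of a merely locally integrable $w_0$ requires the Lebesgue-point analysis of Section~\ref{sect.diferenciacion} rather than a soft compactness argument. The remaining work — chiefly the uniform-in-$n$ control of the part of $B(y,R)$ near the origin in the first implication — is routine, handled above via the smallness $2^{-(j+2)n}|B_R|$ of the deep dyadic shells together with ($\ast$).
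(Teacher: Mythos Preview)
Your proof is correct and, for the substantive direction (uniformly weakly-doubling $\Rightarrow$ essentially constant on dyadic annuli), follows the paper's approach: invoke the differentiation-through-dimensions Lemma~\ref{lema.differentiation} to pass from the weak-doubling inequality between $\mu$-measures of balls to a pointwise inequality between values of $w_0$. The paper compares $B_R$ (centered at the origin) with a single off-center ball $B(z^n,R)$, whereas you compare two off-center balls; this is immaterial, though you should say explicitly that by rotational invariance the two balls with $|x_1|=t_1$, $|x_2|=t_2$ may be placed on a common ray from the origin, so that $|x_1-x_2|=|t_1-t_2|<2R$ and they do intersect.

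For the reverse implication the paper writes only ``straightforward and will be omitted''. Your argument for it is correct but more elaborate than needed: the concentration estimate is superfluous for the lower bound on $\mu(B(x,R))$, since already $|B(x,R)\setminus B_{R/2}|\ge(1-2^{-n})|B_R|\ge\tfrac12|B_R|$ places half the Lebesgue mass of $B(x,R)$ at radii in $[R/2,\,r+R]\subset[R/2,11R]$, where $(\ast)$ applies directly. This is precisely the device used throughout the paper (proofs of Lemma~\ref{lema.acotacion.teo.power}, Lemma~\ref{lema.ecoda.hardy}, Theorem~\ref{teo.ecoda.strong}), and it makes the threshold $N$ depend only on $\beta$ through $2^N>\beta$, with no separate absolute $N_1$.
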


\bigskip

The proof of this employs a new method of differentiation through dimensions that is presented in Section \ref{sect.diferenciacion}. With this characterization at hand, instead of proving Theorem \ref{teo.wdoublin.unif} we will prove the equivalent:

\bigskip

\begin{teo}\label{teo.ecoda.unif} With the above notation, let $w$ be essentially constant over dyadic annuli with constant $\beta$. Then, there exists $N$ so that $\mu$ is locally finite in $\real^n$ for $n\geq N$ and, for each $p>1$, there exists a constant $C_p$ only depending on $p$ and $\beta$ so that for  $n\geq N$ one has
\[
\|M_\mu f\|_{L^p(\real^n,\mu)} \leq C_p \| f\|_{L^p(\real^n,\mu)},
\]
for all $f\in L^p(\real^n,\mu)$.
\end{teo}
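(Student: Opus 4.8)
The plan is to reduce the dimension free bound to Stein's dimension free estimate for the Euclidean Hardy--Littlewood maximal operator, the bridge being a dimension free description of the $\mu$-measure of balls; throughout, ``$A\approx_\beta B$'' and ``$A\le C_\beta B$'' indicate that the implicit constants depend only on $\beta$. First I would reduce to a step density: writing $A_k=\{2^k\le|x|<2^{k+1}\}$, $c_k=\esssup_{A_k}w$ and $w^\sharp=\sum_k c_k\chi_{A_k}$, condition (\ref{ecoda}) applied on $[2^k,2^{k+1}]$ and on $[3\cdot 2^{k-1},3\cdot 2^k]$ (which meets both $A_k$ and $A_{k+1}$) gives $\beta^{-2}w^\sharp\le w\le w^\sharp$ and $\beta^{-2}\le c_{k+1}/c_k\le\beta^2$, so $M_\mu f\approx_\beta M_{\mu^\sharp}f$ and $\|\cdot\|_{L^p(\mu)}\approx_\beta\|\cdot\|_{L^p(\mu^\sharp)}$, and we may assume $w=w^\sharp$. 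Since $\mu(A_k)=c_k|B_1|2^{kn}(2^n-1)$ and $c_{k-j}\le\beta^{2j}c_k$, the sum $\sum_{j\ge0}c_{k-j}2^{-jn}$ converges as soon as $2^n>\beta^2$; this is what fixes $N$ (possibly enlarged below), and for $n\ge N$ one gets that $\mu$ is locally finite and $\mu(B_\rho)\approx_\beta c_{m(\rho)}|B_\rho|$, where $2^{m(\rho)}\le\rho<2^{m(\rho)+1}$. The crucial point, again for $n$ large in terms of $\beta$, is the dimension free identity
\[
\mu(B(x,R))\approx_\beta c_{m_*}\,|B(x,R)|,\qquad 2^{m_*}\approx\sqrt{|x|^2+R^2},
\]
which follows from the concentration of the volume of a high dimensional ball $B(x,R)$ in the shell $\{|y|\approx\sqrt{|x|^2+R^2}\}$ about the origin, together with the previous estimate for $\mu(B_\rho)$.

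Next I would split $M_\mu f\le M_\mu^0 f+M_\mu^\infty f$, the supremum in $M_\mu^0$ being over radii $R\le|x|/2$ and in $M_\mu^\infty$ over $R>|x|/2$. When $R\le|x|/2$ and $x\in A_j$, the ball $B(x,R)$ is contained in $A_{j-1}\cup A_j\cup A_{j+1}$, so $\mu(B(x,R))\approx_\beta w(x)|B(x,R)|$ and hence $M_\mu^0 f(x)\le C_\beta\,M\big(f\chi_{A_{j-1}\cup A_j\cup A_{j+1}}\big)(x)$, where $M$ is the Euclidean Hardy--Littlewood maximal operator. By Stein's theorem \cite{Stein1}, $M$ is bounded on $L^p(\real^n,dx)$ by a constant $c_p$ independent of $n$; writing $\|M_\mu^0 f\|_{L^p(\mu)}^p$ as a sum over the annuli $A_j$ and using $\mu(A_j)\approx_\beta c_j|A_j|$ and $c_j\le\beta^2 c_i$ for $|i-j|\le1$, we obtain $\|M_\mu^0 f\|_{L^p(\mu)}\le C_{\beta,p}\|f\|_{L^p(\mu)}$.

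The heart of the matter is $M_\mu^\infty$, because for $R>|x|/2$ the ball reaches into regions of genuinely different density and $\mu$ is not doubling dimension freely there. One checks that, for $x\in A_j$ and $R>|x|/2$ with $B(x,R)$ meeting $A_i$, necessarily $R\ge C^{-1}2^{\max(i,j)}$, and then by the identity above $\mu(B(x,R))\approx_\beta c_{m_*}|B(x,R)|$ with $2^{m_*}\ge C^{-1}2^{\max(i,j)}$. Decomposing $f=\sum_i f\chi_{A_i}$, the contribution of $f\chi_{A_i}$ to $M_\mu^\infty f(x)$ splits into a part where the density weight essentially cancels, which is dominated (via Stein's theorem, as above) by a Hardy--Littlewood average of $f\chi_{A_i}$ at scale $2^i$, and a part controlled by the origin--centered average $Nf(x):=\sup_{\rho\ge|x|}\mu(B_\rho)^{-1}\int_{B_\rho}|f|\,d\mu$ and by annular averages. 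Since $\rho\mapsto\mu(B_\rho)^{-1}\int_{B_\rho}|f|\,d\mu$ lives on the half line and the super-level sets of $Nf$ are balls $B_{\rho^*}$, the operator $N$ obeys the dimension free bound $\mu(\{Nf>\lambda\})\le\lambda^{-1}\|f\|_{L^1(\mu)}$, hence $\|Nf\|_{L^p(\mu)}\le C_p\|f\|_{L^p(\mu)}$ for $p>1$. The remaining inter-annular terms are summed with the help of discrete weighted Hardy inequalities, together with the observation that (once $n\ge N$) the super-level sets inside a given annulus $A_j$ produced by a far annulus have exponentially small $\mu$-measure; the geometric growth $\mu(A_i)/\mu(A_j)\ge(2^n/\beta^2)^{\,i-j}$ and the strict inequality $p>1$ force the resulting series to converge with constants depending only on $\beta$ and $p$. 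Combining the two parts gives $\|M_\mu f\|_{L^p(\mu)}\le C_{\beta,p}\|f\|_{L^p(\mu)}$, which proves the theorem.

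I expect the main obstacle to be precisely the analysis of $M_\mu^\infty$: controlling, with constants free of the dimension, the interaction between an annulus close to the origin and a distant one, where the exponential growth $|B(x,R)|\approx 2^n|B(x,R/2)|$ of Euclidean volumes works directly against dimension--freeness. The mechanism that rescues the argument is the dimension free measure identity, a consequence of concentration of measure, which pins the $\mu$-measure of a ball to the density at its bulk radius $\sqrt{|x|^2+R^2}$; in the unfavourable configurations this forces the large volume in the denominator to defeat the density ratio, and what survives is a weighted Hardy sum whose convergence is ensured by the fast geometric growth of $\mu(A_k)$ in $k$.
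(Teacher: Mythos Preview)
Your approach is viable but genuinely different from the paper's, and your treatment of $M_\mu^\infty$ is too sketchy to stand as a proof as written.

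The paper does not reduce to Stein's unweighted dimension--free bound. Instead it proves the pointwise inequality
\[
M_\mu f(x)\le C_\beta\Bigl(Mf(x)+\frac1{w(x)}M(fw)(x)+\mathcal H_\mu f(x)\Bigr),
\]
where $\mathcal H_\mu$ is exactly your operator $N$. The three cases in this lemma correspond to $|x|\ge 2R$, $R/2\le|x|\le 2R$, and $|x|\le R/2$. The point is that the first two terms require \emph{weighted} $L^p$ bounds for the Euclidean maximal operator, namely $\|Mg\|_{L^p(w)}\le W_1\|g\|_{L^p(w)}$ and $\|Mg\|_{L^p(w^{1-p})}\le W_2\|g\|_{L^p(w^{1-p})}$, with $W_1,W_2$ independent of $n$. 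The paper obtains these by showing $w\in A_p(\real^N)$ for some $N=N(\beta,p)$ and then invoking the theorem of Duoandikoetxea--Vega: if a radial weight lies in $A_p(\real^N)$ then $M$ is bounded on $L^p(\real^n,w)$ with a dimension--free constant for all $n\ge N$. This single citation replaces all of your inter--annular bookkeeping.

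Your route avoids the Duoandikoetxea--Vega black box and uses only Stein's unweighted estimate, at the cost of a direct annular analysis of $M_\mu^\infty$. The ingredients you list (the Hardy operator $N$, the geometric growth $\mu(A_i)/\mu(A_j)\ge (2^n/\beta^2)^{i-j}$, discrete Hardy sums) are indeed the right ones, and the argument can be completed. But two points deserve attention. First, your ``crucial'' identity $\mu(B(x,R))\approx_\beta c_{m_*}|B(x,R)|$ does not need concentration of measure: split $B(x,R)$ into $B_{R/4}$ (whose $\mu$--measure is $\le C_\beta\,4^{-n}w_0(R)|B_R|$ by the analogue of the paper's Lemma~\ref{lema.ecoda.hardy}) and its complement (where $w\approx_\beta w_0(R)$). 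Second, the phrase ``summed with the help of discrete weighted Hardy inequalities'' hides real work: for $x\in A_j$ and $k\ge j+3$ one must use that $M(f\chi_{A_{k-1}\cup A_k\cup A_{k+1}})(x)$ is bounded by a \emph{constant} of size $|B_{2^k}|^{-1}\int_{A_{k-1}\cup A_k\cup A_{k+1}}|f|$ (since any ball centred at $x$ meeting $A_{k-1}$ has radius $\gtrsim 2^k$), and then sum $\sum_{j<k}\mu(A_j)\le C_\beta c_k|B_{2^k}|$; without this the naive bound $\sum_{j\le k}c_j$ diverges. You should make this step explicit.
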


\bigskip

The densities $|x|^\alpha$, $\alpha\in\real$, provide examples of measures with this property. It is easy to see that in each case they are essentially constant over dyadic annuli with constant $2^{|\alpha|}$.

\bigskip

At this point, let us make the simple observation that if $w$ is essentially constant over dyadic annuli, then $w$ can be compared pointwise with a continuous function in $\mathbb R^n\setminus \! \{0\}$. In the sequel, we will assume, as we may, that under this hypothesis, $w$ is indeed continuous with the possible exception at $x=0$.

\bigskip

Theorem \ref{teo.ecoda.unif} will be obtained as a consequence of the following results.

\bigskip

\begin{lema}\label{lema.acotacion.teo.power} Let $\mu$ be locally finite in $\real^n$ and assume that its density $w$ is essentially constant over dyadic intervals with constant $\beta$. Then we have the following pointwise inequality
\[
M_\mu f(x)\leq C\left(Mf(x)+\frac1{w(x)}M(fw)(x)+\mathcal H_\mu f(x)\right),
\]
where $C$ depends only on $\beta$, $M$ is the usual Hardy-Littlewood maximal operator  and $\mathcal H_\mu$ is defined as
\[
\mathcal H_\mu f(x)=\sup_{R\geq |x|} \frac1{\mu(B_R)}\int_{B_R}|f(y)|\,d\mu(y).
\]
\end{lema}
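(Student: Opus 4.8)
The plan is to fix $x$ and $R>0$, bound the single average $A_R:=\mu(B(x,R))^{-1}\int_{B(x,R)}|f|\,d\mu$ by the right--hand side, and then take the supremum over $R$. If $x=0$ there is nothing to do, since $|0|=0$ makes the supremum defining $\mathcal H_\mu f(0)$ run over all radii, so $\mathcal H_\mu f(0)=M_\mu f(0)$. So assume $x\neq0$, and (by the remark preceding this subsection) assume $w$ continuous on $\real^n\setminus\{0\}$. The hypothesis on $w$ will enter only through two elementary facts. First, there is an \emph{absolute} constant $C_0$ such that $\beta^{-C_0}w(z)\le w(y)\le\beta^{C_0}w(z)$ whenever $|y|$ and $|z|$ differ by at most four dyadic scales; this is obtained by chaining the hypothesis $\esssup_{\rho\le|x|\le2\rho}w\le\beta\essinf_{\rho\le|x|\le2\rho}w$ across the $O(1)$ dyadic annuli between $|z|$ and $|y|$, passing from one annulus to the next through an overlapping intermediate annulus and losing a bounded power of $\beta$ at each step. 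Second — and this is what replaces any appeal to doubling or concentration of measure — if $0<\rho_0\le R/2$ then, since $B(x,R)$ has the Lebesgue measure of $B_R$,
\[
\bigl|B(x,R)\setminus B_{\rho_0}\bigr|\ \ge\ |B(x,R)|-|B_{\rho_0}|\ =\ \bigl(1-(\rho_0/R)^n\bigr)|B(x,R)|\ \ge\ \tfrac12\,|B(x,R)|,
\]
so the origin--centered ball $B_{\rho_0}$ occupies at most half of $B(x,R)$.

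\emph{Regime $R<2|x|$.} Write $B(x,R)=U\cup V$ with $U=B(x,R)\cap\{|y|\ge|x|/3\}$ and $V=B(x,R)\cap B_{|x|/3}$; note $V=\emptyset$ when $R\le2|x|/3$ because then $\mathrm{dist}(x,B_{|x|/3})=2|x|/3\ge R$, while when $R>2|x|/3$ one has $|x|/3<R/2$, so the second fact gives $|U|\ge\tfrac12|B(x,R)|$ (trivially so when $V=\emptyset$). Every $y\in U$ satisfies $|x|/3\le|y|<|x|+R<3|x|$, hence $w(y)$ is within the factor $\beta^{C_0}$ of $w(x)$, so
\[
\mu(B(x,R))\ \ge\ \int_U w\ \ge\ \beta^{-C_0}w(x)\,|U|\ \ge\ \tfrac12\beta^{-C_0}w(x)\,|B(x,R)|.
\]
Now split $\int_{B(x,R)}|f|\,d\mu=\int_U|f|\,d\mu+\int_V|f|\,d\mu$: the first term is at most $\beta^{C_0}w(x)\int_{B(x,R)}|f|\,dy$, which divided by $\mu(B(x,R))$ is bounded by $2\beta^{2C_0}Mf(x)$; the second is at most $\int_{B(x,R)}|f|w\,dy\le|B(x,R)|\,M(fw)(x)$, which divided by $\mu(B(x,R))$ is bounded by $2\beta^{C_0}w(x)^{-1}M(fw)(x)$. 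Hence $A_R\le 2\beta^{2C_0}\bigl(Mf(x)+w(x)^{-1}M(fw)(x)\bigr)$ in this regime.

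\emph{Regime $R\ge2|x|$.} Here $B_{R/2}\subseteq B(x,R)$ (if $|y|<R/2$ then $|y-x|<R/2+|x|\le R$) and $R/2\ge|x|$, so $B_{R/2}$ is an admissible ball for $\mathcal H_\mu f(x)$. Write $B(x,R)=U\cup B_{R/2}$ with $U=B(x,R)\setminus B_{R/2}$; then $|U|\ge\tfrac12|B(x,R)|$ by the second fact, and every $y\in U$ has $R/2\le|y|<|x|+R\le\tfrac32R$, so $w(y)$ is within $\beta^{C_0}$ of $w_0(R)$. Exactly as above one gets $\mu(B(x,R))\ge\tfrac12\beta^{-C_0}w_0(R)|B(x,R)|$, whence the $U$--part of $A_R$ is bounded by $2\beta^{2C_0}Mf(x)$, and for the remaining part $\mu(B(x,R))\ge\mu(B_{R/2})$ yields
\[
\frac1{\mu(B(x,R))}\int_{B_{R/2}}|f|\,d\mu\ \le\ \frac1{\mu(B_{R/2})}\int_{B_{R/2}}|f|\,d\mu\ \le\ \mathcal H_\mu f(x).
\]
Thus $A_R\le 2\beta^{2C_0}Mf(x)+\mathcal H_\mu f(x)$ here, and taking the supremum over $R>0$ in both regimes proves the lemma with $C$ a fixed power of $\beta$.

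I expect no serious obstacle. The only genuinely quantitative point, which I would isolate as a short preliminary lemma, is the first fact above: that the hypothesis on $w$ propagates across consecutive dyadic annuli with a uniformly bounded loss, so that radii lying within $O(1)$ dyadic scales of each other carry comparable values of $w$ with a constant depending only on $\beta$. Everything else is a volume count. The one mild subtlety is choosing the cut--off radii ($|x|/3$ when $R<2|x|$, $R/2$ when $R\ge2|x|$) so that \emph{both} of the following hold simultaneously: the annulus containing the outer piece $U$ spans only finitely many dyadic scales (so that $w$ on $U$ is controlled by $w(x)$, resp. $w_0(R)$), and the inner, origin--centered piece occupies at most half the Lebesgue measure of $B(x,R)$ (so that the lower bound $\mu(B(x,R))\gtrsim_\beta w(x)|B(x,R)|$, resp. $\gtrsim_\beta w_0(R)|B(x,R)|$, survives without invoking any concentration or doubling property).
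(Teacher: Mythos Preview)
Your proof is correct and follows essentially the same approach as the paper. Both arguments fix $x$ and $R$, bound $\mu(B(x,R))$ from below by $w(x)|B(x,R)|$ (resp.\ $w_0(R)|B(x,R)|$) using only the annular region where $w$ is comparable to a fixed value, and then split the numerator so that the annular piece is controlled by $Mf$ and the piece near the origin by either $w(x)^{-1}M(fw)(x)$ or $\mathcal H_\mu f(x)$. The only organisational difference is that the paper uses three cases ($|x|\ge 2R$, $R/2\le|x|\le 2R$, $|x|\le R/2$) while you merge the first two into a single regime $R<2|x|$ by cutting off $B_{|x|/3}$ (possibly empty); this is a cosmetic streamlining, not a different idea.
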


\bigskip

This transforms the problem of finding $L^p(\mu)$ bounds for $M_\mu$ to the one of finding them for $\mathcal{H}_\mu$ and for $M$.

\bigskip

\begin{teo}\label{teo.power.lp}
In the same hypotheses of Lemma \ref{lema.acotacion.teo.power}, assume that in addition we have the weighted inequalities
\begin{equation} \label{W}
\begin{array}{rcl}
\|Mf\|_{L^p(w)}&\leq & W_1 \|f\|_{L^p(w)},\\
\|Mf\|_{L^p(w^{1-p})}&\leq & W_2 \|f\|_{L^p(w^{1-p})}.
\end{array}
\end{equation}
Then one has
\[
\|M_\mu f\|_{L^p(\mu)}\leq C \|f\|_{L^p(\mu)},
\]
where the constant $C$ only depends on $\beta$, $W_1$ and $W_2$.
\end{teo}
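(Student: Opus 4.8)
The plan is to use the pointwise estimate of Lemma \ref{lema.acotacion.teo.power} to split the problem into controlling, on $L^p(\mu)$, each of the three operators $f\mapsto Mf$, $f\mapsto w^{-1}M(fw)$ and $f\mapsto\mathcal H_\mu f$ separately. The first two are immediate from the weighted hypotheses (\ref{W}). Since $L^p(\mu)=L^p(w)$, the first line of (\ref{W}) reads $\|Mf\|_{L^p(\mu)}\le W_1\|f\|_{L^p(\mu)}$, while for the second operator one computes
\[
\Big\|\tfrac1w\,M(fw)\Big\|_{L^p(\mu)}^p=\int_{\real^n}\big(M(fw)\big)^p\,w^{1-p}\,dx=\|M(fw)\|_{L^p(w^{1-p})}^p\le W_2^p\,\|fw\|_{L^p(w^{1-p})}^p=W_2^p\,\|f\|_{L^p(\mu)}^p,
\]
by the second line of (\ref{W}) applied to the function $fw$. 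Hence the theorem follows once we prove that $\mathcal H_\mu$ is bounded on $L^p(\mu)$ with a dimension-free constant; note that, unlike the other two terms, this step uses neither $\beta$ nor (\ref{W}), only that $\mu$ is rotation invariant and locally finite.

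To handle $\mathcal H_\mu$ I would pass to polar coordinates. Write $d\mu(x)=w_0(|x|)\,dx$, put $d\rho(r):=\omega_{n-1}\,w_0(r)\,r^{n-1}\,dr$ on $(0,\infty)$, and let $\bar f(r):=\omega_{n-1}^{-1}\int_{\mathbb S^{n-1}}|f(r\theta)|\,d\sigma_{n-1}(\theta)$ be the mean of $|f|$ over the sphere of radius $r$. Then $\mu(B_R)=\rho((0,R))$ and $\int_{B_R}|f|\,d\mu=\int_{(0,R)}\bar f\,d\rho$, so $\mathcal H_\mu f(x)=H\bar f(|x|)$, where
\[
Hu(t):=\sup_{R\ge t}\frac1{\rho((0,R))}\int_{(0,R)}u\,d\rho.
\]
It follows that $\|\mathcal H_\mu f\|_{L^p(\mu)}=\|H\bar f\|_{L^p(\rho)}$, while Jensen's inequality on the sphere gives $\|\bar f\|_{L^p(\rho)}\le\|f\|_{L^p(\mu)}$. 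Therefore it suffices to show that $H$ is bounded on $L^p(\rho)$ with a constant depending only on $p$.

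This last assertion I would obtain by straightening the measure $\rho$. First, (\ref{ecoda}) together with the non-degeneracy of $\mu$ forces $w_0>0$ almost everywhere, so that $\Psi(t):=\rho((0,t))$ is a strictly increasing, absolutely continuous bijection of $(0,\infty)$ onto $(0,L)$, with $L:=\rho((0,\infty))\in(0,\infty]$, that carries $\rho$ onto Lebesgue measure on $(0,L)$. Setting $s=\Psi(t)$, $\sigma=\Psi(R)$ and $v:=u\circ\Psi^{-1}$, the operator $H$ becomes
\[
\widetilde Hv(s)=\sup_{\sigma\ge s}\frac1\sigma\int_0^\sigma v(\tau)\,d\tau.
\]
Now for every $\sigma\ge s$ the interval $(0,\sigma)$ has length $\sigma$ and contains $s$, so $\frac1\sigma\int_0^\sigma v\le\mathcal Mv(s)$, where $\mathcal M$ denotes the uncentered Hardy-Littlewood maximal operator on the line; thus $\widetilde Hv\le\mathcal Mv$ pointwise, and the $L^p$-boundedness of $\mathcal M$ for $p>1$ gives $\|\widetilde Hv\|_{L^p(0,L)}\le C_p\|v\|_{L^p(0,L)}$. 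Undoing the change of variables yields $\|Hu\|_{L^p(\rho)}\le C_p\|u\|_{L^p(\rho)}$, and collecting the three bounds proves the theorem, with $C$ depending only on $\beta$ (through Lemma \ref{lema.acotacion.teo.power}), on $W_1$, $W_2$ and on $p$.

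The genuinely non-routine ingredient is this dimension-free bound for $\mathcal H_\mu$: the key observation is that, after straightening the radial measure, an average of $f$ over an origin-centered ball containing $x$ is dominated by the one-dimensional Hardy-Littlewood maximal function of the spherical average of $f$, so boundedness comes for free from the scalar theory. Everything else is a matter of keeping track of the normalizing constants so that no dimensional factor is introduced; the only technical care required is the justification of the change of variables, which is precisely where the non-degeneracy of $\mu$ and (\ref{ecoda}) enter.
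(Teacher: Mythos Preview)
Your proof is correct. The treatment of the first two operators, $Mf$ and $w^{-1}M(fw)$, is identical to the paper's. The difference lies in how you bound $\mathcal H_\mu$: you pass to polar coordinates, reduce to a one-dimensional Hardy-type operator on the radial measure $\rho$, and then straighten $\rho$ to Lebesgue measure so as to dominate by the uncentered Hardy--Littlewood maximal function on the line. The paper instead works directly in $\real^n$: it observes that $\mathcal H_\mu$ is trivially bounded on $L^\infty(\mu)$ with constant $1$, and proves by a short covering argument (the level set $\{\mathcal H_\mu f>\lambda\}$ is itself an origin-centered ball $B_R$, and monotonicity gives $\mu(B_R)\le\lambda^{-1}\|f\|_{L^1(\mu)}$) that $\mathcal H_\mu$ is of weak type $(1,1)$ with constant $1$; real interpolation then yields the $L^p$ bound with an absolute constant. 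The paper's route is shorter and needs no hypothesis on $w_0$ beyond local finiteness, so it sidesteps the justification of your change of variables (which, as you note, uses that $w_0>0$ a.e.). Your route, on the other hand, makes the reduction to the classical one-dimensional theory completely explicit.
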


\bigskip

Note that the required weighted inequalities are equivalent to $w \in A_p \cap A_{p'}=A_{\min\{p,p'\}}$, where $A_p=A_p(\real^n)$ denotes the usual Muckenhoupt class of weights. The existence of $W_1$ and $W_2$ in (\ref{W}) is guaranteed due to the following

\bigskip

\begin{lema}\label{lema.ecoda.ap}
Let $w$ be essentially constant over dyadic intervals with constant $\beta$. Then for each $p>1$ there exist $N\in\mathbb N$ depending only on $p$ and $\beta$ so that $w\in A_p(\real^n)$ for all $n\geq N$.
\end{lema}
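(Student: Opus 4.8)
The plan is to estimate the Muckenhoupt functional
\[
[w]_{B}:=\left(\frac1{|B|}\int_B w\right)\left(\frac1{|B|}\int_B w^{1-p'}\right)^{p-1},\qquad p'=\frac p{p-1},
\]
directly over an arbitrary Euclidean ball $B=B(x_0,r)\subset\real^n$, and to show that $\sup_B[w]_B$ stays finite — in fact bounded by a constant depending only on $p$ and $\beta$ — once $n\ge N$ for a suitable $N=N(p,\beta)$. Recall that by the standing hypotheses and the non-degeneracy of $\mu$ we may take the profile $w_0$ continuous and strictly positive on $(0,\infty)$, and note $w^{1-p'}=w_0(|x|)^{-1/(p-1)}$.

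The first step is structural. Iterating (\ref{ecoda}) along a chain of consecutive dyadic annuli joining $s$ to $t$, which has length at most $\lceil\log_2(t/s)\rceil$ and on each of which $w_0$ oscillates by at most $\beta$, gives, with $\gamma:=\log_2\beta\ge0$,
\[
\frac{w_0(t)}{w_0(s)}\le\beta\Bigl(\frac ts\Bigr)^{\gamma}\quad\text{and}\quad\frac{w_0(s)}{w_0(t)}\le\beta\Bigl(\frac ts\Bigr)^{\gamma}\qquad\text{whenever }0<s\le t.
\]
The same bounds hold for $w_0^{-1/(p-1)}$, with $\beta$ replaced by $\beta^{1/(p-1)}$ and $\gamma$ by $\gamma/(p-1)$. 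I take $N$ to be the least integer with $N\ge 2\gamma\max\{1,1/(p-1)\}$ (and $N=1$ if $\beta=1$), and fix $n\ge N$; this is exactly the range in which the radial integrals below converge.

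Now split according to $a:=|x_0|$. If $a>2r$ then every $x\in B$ has $|x|\in[a-r,a+r]$, an interval contained in the union of at most two consecutive dyadic annuli, whence $\esssup_B w\le\beta^2\essinf_B w$ and therefore $[w]_B\le(\esssup_B w)/(\essinf_B w)\le\beta^2$. The substantial case is $a\le 2r$, and there I will prove: if $v_0>0$ is continuous on $(0,\infty)$ and satisfies a bound of type (\ref{ecoda}) with constant $\delta\ge1$, then for every $n\ge 2\log_2\delta$ and every ball $B(x_0,r)$ with $|x_0|\le 2r$,
\[
\frac1{|B(x_0,r)|}\int_{B(x_0,r)}v_0(|x|)\,dx\le C(\delta)\,v_0(r),
\]
with $C(\delta)$ depending only on $\delta$ (one can take $C(\delta)=\delta^{1+\log_23}+2\delta^2$). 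To see this, split $B=B(x_0,r)$ into $B\setminus B_{r/2}$ and $B\cap B_{r/2}$. On $B\setminus B_{r/2}$ one has $r/2\le|x|\le a+r\le 3r$, so the structural bounds give $v_0(|x|)\le\delta\,3^{\log_2\delta}v_0(r)$ and this part contributes at most $\delta\,3^{\log_2\delta}v_0(r)$ to the mean. On $B\cap B_{r/2}$ one uses $\int_{B\cap B_{r/2}}v_0(|x|)\,dx\le\int_{B_{r/2}}v_0(|x|)\,dx=\omega_{n-1}\int_0^{r/2}v_0(t)t^{n-1}\,dt$ together with $v_0(t)\le\delta(r/t)^{\log_2\delta}v_0(r)$ for $t\le r$; since $n\ge 2\log_2\delta$ the integral $\int_0^{r/2}t^{n-1-\log_2\delta}\,dt$ converges, and after substituting $\omega_{n-1}r^n=n|B_r|$ this second part contributes at most $\frac n{n-\log_2\delta}\,\delta\,2^{\log_2\delta-n}v_0(r)\le 2\delta^2 v_0(r)$. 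Adding the two pieces proves the estimate. Applying it to $v_0=w_0$ ($\delta=\beta$) and to $v_0=w_0^{-1/(p-1)}$ ($\delta=\beta^{1/(p-1)}$) and multiplying, the factor $w_0(r)$ cancels and $[w]_B\le C(\beta)\,C(\beta^{1/(p-1)})^{p-1}$. Taking the supremum over all balls gives $w\in A_p(\real^n)$ for every $n\ge N$, with $A_p$ constant at most $\max\{\beta^2,\,C(\beta)C(\beta^{1/(p-1)})^{p-1}\}$ — in fact uniformly in $n$, which is more than the statement asks.

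The crux is the near-origin piece in the case $a\le 2r$: a ball close to or containing the origin feels whatever singular behaviour $w$ has at $0$, and a naive comparison of $B(x_0,r)$ with a concentric-at-the-origin ball of comparable radius would cost a factor growing exponentially in $n$. The resolution is that the dimension helps us: the radial integral $\int_0^{r/2}t^{n-1-\gamma}\,dt$ is finite precisely when $n>\gamma$, and then it is only a small multiple of $r^{n-\gamma}$, so in high dimensions the portion of $B$ near the origin carries negligible mass compared with the bulk $|x|\sim r$, where $w$ is comparable to $w_0(r)$. Taking $N$ large is what forces $n>\gamma$, and running the same argument for $w^{1-p'}$ is what forces also $n>\gamma/(p-1)$; hence $N$ depends on $\beta$ (through $\gamma=\log_2\beta$) and on $p$.
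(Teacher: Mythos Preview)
Your proof is correct. Both you and the paper split into the same two cases, and in the far case $|x_0|>2r$ the arguments are identical. In the near case $|x_0|\le 2r$, however, the paper takes the crude route $B(x_0,r)\subset B_{3r}$ and then invokes Lemma~\ref{lema.ecoda.hardy}; this costs a factor $3^{np}$ coming from the volume ratio, so the $A_p$ constant it obtains is $(2\cdot 3^{N})^{p}\beta^{2}$, valid only at the single dimension $N$. The paper then relies on the Duoandikoetxea--Vega theorem (Theorem~\ref{teo.duoandi.vega}) to propagate membership in $A_p$ to all $n\ge N$. Your approach instead splits $B$ into $B\setminus B_{r/2}$ (where $w$ is comparable to $w_0(r)$ with a dimension-free constant) and $B\cap B_{r/2}$ (whose contribution, after the radial-integral estimate, carries the factor $2^{-n}$ that kills the volume loss). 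This finer decomposition---which is in fact the same one the paper uses elsewhere, in the proofs of Lemma~\ref{lema.acotacion.teo.power} and Theorem~\ref{teo.ecoda.strong}---yields an $A_p$ constant depending only on $\beta$ and $p$, uniformly in $n\ge N$, and so proves the lemma in a self-contained way without appealing to Duoandikoetxea--Vega.
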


\bigskip

The constants $W_1$ and $W_2$ can be taken, in fact, independent of the dimension, as the following result by J. Duoandikoetxea and L. Vega, appeared in \cite{DuoandiVega}, shows.

\bigskip

\begin{teo}\label{teo.duoandi.vega} Let $w_0$ be a nonnegative function on $(0,\infty)$, so that $w=w_0(|\cdot|)\in A_p(\real^N)$. Then for all $n\geq N$ one has $w\in A_p(\real^n)$. Moreover,
\[
\|Mf\|_{L^p(\real^n,w)} \leq C \|f\|_{L^p(\real^n,w)},
\]
with a constant $C$ that might depend on $p$ and $w$ but not on $n$.
\end{teo}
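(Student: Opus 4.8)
The plan is to decompose $f$ into dyadic radial shells and to transfer the estimate, shell by shell, to Stein's dimension--free bound for the unweighted maximal operator on $L^p(\real^n)$, the interaction between different shells being controlled by a discrete Schur--type estimate in which the ambient dimension works in our favour precisely because $n\geq N$. Two structural facts underpin everything. First, a radial weight in $A_p(\real^N)$ is essentially constant on dyadic annuli, i.e.\ it satisfies property (\ref{ecoda}) with a constant $\beta$ depending only on $p$ and on the $A_p(\real^N)$ constant of $w$, hence not on $n$; this follows by testing the $A_p(\real^N)$ inequality on thin balls straddling the level sets of $w_0$ and chaining across the annulus. Second, $w\in A_p(\real^N)$ makes both $w$ and $w^{1-p'}$ doubling in $\real^N$, and this gives two--sided control, with $n$--free constants, of the ratios $w_k/w_{k-1}$ between the (essentially constant) values $w_k:=w_0(2^k)$ of $w$ on consecutive dyadic annuli $A_k:=\{2^k\leq|x|<2^{k+1}\}$. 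The membership $w\in A_p(\real^n)$ for $n\geq N$ is then routine --- for far balls the $A_p(\real^n)$ quotient reduces, via polar coordinates and the essential constancy on annuli, to a bounded one--dimensional expression, and for balls centred at the origin it is essentially $1$ --- and in any case it also follows from the maximal estimate by the necessity half of Muckenhoupt's theorem.

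For the maximal estimate write $f=\sum_{j\in\mathbb Z}f_j$ with $f_j=f\chi_{A_j}$, so $Mf\leq\sum_j Mf_j$, and use that the essential constancy of $w$ (and of $w^{1-p'}$) on $A_j$ gives $\int_{A_j}|f_j|^p\,w\approx w_j\int_{A_j}|f_j|^p\,dx$ and $\int_{A_j}w^{1-p'}\approx w_j^{1-p'}|A_j|$, with $n$--free implied constants. Fix an integer $m_0>1/(p-1)$ depending only on $p$. I would bound $\|Mf_j\|_{L^p(w)}$ by splitting $\real^n$ into the near shells $\bigcup_{|k-j|\leq m_0}A_k$ and the far shells $\bigcup_{|k-j|>m_0}A_k$. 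On the near shells $w$ is comparable, over the bounded number $2m_0+1$ of octaves, to the single constant $w_j$, so the contribution there is at most a constant times $w_j\|Mf_j\|_{L^p(\real^n)}^p$, which by Stein's dimension--free unweighted bound (valid for $p>1$ with a constant depending only on $p$) is at most a constant times $w_j\|f_j\|_{L^p(dx)}^p\approx\|f_j\|_{L^p(w)}^p$. On the far shells, if $x\in A_k$ with $|k-j|>m_0$ then $\mathrm{dist}(x,\supp f_j)$ is of order at least $2^{\max(j,k)}$, so any averaging ball meeting $\supp f_j$ has volume of order at least $c_n2^{\max(j,k)n}$ (here $c_n=|B_1|$), whence $Mf_j(x)\leq C\,\|f_j\|_{L^1}/(c_n2^{\max(j,k)n})$; inserting the orders of magnitude of $|A_k|$ and of $\|f_j\|_{L^1}$ (Hölder together with the two estimates above), the $c_n$'s cancel and one obtains, with $C$ independent of $n$,
\[
\int_{A_k}(Mf_j)^p\,w\leq C\,2^{np}\,\frac{w_k}{w_j}\,2^{(j-k)n(p-1)}\,\|f_j\|_{L^p(w)}^p .
\]
Summing the resulting geometric series over $|k-j|>m_0$ with the ratio bounds $w_k/w_j\leq(C_D2^{-N})^{\,|k-j|}$ (and the dual bound for $k<j$), the surviving power of $2^n$ has exponent $n\bigl(p-(m_0+1)(p-1)\bigr)<0$ by the choice of $m_0$; the series converges provided $n\geq N$ is large enough relative to the fixed doubling constant $C_D$, i.e.\ in the range to which the hypothesis refers. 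Hence $\|Mf_j\|_{L^p(w)}\leq C_{p,w}\|f_j\|_{L^p(w)}$ with $C_{p,w}$ independent of $n$.

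It remains to reassemble the pieces without losing a factor equal to the (infinite) number of shells; this bookkeeping is the main obstacle. For $x\in A_k$ one has $Mf(x)\leq\sum_{|k-j|\leq m_0}Mf_j(x)+\sum_{|k-j|>m_0}C\,\|f_j\|_{L^1}/(c_n2^{\max(j,k)n})$. The first sum has at most $2m_0+1$ terms, so after raising to the $p$th power and integrating it is controlled by $\sum_j\|Mf_j\|_{L^p(w)}^p\leq C_{p,w}\sum_j\|f_j\|_{L^p(w)}^p=C_{p,w}\|f\|_{L^p(w)}^p$. The second sum is constant, say equal to $T_k$, on each $A_k$; writing $b_j:=\|f_j\|_{L^p(w)}$ and using $\|f_j\|_{L^1}\leq C\,b_j\,w_j^{-1/p}|A_j|^{1/p'}$ together with $w(A_k)\approx w_k|A_k|$, the desired inequality $\sum_k T_k^p\,w(A_k)\leq C_{p,w}\sum_j b_j^p$ becomes a weighted discrete Hardy inequality whose kernel decays geometrically in $|j-k|$ --- once more because $n\geq N$ forces the volume factors $2^{|k-j|n}$ to beat the weight--ratio factors --- and so is settled by a Schur test with an $n$--free constant. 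Combining the two sums gives $\|Mf\|_{L^p(\real^n,w)}\leq C_{p,w}\|f\|_{L^p(\real^n,w)}$ with $C_{p,w}$ independent of $n$, and $w\in A_p(\real^n)$ for every $n\geq N$ follows as indicated above. The truly delicate point throughout is to keep every constant free of $n$ while summing over infinitely many annuli, which is precisely what makes the hypotheses $p>1$, $n\geq N$ and the use of Stein's dimension--free theorem (rather than any crude covering argument) indispensable.
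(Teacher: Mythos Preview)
The paper does not prove this theorem; it is quoted as a result of Duoandikoetxea and Vega \cite{DuoandiVega} and used as a black box in the proof of Theorem~\ref{teo.ecoda.unif}. So there is no ``paper's own proof'' to compare against, and your proposal must stand or fall on its own.

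It falls at the first step. Your entire argument is built on the claim that a radial weight $w\in A_p(\real^N)$ is automatically essentially constant on dyadic annuli, with a constant $\beta$ depending only on $p$ and $[w]_{A_p(\real^N)}$. This is false. Take $w(x)=\big|\,1-|x|\,\big|^{-\alpha}$ with $0<\alpha<1$: this is a radial $A_p(\real^N)$ weight for every $p>1$ and every $N\ge 1$ (locally it behaves like the distance to a smooth hypersurface raised to a power in the $A_p$ range, and at infinity like $|x|^{-\alpha}$), yet its essential supremum on the annulus $\{1\le |x|\le 2\}$ is infinite. This very weight appears in Section~\ref{sect.ejemplo} of the paper precisely as an example of a doubling measure that is \emph{not} essentially constant on dyadic annuli. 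Consequently your definition of the ``values'' $w_k=w_0(2^k)$, the comparisons $\int_{A_j}|f_j|^p w\approx w_j\int_{A_j}|f_j|^p\,dx$, and the two-sided ratio bounds $w_k/w_j\le (C_D 2^{-N})^{|k-j|}$ all collapse, and every subsequent estimate that invokes them is unjustified.

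There is a secondary gap even if one restricts to weights that \emph{are} essentially constant on dyadic annuli. Your far-shell series ``converges provided $n\ge N$ is large enough relative to the fixed doubling constant $C_D$'', but the theorem asserts the bound for \emph{every} $n\ge N$, where $N$ is the dimension given in the hypothesis, not a threshold you are free to enlarge. You would need a separate argument, with an $n$-free constant, to cover the finitely many dimensions between $N$ and your threshold; standard Muckenhoupt theory gives boundedness there but with a constant that a priori depends on $n$. The actual Duoandikoetxea--Vega proof proceeds quite differently, via spherical averages and a method-of-rotations/slicing argument that transfers the $N$-dimensional weighted bound directly, without ever asserting pointwise control of $w$ on annuli.
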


\bigskip

To complete the proof of Theorem \ref{teo.ecoda.unif}, we have to show that if $w$ is essentially constant over dyadic annuli, then there exists $N\in\mathbb N$ so that $w$ is locally integrable in $\real^n$ for $n>N$. This is immediate once we have the following auxiliary lemma, which will be used  in different proofs throughout the paper.

\bigskip

\begin{lema}\label{lema.ecoda.hardy} Let $w$ be essentially constant over dyadic intervals with constant $\beta$. Then there exists $N\in\mathbb N$ only depending on $\beta$, so that for $n\geq N$ one has in $\real^n$ the control
\begin{equation}\label{hardy.aveces}
\mu(B_R)=\int_{B_R}w(x)\,dx \leq 2\beta\ w_0(R)|B_R|.
\end{equation}
\end{lema}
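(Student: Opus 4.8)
The plan is to reduce (\ref{hardy.aveces}) to a one–dimensional radial estimate and then sum a geometric series coming from the dyadic hypothesis (\ref{ecoda}). Passing to polar coordinates, $\mu(B_R)=\omega_{n-1}\int_0^R w_0(r)\,r^{n-1}\,dr$ while $|B_R|=\omega_{n-1}R^n/n$, so the assertion is equivalent to
\[
\int_0^R w_0(r)\,r^{n-1}\,dr\le\frac{2\beta}{n}\,w_0(R)\,R^n .
\]
Here I use that, as remarked just before the lemma, $w$ may be taken continuous off the origin, so $w_0$ is an honest function on $(0,\infty)$ and pointwise values such as $w_0(R2^{-j})$ make sense.

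First I would record the pointwise consequence of (\ref{ecoda}) that controls $w_0$ at small radii. Applying (\ref{ecoda}) with $R$ replaced by $R2^{-j}$, and noting that both endpoints $R2^{-j}$ and $R2^{-j+1}$ lie in the annulus $R2^{-j}\le|x|\le R2^{-j+1}$, one gets $w_0(R2^{-j})\le\esssup w\le\beta\,\essinf w\le\beta\,w_0(R2^{-j+1})$; iterating in $j$ yields $w_0(R2^{-j})\le\beta^{j}w_0(R)$. The same reasoning on the annulus $R2^{-j-1}\le|x|\le R2^{-j}$ gives $w_0(r)\le\beta\,w_0(R2^{-j})\le\beta^{j+1}w_0(R)$ for every $r\in[R2^{-j-1},R2^{-j}]$.

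Then I would split $(0,R]=\bigcup_{j\ge0}[R2^{-j-1},R2^{-j}]$, bound $w_0$ on each piece by $\beta^{j+1}w_0(R)$, and use $\int_{R2^{-j-1}}^{R2^{-j}}r^{n-1}\,dr\le R^n/(n\,2^{jn})$ to obtain
\[
\int_0^R w_0(r)\,r^{n-1}\,dr\le\frac{\beta\,w_0(R)\,R^n}{n}\sum_{j\ge0}\bigl(\beta\,2^{-n}\bigr)^{j}.
\]
Choosing $N:=\lceil 1+\log_2\beta\rceil$ guarantees $\beta\,2^{-n}\le\tfrac12$ for all $n\ge N$, so the geometric series is at most $2$; this gives exactly $\tfrac{2\beta}{n}w_0(R)R^n$, and multiplying back by $\omega_{n-1}$ yields (\ref{hardy.aveces}). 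Note that $N$ depends only on $\beta$.

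I do not expect a genuine obstacle here. The two points that need a little care are: (i) making the chaining of (\ref{ecoda}) across the overlapping dyadic annuli $[R2^{-j},R2^{-j+1}]$ rigorous — this is exactly where the (assumed) continuity of $w_0$, or failing that a comparison of the ess-sup and ess-inf on adjacent annuli, is used; and (ii) checking that the threshold $N$ forcing the ratio $\beta\,2^{-n}$ strictly below $1$ depends on $\beta$ alone and not on $R$, $n$, or the particular density, which is clear from the computation above.
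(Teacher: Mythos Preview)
Your argument is correct and essentially identical to the paper's own proof: both pass to polar coordinates, decompose $(0,R]$ into the dyadic intervals $[R2^{-j-1},R2^{-j}]$, bound $w_0$ there by $\beta^{j+1}w_0(R)$ via iterated use of (\ref{ecoda}), and sum the resulting geometric series with ratio $\beta/2^n$. Your threshold $N=\lceil 1+\log_2\beta\rceil$ is exactly the paper's choice, since the paper's condition $1/(1-\beta/2^N)\le 2$ is equivalent to $\beta\,2^{-N}\le\tfrac12$.
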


\bigskip

In the remaining part of the section we present the proofs of Theorem \ref{teo.power.lp}, Lemmas \ref{lema.acotacion.teo.power}, \ref{lema.ecoda.ap} and \ref{lema.ecoda.hardy} and Theorem \ref{teorema.radiales} in this given order.

\bigskip

We start showing how Lemma \ref{lema.acotacion.teo.power} implies Theorem \ref{teo.power.lp}.

\bigskip

\begin{proof}[Proof of Theorem \ref{teo.power.lp}.]
By Lemma \ref{lema.acotacion.teo.power} one has
\[
\|M_\mu f\|_{L^p(\mu)} \leq C\left(\|Mf\|_{L^p(\mu)}+\left\|\frac{M(fw)}{w}\right\|_{L^p(\mu)} + \|\mathcal H_\mu f\|_{L^p(\mu)}\right),
\]
where the constant $C$  depends only on $\beta$.

\bigskip

By assumption we have
\[
\|Mf\|_{L^p(\mu)}\leq W_1 \|f\|_{L^p(\mu)}.
\]

\bigskip

We also would like to have
\[
\left\|\frac1{w}M(fw)\right\|_{L^p(\mu)} \leq C\|f\|_{L^p(\mu)}.
\]
Taking $g=fw$ this is equivalent to
\[
\|Mg\|_{L^p(w^{1-p})}\leq C \|g\|_{L^p(w^{1-p})},
\]
which we know true by assumption with $C=W_2$.

\bigskip

For $\mathcal H_\mu$ we use the standard argument for Hardy type operators. It is obvious that $\mathcal H_\mu$ is bounded on $L^\infty(\mu)$ with constant 1. We will show that it is also weakly bounded on $L^1(\mu)$ with operator norm 1. Then by real interpolation it is bounded on $L^p(\mu)$ with operator norm controlled by an absolute constant.

\bigskip

To see the weak type inequality take $\lambda>0$ and consider $E_\lambda=\{x\in\real^n: \mathcal H_\mu f(x)\geq \lambda\}$. If $x\in E_\lambda$ there exists $R_x>|x|$ so that
\[
\frac1{\mu(B_{R_x})}\int_{B_{R_x}} |f(y)|\,d\mu(y)\geq \lambda.
\]
Note that then $B_{R_x}\subset E_\lambda$, and that
\[
E_\lambda=\bigcup_{x\in E_\lambda} B_{R_x}.
\]
Then $E_\lambda = B_R$ for certain $R>0$ and monotonicity gives
\begin{eqnarray*}
\mu(E_\lambda)&=&\mu(B_R) = \sup_{x\in E_\lambda} \mu(B_{R_x}) \leq \sup_{x\in E_\lambda} \frac1\lambda \int_{B_{R_x}} |f(y)|\,d\mu(y) \\&\leq& \frac1\lambda \int_{\real^n} |f(y)|\,d\mu(y).
\end{eqnarray*}
\end{proof}

\bigskip

\begin{proof}[Proof of Lemma \ref{lema.acotacion.teo.power}.]
We will bound the mean value over an arbitrary ball $B(x,R)$. Fixing $x$ and $R$, we consider different cases.

\bigskip

If $|x|\geq 2R$ and $y\in B(x,R)$ then
\[
\frac12 |x|\leq |x|-R\leq |y|\leq |x|+R\leq \frac32|x|.
\]
Since $w$ is essentially constant over dyadic annuli, we have $\beta^{-1} w(x) \leq w(y)\leq \beta w(x)$. Hence
\begin{eqnarray*}
\frac1{\mu(B(x,R))}\int_{B(x,R)}|f(y)|\,w(y)\,dy  &\leq& \frac{\beta^2}{w(x)\,|B(x,R)|}\int_{B(x,R)} |f(y)|\,w(x)\,dy\\&\leq& \beta^2 Mf(x).
\end{eqnarray*}

\bigskip

In the case that $R/2\leq |x|\leq 2R$ one has that if $y\in B(x,R)\setminus B_{R/2}$, then $|x|/4 \leq |y| \leq 3|x|$, which implies $\beta^{-2}w(|x|)\leq w(y) \leq \beta^2 w(|x|)$. Hence,
\begin{eqnarray*}
\mu(B(x,R)) &\geq& \mu(B(x,R)\setminus B_{R/2}) \geq \frac1{\beta^{2}}\ w(x)\ |B(x,R)\setminus B_{R/2}| \\ &\geq& \frac1{2\beta^{2}}\ w(x)\ |B(x,R)|.
\end{eqnarray*}
Therefore we have
\begin{eqnarray*}
\frac1{\mu(B(x,R))}\int_{B(x,R)}|f(y)|\,w(y)\,dy&\leq&  \frac{2\beta^2}{w(x)|B(x,R)|} \int_{B(x,R)} |f(y)|\,w(y)\,dy \\&\leq& \frac{2\beta^2}{w(x)} M(fw)(x).
\end{eqnarray*}

\bigskip

Last, we consider that $|x|\leq R/2$.
We split the ball $B(x,R)$ into two disjoint pieces $B_{R/2}$ and $B(x,R)\setminus B_{R/2}$ and integrate over them separately. For the first one
\begin{equation*}
\frac1{\mu(B(x,R))}\int_{B_{R/2}} |f(y)|\,d\mu(y) \leq \frac{1}{\mu(B_{R/2})}\int_{B_{R/2}} |f(y)|\,d\mu(y) \leq \mathcal H_\mu f(x).
\end{equation*}
For the second one note that if $y\in B(x,R)\setminus B_{R/2}$ then $R/2\leq |y|\leq 3R$ and then $\beta^{-2} w_0(R)\leq w(y)\leq\beta^2 w_0(R)$. This implies that
\begin{eqnarray*}
\mu(B(x,R)) &\geq& \mu(B(x,R)\setminus B_{R/2}) \geq \frac1{\beta^{2}}\ w_0(R)\ |B(x,R)\setminus B_{R/2}| \\&\geq& \frac1{2\beta^{2}}\ w_0(R)\ |B(x,R)|,
\end{eqnarray*}
and then we get
\begin{eqnarray*}
\frac1{\mu(B(x,R))}&& \!\!\!\!\!\!\!\!\!\!\!\!\!\int_{B(x,R)\setminus B_{R/2}} |f(y)|\,d\mu(y) \\&\leq& \frac{2\beta^4}{w_0(R)\ |B(x,R)|} \int_{B(x,R)\setminus B_{R/2}} |f(y)|\,w_0(R)\,dy \\&\leq& 2\beta^4 Mf(x).
\end{eqnarray*}
\end{proof}

\bigskip

\begin{xrem} Both, Theorem \ref{teo.power.lp} and Lemma \ref{lema.acotacion.teo.power} do not require $\mu$ to be radial. For the applications, however, this requirement is the most natural in order to define $\mu$ and $M_\mu$ simultaneously in all dimensions and to study whether or not there are uniform bounds as $n\longrightarrow\infty$.
\end{xrem}

\bigskip

\begin{proof}[Proof of Lemma \ref{lema.ecoda.ap}]
We have to prove that $w\in A_p(\real^N)$ for some $N\in\mathbb N$. That is, there exists a constant $C>0$ so that for all $x\in\real^N$ and $R>0$ one has
\[
\fint_{B(x,R)} w(y)\,dy \left(\fint_{B(x,R)} w(y)^{1/(1-p)}\,dy\right)^{p-1}\leq C.
\]
Observe that $w^{1/(1-p)}$ is also constant over dyadic annuli. This is easy because
\begin{eqnarray*}
\sup_{R\leq |x|\leq 2R} w(x)^{1/(1-p)}&=& \left(\inf_{R\leq |x|\leq 2R} w(x)\right)^{1/(1-p)} \\&\leq& \left(\beta^{-1}\ \sup_{R\leq |x|\leq 2R} w(x)\right)^{1/(1-p)} \\&=&\beta^{1/(p-1)} \inf_{R\leq |x|\leq 2R} w(x)^{1/(1-p)}.
\end{eqnarray*}
Then we can choose $N$ so that (\ref{hardy.aveces}) holds for $w$ and $w^{1/(1-p)}$ in $\real^N$.
If $|x|\leq 2R$, one has $B(x,R)\subset B_{3R}$ and then
\begin{eqnarray*}
\fint_{B(x,R)} w(x)\,dx && \!\!\!\!\!\!\!\!\!\!\!\!\!\left(\fint_{B(x,R)} w(x)^{1/(1-p)}\,dx\right)^{p-1} \\&\leq& 3^{pn} \fint_{B_{3R}} w(x)\,dx \left(\fint_{B_{3R}}  w(x)^{1/(1-p)}\,dx\right)^{p-1}
\\ &\leq& 3^{pn} 2\beta\ w_0(3R) \left(2\beta^{1/(p-1)}\ w_0(3R)^{1/(1-p)}\right)^{p-1}\\&\leq& (2\,3^{n})^p\beta^2.
\end{eqnarray*}
Assume conversely that $|x|> 2R$. If $y\in B(x,R)$, then $|x|
/2 \leq |y| \leq 3|x|/2$ and consequently
\[
\beta^{-1}w(x)\leq w(y) \leq \beta w(x).
\]
Hence,
\begin{eqnarray*}
\fint_{B(x,R)} w(y)\,dy && \!\!\!\!\!\!\!\!\!\!\!\!\!\left(\fint_{B(x,R)} w(y)^{1/(1-p)}\,dy\right)^{p-1} \\&\leq& \beta w(x) \left((\beta^{-1} w(x)\,)^{1/(1-p)}\right)^{p-1}\leq \beta^2.
\end{eqnarray*}
\end{proof}

\bigskip

\begin{proof}[Proof of Lemma \ref{lema.ecoda.hardy}]
Assuming that we are in $\real^N$ with $2^N>\beta$ we have
\begin{eqnarray*}
\mu(B_R) &=& \omega_{N-1}\int_0^R w_0(t) t^{N-1}\,dt= \omega_{N-1} \sum_{j=0}^{\infty} \int_{2^{-j-1}R}^{2^{-j}R} w(t)t^{N-1}\,dt\\ &\leq& \omega_{N-1} \sum_{j=0}^{\infty} \int_{2^{-j-1}R}^{2^{-j}R} \beta^{j+1} w_0(R)\, t^{N-1}\,dt \\&\leq&
\beta\ w_0(R)\frac{\omega_{N-1}R^N}N \sum_{j=0}^{\infty} \left(\frac\beta{2^N}\right)^{j}
= \frac{\beta}{1-\beta/2^{N}}\ w_0(R)\ |B_R|.
\end{eqnarray*}
Taking $N$ big enough one has that $1/(1-\beta/2^N)\leq 2$, as stated.
\end{proof}

\bigskip

\bigskip

For the proof of Theorem \ref{teorema.radiales}, we will follow \cite{MenarguezSoria2}. First, associated with a weight $v$ on $\real$ we define the non-centered maximal function
\begin{equation}\label{maximal.no.centrado}
\tilde M_{v}F(x)=\sup_{a\leq x\leq b}\frac1{v([a,b])}\int_{[a,b]}|F(t)|v(t)\,dt,
\end{equation}
for each $F$ locally integrable  with respect to $v$. Then, using a simple covering argument for intervals in $\real$ one has the estimate
\begin{equation}\label{desigualdad.unidimensional}
v(\{r\geq0\,:\,\tilde M_v F(r)>\lambda\})\leq \frac2\lambda \int_\real |F(r)|v(r)\,dr,
\end{equation}
(see \cite{MuckenhouptStein}, \cite{Garsia} or \cite{MenarguezTesis}).

\bigskip

We make now the following definition. Given a measurable set $E\in\real^n$ we define its projection onto the sphere $\mathbb S^{n-1}$ by
\[
\Sigma_E = \{\theta\in\mathbb S^{n-1}\ :\ r\theta \in E\ \mbox{for some}\ r>0\}.
\]
The following geometrical result can be found in \cite{MenarguezSoria2}.

\bigskip

\begin{lema}\label{lema.conjunto.radial}
For each ball $B(x,R)\in\real^n$ there exists a set $D$ such that
\begin{enumerate}[(a)]
\item\label{cond1} $B(x,R)\subset D$,
\item\label{cond2} $\Sigma_D=\Sigma_{B(x,R)}$,
\item\label{cond3} for each $\theta\in\Sigma_D$ there exist $0\leq a_\theta\leq b_\theta$ such that $r\theta\in \bar{D}$ if and only if $a_\theta\leq r\leq b_\theta$,
\item\label{cond4}$|x|\theta\in \bar{D}$ for each $\theta\in\sigma_D$ (this means $a_\theta\leq|x|\leq b_\theta$),
\item\label{cond5} $D$ is contained in the union of $B(x,R)$ and another ball $B(z,R)$ with $z\in B(x,R)$.
\end{enumerate}
\end{lema}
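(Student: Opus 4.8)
The plan is to build $D$ explicitly as a union of radial segments, one for each direction $\theta$ that meets the ball $B(x,R)$, choosing the segment in direction $\theta$ to be exactly the set of radii $r$ for which $r\theta$ lands in $B(x,R)\cup B(2x-\text{(something)},R)$, so that conditions \eqref{cond3} and \eqref{cond4} become automatic. Concretely, write $\rho=|x|$ and, assuming $x\neq 0$ (the case $x=0$ is trivial, taking $D=B_R$), put $x'=x/\rho$. For each $\theta\in\Sigma_{B(x,R)}$, the set $\{r>0: r\theta\in B(x,R)\}$ is an open interval $(\alpha_\theta,\beta_\theta)$ coming from the quadratic inequality $r^2-2r\rho\langle\theta,x'\rangle+\rho^2<R^2$; I would then \emph{define}
\[
a_\theta=\min(\alpha_\theta,\rho),\qquad b_\theta=\max(\beta_\theta,\rho),
\]
and set $D=\{r\theta: \theta\in\Sigma_{B(x,R)},\ a_\theta<r<b_\theta\}$ (one may work with the open version and pass to $\bar D$ at the end). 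With this definition \eqref{cond1}, \eqref{cond3} and \eqref{cond4} hold by construction, and \eqref{cond2} holds because we only added points in directions already present in $\Sigma_{B(x,R)}$.

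The substantive point is \eqref{cond5}: the segment in direction $\theta$ may stick out beyond $B(x,R)$ only on the side toward the origin (when $\alpha_\theta>\rho$, i.e. the whole chord lies past $x$) or on the far side (when $\beta_\theta<\rho$, i.e. the whole chord lies before $x$); in either case we have extended the chord to include the radius $r=\rho$, hence to include the point $\rho\theta$. So $D\setminus B(x,R)$ consists of points $r\theta$ with $r$ between $\rho$ and one endpoint of the original chord $(\alpha_\theta,\beta_\theta)$, and for such $r$ we have $|r\theta-\rho\theta|=|r-\rho|\le$ the chord length $\le 2R$ — but we need the sharper fact that these added points lie in a \emph{single} ball of radius $R$ centered at some $z\in B(x,R)$. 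The natural candidate is $z=\rho\,x'=x$ itself when... no: one must be more careful, and the right choice is to reflect. I would argue that the extra points in the "near" case ($\alpha_\theta>\rho$) all lie in $B(z,R)$ with $z$ the reflection of $x$ through the hyperplane $\{\langle\,\cdot\,,x'\rangle=\rho\langle\theta,x'\rangle\}$... this gets direction-dependent, so instead the cleanest route is: the added segment in direction $\theta$ has one endpoint at $\rho\theta$ and the other at a point of $\partial B(x,R)$, so it is a chord-extension of length at most $2R$ \emph{emanating from} a fixed point, and one checks directly that $D\subset B(x,R)\cup B(z,R)$ with $z$ the point of $B(x,R)$ diametrically situated with respect to the origin, namely the unique $z\in\overline{B(x,R)}$ on the ray through $x$ at distance... — here I would simply cite \cite{MenarguezSoria2}, where this geometric lemma is proved, and reproduce their choice of $z$ and the short verification.

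The main obstacle is exactly pinning down the center $z$ in \eqref{cond5} and checking $D\subset B(x,R)\cup B(z,R)$ cleanly: the construction of $D$ and properties \eqref{cond1}--\eqref{cond4} are formal, but \eqref{cond5} requires a genuine (if elementary) Euclidean-geometry computation showing that the union of all the "origin-ward" chord extensions, over all directions $\theta\in\Sigma_{B(x,R)}$, is covered by one more ball of the same radius $R$. Since this is precisely the statement attributed to \cite{MenarguezSoria2} in the text, I would present the construction above and then invoke that reference for the verification of \eqref{cond5}, rather than redo the case analysis here.
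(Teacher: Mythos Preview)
The paper does not supply its own proof of this lemma: it is stated with the sentence ``The following geometrical result can be found in \cite{MenarguezSoria2}'' and then used as a black box. So there is no in-paper argument to compare your proposal against; your decision to cite \cite{MenarguezSoria2} for the verification of \eqref{cond5} is exactly what the authors themselves do.

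That said, a brief comment on your sketch. Your explicit construction of $D$ via $a_\theta=\min(\alpha_\theta,\rho)$, $b_\theta=\max(\beta_\theta,\rho)$ is the right one, and \eqref{cond1}--\eqref{cond4} do follow formally as you say. Note, incidentally, that when $\rho>R$ the product of the roots $\alpha_\theta\beta_\theta=\rho^2-R^2<\rho^2$ forces $\alpha_\theta<\rho$, so the extension is only ever ``outward'' (i.e.\ $b_\theta=\rho>\beta_\theta$), which already simplifies the geometry of \eqref{cond5}. Your hesitation over the choice of $z$ is the only real gap; the reflected point along the ray through $x$ does the job, and the computation in \cite{MenarguezSoria2} is short. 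If you want a self-contained write-up, it would be worth carrying that verification through rather than leaving the paragraph oscillating between candidate centers.
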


\bigskip

Using this, we prove Theorem \ref{teorema.radiales}.

\begin{proof}[Proof of Theorem \ref{teorema.radiales}.]
By conditions \emph{(\ref{cond1})} and \emph{(\ref{cond5})} and the hypothesis that $\mu$ is weak\-ly doubling, we have
\begin{eqnarray*}
\frac1{\mu(B(x,R))}\int_{B(x,R)} |f(y)|\,d\mu(y) &\leq& \frac{\mu(D)}{\mu(B(x,R))} \frac1{\mu(D)} \int_D |f(y)|\,d\mu(y) \\ &\leq& \frac{1+K_1}{\mu(D)} \int_D |f(y)|\,d\mu(y).
\end{eqnarray*}
Assume that $f(x)=f_0(|x|)$ and set $v(t)=w_0(t)t^{n-1}$. Now we integrate along each ray coming from the origin and use conditions \emph{(\ref{cond2})}--\emph{(\ref{cond4})}
\begin{eqnarray*}
\int_D |f(y)|\,d\mu(y) &=& \int_{\Sigma_D}\int_{a_\theta}^{b_\theta} |f_0(t)|v(t)\,dt\,d\sigma_n(\theta)\\&=& \int_{\Sigma_D}\frac{v([a_\theta,b_\theta])}{v([a_\theta,b_\theta])}\int_{a_\theta}^{b_\theta} |f_0(t)|v(t)\,dt\,d\sigma_n(\theta)\\&\leq& \int_{\Sigma_D} v([a_\theta,b_\theta])\,\tilde M_{v}f_0(|x|)\,d\sigma_n(\theta).
\end{eqnarray*}
Note that
\[
\int_{\Sigma_D} v([a_\theta,b_\theta])\, d\sigma_n(\theta) = \int_{\Sigma_D} \int_{a_\theta}^{b_\theta} v(t)\,dt\, d\sigma_n(\theta)=\mu(D),
\]
Hence we have proved that
\[
\frac1{\mu(B(x,R))}\int_{B(x,R)} |f(y)|\,d\mu(y) \leq (1+K_1)\,\tilde M_{v}f_0(|x|),
\]
and, since $R$ is arbitrary, $M_\mu f(x)\leq (1+K_1)\tilde M_v f_0(|x|)$. Integrating in polar coordinates we have
\[
\mu\left(\{x\in\real^n\ :\ Mf(x)>\lambda\}\right) \leq  \omega_{n-1}\,v\left(\left\{r\geq0\,:\,\tilde M_v f_0(r)>\frac{\lambda}{1+K_1}\right\}\right).
\]
By (\ref{desigualdad.unidimensional}) the latter term is bounded by
\[
\frac{2(1+K_1)}\lambda\,\omega_{n-1}\int_0^\infty |f_0(t)|\,v(t)\,dt = \frac{2(K_1+1)}\lambda\int_{\real^n}|f(x)|\,d\mu(x).
\]
\end{proof}

\bigskip

\subsection{Differentiation through dimensions}\label{sect.diferenciacion}

\bigskip

In this section we introduce a new technique that will be useful in settling several questions mentioned in this work, among them, the proof of Theorem \ref{teo.ecoda.wdoubling}. It starts with the following observation: take, in each Euclidean space $\mathbb R^n$,  a ball $B^n$ of a fixed radius $R$ and with center at a fixed distance $s$ from the origin. If $w_0 \in L^1_{\loc}([0,\infty), t^{N-1}dt)$, for some $N\ge 1$,  and $m_n$ denotes Lebesgue measure in $\mathbb R^n$, then the limits
$$
\lim_{n\rightarrow \infty}   \fint_{B^n} w_0(|x|)\, dm_n(x)
$$
exist a.e. depending on $R$ and $s$. Observe that our integrability condition on $w_0$ ensures that the function $w(x)=w_0(|x|)$ is locally integrable in each $\real^n $ whenever $n\ge N$. Let us remark at this point  that for balls $B^n$ of the same radius one always has $\lim_{n\rightarrow \infty} m_n(B^n)=0$. This is not however the reason for this phenomenon that we will refer to as \lq\lq differentiation through dimensions". The precise statement of this type of differentiation is contained in the following result.

\bigskip

\begin{lema}\label{lema.differentiation}
Take $w_0\in L_{\loc}^1([0,\infty),t^{N-1}\,dt)$ for some $N\geq 1$. Then, for almost every $T>0$ and for all $s\geq 0$ and $R>0$ so that $s^2+R^2=T^2$, if we take points $z^n \in \real^n$ with $|z^n|=s$ and we denote $B(z^n,R)=\{y\in\real^n: |z^n-y|< R\}$, the following holds
\[
\lim_{n\rightarrow\infty} \fint_{B(z^n,R)} w_0(|x|)\, dm_n(x) = w_0(T).
\]
\end{lema}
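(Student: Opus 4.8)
The plan is to reduce the averaged integral over $B(z^n,R)$ to a one-dimensional integral against an explicit kernel, and then recognize the resulting family of kernels as an approximate identity concentrated at $T=\sqrt{s^2+R^2}$. First I would write the average in polar-type coordinates adapted to the ball: for $x\in B(z^n,R)$, parametrize by $t=|x|$ and the remaining angular variables. Slicing $B(z^n,R)$ by spheres $|x|=t$, the slice $\{|x|=t\}\cap B(z^n,R)$ is a spherical cap, and its $\sigma_{n-1}$-measure can be written down explicitly in terms of $n$, $s$, $R$ and $t$. This yields a representation
\[
\fint_{B(z^n,R)} w_0(|x|)\, dm_n(x) = \int_0^\infty w_0(t)\,\varphi_n(t)\,dt,
\]
where $\varphi_n$ is a probability density on $(0,\infty)$ supported on the interval $[\,|s-R|,\,s+R\,]$ (the range of $|x|$ as $x$ runs over the ball), given by the normalized cap areas times $t^{n-1}$.

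The heart of the matter is then the claim that $\{\varphi_n\}$ is an approximate identity at the point $T$, i.e. $\varphi_n \to \delta_T$ weakly as $n\to\infty$. To see why $T=\sqrt{s^2+R^2}$ is the concentration point, note that for a point $x$ in the ball written as $x=z^n+v$ with $|v|<R$, one has $|x|^2 = s^2 + |v|^2 + 2\langle z^n, v\rangle$; in high dimensions a "typical" $v$ of length close to $R$ is nearly orthogonal to the fixed direction $z^n/s$ (the inner product term concentrates at $0$) and, since the volume of a ball concentrates near its boundary sphere, $|v|$ concentrates at $R$. Hence $|x|^2$ concentrates at $s^2+R^2$. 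Making this quantitative: I would show that for any $\eps>0$, the mass $\int_{\{|t-T|>\eps\}} \varphi_n(t)\,dt \to 0$, by estimating the cap-area weights — equivalently, by a direct concentration-of-measure computation for the Lebesgue (or uniform) measure on $B(z^n,R)$ showing $m_n\big(\{x\in B(z^n,R): \big||x|^2 - (s^2+R^2)\big| > \eps\}\big)/m_n(B(z^n,R)) \to 0$. The clean way is: the normalized radial density inside a ball, $|v|^{n-1}/R^{n-1}$ appropriately scaled, concentrates at $|v|=R$; and the conditional distribution of $\langle z^n/s, v\rangle$ given $|v|=\rho$ is that of $\rho$ times the first coordinate of a uniform point on $S^{n-1}$, which concentrates at $0$ with Gaussian-type fluctuations of order $n^{-1/2}$.

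Once weak convergence $\varphi_n \to \delta_T$ is established, I would conclude by a standard Lebesgue-point argument: since $w_0 \in L^1_{\loc}([0,\infty), t^{N-1}dt)$ and the $\varphi_n$ are supported in a fixed compact interval $[\,|s-R|, s+R\,]$ bounded away from $0$ when $s>0$ (and the case $s=0$, forcing $R=T$, is the elementary statement that the average of $w_0(|x|)$ over $B_R^n$ tends to $w_0(R)$ as $n\to\infty$, handled the same way), the quantity $\int w_0(t)\varphi_n(t)\,dt \to w_0(T)$ for every $T$ that is a Lebesgue point of $w_0$ with respect to the relevant density — hence for a.e. $T$. A mild technical point is uniform integrability: I would control $\int |w_0(t)-w_0(T)|\varphi_n(t)\,dt$ by splitting into $\{|t-T|\le\eps\}$, where $|w_0(t)-w_0(T)|$ is small in average near a Lebesgue point and $\varphi_n$ has total mass $\le 1$, and $\{|t-T|>\eps\}$, where the $\varphi_n$-mass vanishes while $w_0$ is integrable there; some care is needed to commute the small-$\eps$ and large-$n$ limits, which is why the Lebesgue-point formulation (giving control of the averages of $|w_0 - w_0(T)|$ uniformly in small scales) is the right tool.

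The main obstacle I expect is the quantitative concentration estimate for $\varphi_n$: getting an honest bound on the ratio of cap-areas (or equivalently on the tail probability $\mathbb{P}(\,||x|^2 - (s^2+R^2)| > \eps\,)$ for $x$ uniform in $B(z^n,R)$) that goes to $0$ as $n\to\infty$, uniformly enough in $T$ to conclude for a.e. $T$ rather than a fixed $T$. The cleanest route is probably to write $x = z^n + \rho\,\omega$ with $\rho$ distributed on $[0,R]$ with density proportional to $\rho^{n-1}$ and $\omega$ uniform on $S^{n-1}$ independent of $\rho$, observe $|x|^2 = s^2 + \rho^2 + 2s\rho\,\omega_1$, and use (i) $\rho^2 \to R^2$ in probability because $\rho/R$ has density $n u^{n-1}$ on $[0,1]$, and (ii) $\omega_1 \to 0$ in probability at rate $n^{-1/2}$; combining gives $|x|^2 \to s^2+R^2$ in probability, which transfers to weak convergence of the pushforward of $\varphi_n$ and hence to the statement after integrating $w_0$.
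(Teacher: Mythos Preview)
Your reduction to a one-dimensional integral $\int w_0(t)\,\varphi_n(t)\,dt$ and your argument that $\varphi_n\to\delta_T$ weakly are correct and match the paper's first step (the paper carries out essentially the same computation, using geometric rather than probabilistic language to show concentration). This suffices for continuous $w_0$.

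The gap is in the passage to general $w_0\in L^1_{\loc}$. Your proposed Lebesgue-point splitting does not close: on the near set $\{|t-T|\le\eps\}$, knowing that $\varphi_n$ has total mass $\le 1$ and that $|w_0-w_0(T)|$ has small \emph{average} there gives no bound on $\int_{|t-T|\le\eps}|w_0(t)-w_0(T)|\,\varphi_n(t)\,dt$, because $\varphi_n$ is unbounded (its sup-norm blows up as $n\to\infty$). What is needed is a \emph{shape} estimate on $\varphi_n$ allowing domination by a maximal function, i.e. an inequality of the form $\sup_n \int w_0\,\varphi_n \le C_{s,R}\,\tilde M\overline w_0(T)$ for the restriction $\overline w_0$ to a compact interval. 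The paper obtains this by proving that $\varphi_n^{(s,R)}$ is \emph{decreasing} on $[T,s+R]$ (giving control by the right one-sided maximal function $M_{\rightarrow}$) and, on the left of $T$, is comparable to an \emph{increasing} function up to a factor $\cos^{-1}\alpha(t)\le T/s$ (giving control by $(1+R/s)\,M_{\leftarrow}$). Only with this monotonicity does the layer-cake argument convert the approximate-identity structure into a maximal bound, after which one approximates $w_0$ by continuous $g$ and invokes the weak-type $(1,1)$ inequality for $\tilde M$.

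A second, related issue you flag but do not resolve is the quantifier order: the statement asks for a single null set of $T$'s outside of which convergence holds for \emph{every} pair $(s,R)$ on the circle $s^2+R^2=T^2$. Since the maximal bound above carries the factor $1+R/s$, the paper stratifies by $R/s\le k$, obtains a null exceptional set for each $k\in\mathbb N$, and takes the countable union. Your probabilistic concentration estimates, even if made quantitative, would still need to be upgraded to such a maximal inequality (with controlled dependence on $(s,R)$) before the a.e.\ conclusion follows. Finally, a minor correction: for $0<s<R$ the support of $\varphi_n$ is $[0,s+R]$, not bounded away from $0$; the paper handles this using the integrability of $w_0$ against $t^{N-1}\,dt$ together with the pointwise bound $t^{1-N}\varphi_n(t)\to 0$ uniformly on $[0,t_0]$ for $t_0<T$.
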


\bigskip

\begin{proof}
The idea is to exploit the fact that in high dimensions the measure of a ball concentrates around `maximal circles'. We will assume that $w_0$ is positive.

\bigskip

Fix $T>0$, and take positive $s$, $z^n\in \real^n$ with $|z^n|=s$ and $R$ with $s^2+R^2=T^2$. Observe that
\[
\fint_{B(z^n,R)} w_0(|x|)\, dm_n(x) = \frac{n}{\omega_{n-1}R^n} \int_{(s-R)_+}^{s+R} w_0(t) A_n(t) t^{n-1} \,dt.
\]
where $A_n(t)=A_n^{(s,R)}(t)=\left|\{\theta\in \mathbb S^{n-1}\,:\, t\theta\in B(z^n,R)\}\right|_{n-1}$. Define
\[
\varphi_n(t)=\varphi_n^{(s,R)}(t)=\frac{n}{\omega_{n-1}R^n} A_n(t) t^{n-1} \chi_{[(s-R)_+,s+R]}(t).
\]
With this notation we have
\[
\fint_{B(z^n,R)} w_0(|x|)\, dm_n(x) = \int_{-\infty}^\infty w_0(t) \varphi^{(s,R)}_n(t)\,dt=: \Phi_n^{(s,R)}w_0.
\]
For a continuous $w_0$ the proof follows if we show that $\varphi_n\tiende^{n\rightarrow\infty}\delta_{T}$ in the sense of distributions. For this it is enough to check that $\varphi_n$ is an approximation of the identity at the point $t=T$, that is
\begin{enumerate}[i)]
\item\label{point.i} $\varphi_n\geq 0$ for all $n\in\mathbb N$,
\item\label{point.ii} $\int_{-\infty}^\infty \varphi_n(t)\,dt=1$ for all $n\in \mathbb N$,
\item\label{point.iii} $\forall \eps>0$ one has $I_\eps(n)=\int_{|t-T|>\eps} \varphi_n(t)\,dt \tiende^{n\rightarrow\infty} 0$.
\end{enumerate}
Note that \ref{point.i}) is trivial and \ref{point.ii}) is immediate from the observation that
\[
\omega_{n-1}\int_{(s-R)_+}^{s+R} A_n(t) t^{n-1}\,dt= |B(z^n,R)|_n.
\]
To see \ref{point.iii}) observe that $I_\eps(n) = I_\eps^1(n) + I_\eps^2(n)$ where
\begin{eqnarray*}
I_\eps^1(n) &=& \int_{-\infty}^{T-\eps} \varphi_n(t)\,dt = \frac{|B(z^n,R)\cap B_{T-\eps}|_n}{|B(z^n,R)|_n}\leq \left(\frac{R_1}R\right)^n\ \tiende^{n\rightarrow\infty} 0, \\ I_\eps^2(n) &=& \int_{T+\eps}^\infty \varphi_n(t)\,dt =\frac{|B(z^n,R)\setminus B_{T+\eps}|_n}{|B(z^n,R)|_n}\leq \left(\frac{R_2}R\right)^n\ \tiende^{n\rightarrow\infty} 0,
\end{eqnarray*}
and $R_1$ and $R_2$ are the radii of the minimal balls that contain $B(z^n,R)\cap B_{T-\eps}$ and $B(z^n,R)\setminus B_{T+\eps}$ respectively. It is obvious that $R_1, R_2<R$. This proves Lemma \ref{lema.differentiation}, $\forall T>0$,  if   $w_0$ is continuous in $[0,\infty)$.

\bigskip

For a general $w_0$ we have to show that, if $P_T=\{(s,R)\,:\ s\geq 0,\,R>0,\, s^2+R^2=T^2\}$, the set $E_0(w_0)$ defined as
\[
\left\{T\in [\varepsilon,T_0]\,:\, \exists (s,R)\in P_T\,:\, \limsup_{n\rightarrow\infty} \Phi_n^{(s,R)}w_0 - \liminf_{n\rightarrow\infty} \Phi_n^{(s,R)}w_0>0\right\},
\]
has measure $0$ for every $\varepsilon, T_0$ with $0<\varepsilon<T_0$. 

\bigskip

To that end, we fix $T\in [\varepsilon,T_0]$. For $t\in [(s-R)_+,s+R]$ we denote by $y(t)$  the diameter of the set $\partial B_t \cap B(z^n,R)$. Observe that $y(t)\leq R$ and that $y$ increases with $t$ up to the point $t=T$, where it attains its maximum, and then decreases in $(T, s+R)$. Also, for each $t\in [|s-R|,s+R]$ we call $\alpha(t)$  the angle between the segment connecting the origin with $z^n$ and the one joining the origin with any point in $\partial B_t \cap \partial B(z^n,R)$. Clearly $y(t)=t\sin \alpha(t)$.

\bigskip

Observe first that the function $\varphi_n^{(s,R)}$ is decreasing in the interval $[T,s+R]$. To see this note that
\begin{equation}
\frac{A_n(t)t^{n-1}}{\omega_{n-2}} = t^{n-1}\int_0^{\alpha(t)}(\sin \beta)^{n-2}\,d\beta = \int_0^{y(t)} u^{n-2}\,\frac{du}{\sqrt{1-(u/t)^2}},
\end{equation}
and that, both, the integrand and  $y(t)$  decrease with $t$ in this interval. Hence,
\[
\int_T^{s+R}\varphi_n^{(s,R)}(t)w_0(t)\,dt\leq M_{\rightarrow}\overline w_0(T),
\]
where $M_{\rightarrow}$ denotes the one-sided maximal operator
\[
M_{\rightarrow}f(t)=\sup_{h>0}\frac1h\int_t^{t+h} |f(s)|\,ds,
\]
and $\overline w_0$ is the restriction of $w_0$ to the interval $[\varepsilon,2T_0]$.

\bigskip

Let us assume that $s>0$. If $\frac \pi 2\le \alpha(t)\le \pi$, we will use that 
\begin{equation} \label{one} t^{1-N}\,\varphi_n^{(s,R)}(t)\le \frac{nt^{n-N}}{R^n}.\end{equation}
This case may happen only if $R>s$ and, then, $t\le \sqrt{R^2-s^2}<R.$ For $\alpha(t)<\frac \pi 2$ we have the estimate
\begin{equation} \label{two} t^{1-N}\,\varphi_n^{(s,R)}(t)\le \frac{nt^{n-N}}{\omega_{n-1}R^n}\, \omega_{n-2} \, \alpha(t)\, \sin^{n-2}\alpha(t) \le \frac{n \omega_{n-2}}{\omega_{n-1}R^n}\,  \, \frac \pi 2\, y(t)^{n-N}  .\end{equation}
Combining (\ref{one}) and (\ref{two}) we see that for all $t_0<T$ we have 
\[
\lim_{n\rightarrow\infty}\left[\sup_{t<t_0} t^{1-N}\,\varphi_n^{(s,R)}(t)\right] =0.
\]
As a consequence, if $t_0<T$
\begin{eqnarray*}
\limsup_{n\rightarrow\infty}\int_{0}^{t_0} \varphi_n^{(s,R)}(t)\ w_0(t)\,dt &\le& 
\lim_{n\rightarrow\infty}\left[\sup_{t<t_0} t^{1-N}\,\varphi_n^{(s,R)}(t)\right] \int_{0}^{t_0}  w_0(t)t^{N-1}\,dt\\&=&0.
\end{eqnarray*}
Still for $\alpha(t)<\frac \pi 2$, we make the observation that 
$$
A_n(t)=\omega_{n-2}  \int_0^{\alpha(t)}(\sin \beta)^{n-2}\,\cos \beta \, \cos^{-1} \beta\, d\beta,
$$
and so, $\frac{\omega_{n-2}}{n-1}\, \sin^{n-1}\alpha(t)\le A_n(t)\le 
\frac{\omega_{n-2}}{n-1}\, \sin^{n-1}\alpha(t)\,\cos^{-1}\alpha(t). $ Thus, if we call 
\[
\tilde \varphi_n^{(s,R)}(t) = \frac{n}{n-1} \frac{\omega_{n-2}}{\omega_{n-1}}\frac{y(t)^{n-1}}{R^n},
\]
then we have the double estimate
$$\tilde \varphi_n^{(s,R)}(t)\leq \varphi_n^{(s,R)}(t) \leq \tilde \varphi_n^{(s,R)}(t)\cdot \cos^{-1}\alpha(t).
$$
It is important to point out that $\tilde \varphi_n^{(s,R)}(t)$ is increasing in $[0,T]$ and that 
$$\int_{[0,T]}\tilde\varphi_n^{(s,R)}(t)\, dt \le \int_{[0,T]}\varphi_n^{(s,R)}(t)\, dt \le 1.
$$ 
The final observation is that there exists $t_1=t_1(s,R)<T$, so that $\alpha(t)$ decreases in $[t_1,T]$. Hence, for $t_0\in [\max \{t_1,\,\varepsilon\}, \, T)$ we have 
\begin{eqnarray*}
\int_{t_0}^T \varphi_n^{(s,R)}(t)w_0(t)\,dt&\leq& \int_{t_0}^T \tilde\varphi_n^{(s,R)}(t)w_0(t)\,\cos^{-1}\alpha(t)\,dt  \\&\leq& {\cos^{-1}\alpha(t_0)} {M_{\leftarrow}\overline w_0(T)} ,
\end{eqnarray*}
where $M_\leftarrow$ is the maximal operator
\[
M_{\leftarrow}f(t)=\sup_{h>0}\frac1h\int_{t-h}^t |f(s)|\,ds.
\]
For this $t_0$,
\begin{eqnarray*}
\limsup_{n\rightarrow\infty} \Phi_n^{(s,R)}w_0 &=&\limsup_{n\rightarrow\infty}
\left(\int_0^{t_0}  \varphi_n^{(s,R)}(t)w_0(t)\,dt+\int_{t_0}^T  \varphi_n^{(s,R)}(t)w_0(t)\,dt\right)\\[2mm]& \leq & {\cos^{-1}\alpha(t_0)} {M_{\leftarrow}\overline w_0(T)}.
\end{eqnarray*}
Taking the limit as $t_0 \rightarrow T^-$, we have 
\begin{eqnarray*}
\limsup_{n\rightarrow\infty} \Phi_n^{(s,R)}w_0 &\leq& 
 {\cos^{-1}\alpha(T)} {M_{\leftarrow}\overline w_0(T)}=\frac Ts \, {M_{\leftarrow}\overline w_0(T)}\\ &\leq& \left(1+\frac Rs\right)\, M_{\leftarrow}\overline w_0(T).
\end{eqnarray*}
Therefore, if $R/s\leq k$ for $k\in\mathbb N$ we have
\[
\limsup_{n\rightarrow\infty} \Phi_n^{(s,R)}w_0 \leq M_{\rightarrow}\overline w_0(T)+(1+k) M_{\leftarrow} \overline w_0(T) \leq 3k \tilde M\overline w_0(T),
\]
with $\tilde M$ denoting, as in (\ref{maximal.no.centrado}), the non centered one dimensional maximal operator
\[
\tilde M f(t)=\sup_{a\leq t\leq b} \frac1{b-a}\int_a^b |f(s)|\,ds.
\]
Observe that if $s=0$, then $\varphi_n^{(0,T)}(t)$ is an increasing function in the interval $[0,T]$. Arguing as before to remove the integration on $[0,\varepsilon]$, we have  
\[
\limsup_{n\rightarrow\infty} \Phi_n^{(0,T)}w_0\leq M_\leftarrow \overline w_0(T) \leq \tilde M\overline w_0(T).
\]

\bigskip

To finish the proof, for $T>0$ and $k\in\mathbb N$ we denote $P_T^k=\{(s,R): s,R>0,\ R/s\leq k,\  s^2+R^2=T^2\}\cup \{(0,T)\}$ and we define for $\lambda>0$ the set $E_\lambda^k(w_0)$ as
\[
\left\{T\in [\varepsilon,\, T_0]\,:\, \exists (s,R)\in P_T^k\,:\, \limsup_{n\rightarrow\infty} \Phi_n^{(s,R)}w_0 - \liminf_{n\rightarrow\infty} \Phi_n^{(s,R)}w_0>\lambda\right\}.
\]
Note that if $g$ is continuous, the previous considerations and  (\ref{desigualdad.unidimensional}) give
\begin{eqnarray*}
\left|E_\lambda^k(w_0)\right| &=& \left|E_\lambda^k(w_0-g)\right| \leq \left|\left\{ T>0\,:\, 6k \tilde M (\overline w_0-\overline g)(T)>\lambda\right\}\right|\\&\leq & \frac{12k}\lambda \|\overline w_0-\overline g\|_{L^1},
\end{eqnarray*}
This last term can be made as small as needed with an appropriate choice of $g$. Therefore $\left|E_\lambda^k(w_0)\right|=0$. As a consequence, $\left|E_0^k(w_0)\right|=\left|\bigcup_{\lambda>0} E_\lambda^k(w_0)\right| =0$ and, hence, $\left|E_0(w_0)\right|=\left|\bigcup_{k\in\mathbb N} E_0^k(w_0)\right|=0$, as wanted.

\end{proof}

\bigskip

\subsection{Some applications of the differentiation through dimensions}\label{sect.proof3.3}

As a consequence of this new technique of differentiation, we prove here Theorem \ref{teo.ecoda.wdoubling} and show the equivalence between the two properties of weakly doubling and strong $n$-micro-doubling when they hold uniformly with the dimension.

\begin{proof}[Proof of Theorem \ref{teo.ecoda.wdoubling}]
First suppose that $\mu$ is uniformly weakly doubling with constant $K_1$ in each $\real^n$ with $n\geq N$ for some $N\in \mathbb N$. We have to prove that $w$ is essentially constant over dyadic annuli.

\bigskip

With $P_T$ as in the previous proof, let
\[
\mathcal{T}=\left\{ T>0\,:\,  \lim_{n\rightarrow\infty} \frac{\mu(B(z^n,R))}{|B(z^n,R)|}=w_0(T), \quad \forall (|z^n |,R)\in P_T \right\}.
\]
We know from Lemma \ref{lema.differentiation} that $\mathcal{T}$ has full measure in $(0,\infty)$.

\bigskip

Now, for $R\in \mathcal{T}$ and $T\in \mathcal{T}\cap [R,2R]$ we take $z^n\in \real^n$ with $|z^n|^2+R^2=T^2$. Observe that $0\leq|z^n|\leq \sqrt{(2R)^2-R^2}=\sqrt3 R$, so that $B_R\cap B(z^n,R)\neq \emptyset$. By the hypothesis on $\mu$
\[
\frac1{K_1} \frac{\mu(B_R)}{|B_R|} \leq \frac{\mu(B(z^n,R))}{|B_R|} \leq K_1\, \frac{\mu(B_R)}{|B_R|}.
\]
Taking limits when $n\rightarrow\infty$, as we may since $R, T\in \mathcal{T}$ we get
\begin{equation}\label{after.differentiation}
\frac1{K_1}\ w_0(R)\leq w_0(T) \leq K_1\ w_0(R).
\end{equation}
This shows that $w_0$ is essentially constant on dyadic intervals with constant $\beta=K_1$.

\bigskip

The reverse implication is straightforward and will be omitted.
\end{proof}

\bigskip

We finish this section with a proof of the following important equivalence.

\bigskip

\begin{teo}\label{teo.ecoda.strong} A rotation invariant measure $\mu$ is uniformly strong $n$-micro-dou\-bling in each $\real^n$ for $n\geq N$ if and only if $\mu$ is uniformly weakly doubling.
\end{teo}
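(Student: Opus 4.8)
The plan has two parts. The implication ``uniformly strong $n$-micro-doubling $\Rightarrow$ uniformly weakly doubling'' is immediate, since by the paper's terminology a strong $n$-micro-doubling measure is in particular weakly doubling, with the same constant. For the converse we are given that $\mu$ is weakly doubling uniformly in $n$, so it remains only to show that $\mu$ is also $n$-micro-doubling with a constant independent of $n$ for all large $n$; then ``strong $n$-micro-doubling with uniform constants'' follows by definition. By Theorem \ref{teo.ecoda.wdoubling}, the uniform weak-doubling hypothesis is equivalent to saying that $w(x)=w_0(|x|)$ is essentially constant over dyadic annuli with some constant $\beta$, and by the remark following Theorem \ref{teo.ecoda.unif} we may assume $w$ continuous on $\real^n\setminus\{0\}$. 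So the whole problem reduces to: if $w_0$ is essentially constant over dyadic intervals with constant $\beta$, then there are $N=N(\beta)$ and $K_0=K_0(\beta)$ with $\mu(B(x,(1+1/n)R))\le K_0\,\mu(B(x,R))$ for every ball and every $n\ge N$.

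The main device I would use is the radial contraction toward the center. Write $\rho=(1+1/n)R$ and let $\Psi(y)=x+\frac{R}{\rho}(y-x)$, a bijection of $B(x,\rho)$ onto $B(x,R)$ with constant Jacobian $(R/\rho)^n=(1+1/n)^{-n}\ge e^{-1}$. Changing variables gives
\[
\mu(B(x,R))=\left(\frac{R}{\rho}\right)^{n}\int_{B(x,\rho)} w_0(|\Psi(y)|)\,dy\ \geq\ \frac1e\int_{B(x,\rho)} w_0(|\Psi(y)|)\,dy .
\]
Since $|\Psi(y)-y|=\frac{R/n}{\rho}|y-x|<R/n$ for $y\in B(x,\rho)$, the radii $|y|$ and $|\Psi(y)|$ differ by less than $R/n$, hence for $y$ with $|y|>2R/n$ they lie within a factor $2$ of one another, so $w_0(|\Psi(y)|)\ge\beta^{-2}w_0(|y|)$ by the dyadic-annulus hypothesis. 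Therefore
\[
\mu(B(x,R))\ \geq\ \frac{\beta^{-2}}{e}\left(\mu(B(x,\rho))-\mu\bigl(B(x,\rho)\cap B_{2R/n}\bigr)\right),
\]
and everything is reduced to the single estimate $\mu(B(x,\rho)\cap B_{2R/n})\le\frac12\mu(B(x,\rho))$, which then gives the claim with $K_0=2e\beta^2$.

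To prove that last inequality, note first that if $B(x,\rho)\cap B_{2R/n}=\emptyset$ there is nothing to do; otherwise $|x|<\rho+2R/n<4R$. In that regime I would bound $\mu(B_{2R/n})\le 2\beta\,w_0(2R/n)\,|B_{2R/n}|$ using Lemma \ref{lema.ecoda.hardy} (valid for $n\ge N(\beta)$), and lower-bound $\mu(B(x,\rho))$ by exhibiting a fixed-proportion sub-ball bounded away from the origin: $B(x,\rho)$ contains a ball $B(c,\rho/4)$ obtained by translating $x$ radially outward by $3\rho/4$ (any direction if $x=0$), and on $B(c,\rho/4)$ the radius stays between $R/2$ and $6R$, so $w_0$ there is comparable to $w_0(R)$ up to a fixed power of $\beta$; hence $\mu(B(x,\rho))\ge c(\beta)\,w_0(R)\,|B(c,\rho/4)|$. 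Dividing the two bounds,
\[
\frac{\mu(B_{2R/n})}{\mu(B(x,\rho))}\ \leq\ C(\beta)\,\frac{w_0(2R/n)}{w_0(R)}\left(\frac{8}{n}\right)^{n}.
\]
The point is that, because $2R/n$ and $R$ are separated by only $O(\log n)$ dyadic scales, the dyadic-annulus hypothesis forces $w_0(2R/n)/w_0(R)$ to grow at most like $n^{\log_2\beta}$, while $(8/n)^{n}$ decays super-exponentially; so the right-hand side is $\le\frac12$ once $n\ge N(\beta)$.

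The step I expect to be the main obstacle is exactly this last one. The contraction $\Psi$ is essentially lossless everywhere except near the origin, where $w_0$ is allowed to blow up or to vanish, and one must show that the sliver $B(x,\rho)\cap B_{2R/n}$ carries negligible $\mu$-mass compared with $B(x,\rho)$ even though $B(x,\rho)$ need not contain any ball centered at $0$. Finding a usable volume lower bound for $\mu(B(x,\rho))$ in that range of $|x|$, and then checking that the resulting exponential gain beats the polynomial growth of $w_0$ permitted by the dyadic-annulus condition, is the crux; everything else is routine once $w$ is known to be essentially constant over dyadic annuli.
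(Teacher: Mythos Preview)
Your argument is correct, but it takes a different route from the paper's. Both proofs begin the same way: the forward implication is trivial, and for the converse one invokes Theorem~\ref{teo.ecoda.wdoubling} to reduce to showing that an essentially-constant-over-dyadic-annuli density gives a uniformly $n$-micro-doubling measure. From there the arguments diverge. The paper proceeds by a direct case split at $|x|=3R$: when $|x|\ge 3R$ the whole dilated ball lies in a fixed dyadic band around $|x|$, so $w$ is pointwise comparable to $w(x)$ and $\mu(B^\ast)\le e\beta^4\mu(B)$ immediately; when $|x|\le 3R$ the paper splits $B^\ast(x,R)$ at the \emph{fixed} scale $|y|=R/2$, bounds the inner piece by Lemma~\ref{lema.ecoda.hardy} as $\mu(B_{R/2})\le C(\beta)w_0(R)|B_R|$, bounds the outer piece by $e\beta^3 w_0(R)|B_R|$, and matches both against the lower bound $\mu(B(x,R))\ge\frac{1}{2\beta^2}w_0(R)|B_R|$.

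Your approach instead uses the contraction $\Psi$ toward the center of the ball, absorbing the Jacobian loss $(1+1/n)^{-n}\ge e^{-1}$ and comparing $w_0(|\Psi(y)|)$ with $w_0(|y|)$ pointwise away from the origin; the cutoff is at the \emph{shrinking} scale $|y|=2R/n$, and the residual near-origin piece is then killed by the super-exponential factor $(8/n)^n$ against the merely polynomial growth $w_0(2R/n)/w_0(R)\lesssim n^{\log_2\beta}$. This works and is a nice, self-contained idea, but it is heavier than necessary: the paper avoids both the change of variables and the asymptotic comparison by choosing the cutoff at a scale commensurate with $R$, so that all annuli involved are within a bounded number of dyadic steps of $R$ and only fixed powers of $\beta$ appear. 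What your approach buys is a single unified estimate with no case split on $|x|$; what the paper's approach buys is a shorter argument with explicit constants and no need to chase the $n$-dependence of $w_0(2R/n)$.
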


\bigskip

\begin{proof}
In view of Theorem \ref{teo.ecoda.wdoubling} we only need to prove that a measure given by a radial density that is essentially constant over dyadic annuli, is also uniformly $n$-micro-doubling.

\bigskip

Assume that $w$ is essentially constant over dyadic annuli with constant $\beta$ and take $n\geq N$ for the $N\in\mathbb N$ obtained in Lemma \ref{lema.ecoda.hardy}. Given $x\in\real^n$ and $R>0$, we will write $B^\ast(x,R)=B(x,(1+1/n)R)$. We first consider the case  $|x|\geq 3R$, which implies that $(1+1/n)R\leq 2|x|/3$. If $y\in B^\ast(x,R)$, then $|x|/3\leq |y|\leq 5|x|/3$ which means that $\beta^{-2}\,w(x)\leq w(y)\leq \beta^2 w(x)$. From this we get
\begin{eqnarray*}
\mu\bigl(B^\ast(x,R)\bigr) &=& \int_{B^\ast(x,R)} w(y)\,dy \leq \beta^2\ w(x)\ |B^\ast(x,R)|\\ &\leq& e\beta^2\ w(x)\ |B(x,R)|\leq  e\beta^4\ \mu(B(x,R)).
\end{eqnarray*}

\bigskip

If $|x|\leq 3R$ we split $B^\ast (x,R)$ into two disjoint pieces. For the one intersecting $B_{R/2}$ we use Lemma \ref{lema.ecoda.hardy} as follows
\[
\mu(B^\ast(x,R)\cap B_{R/2}) \leq \mu(B_{R/2}) \leq 2\beta\ w_0(R/2)\ |B_{R/2}| \leq \beta^2\ w_0(R)\ |B_R|.
\]
In the complementary piece $w$ is essentially constant: if $y\in B^\ast(x,R)\setminus B_{R/2}$, then $R/2\leq |y| \leq 5R$, which means that $\beta^{-3}w_0(R)\leq w(y)\leq \beta^3\,w_0(R)$. Hence
\[
\mu(B^\ast(x,R)\setminus B_{R/2}) \leq \beta^3\,w_0(R)\ |B^\ast(x,R)| \leq e\beta^3\,w_0(R)\ |B_R|.
\]
On the other hand note that if $y\in B(x,R)\setminus B_{R/2}$ then $R/2\leq|y|\leq 4R$ and consequently $\beta^{-2}w_0(R)\leq w(y)\leq \beta^2\,w_0(R)$. This yields
\begin{eqnarray*}
\mu(B(x,R))&\geq& \mu(B(x,R)\setminus B_{R/2}) \geq \frac1{\beta^{2}} \,w_0(R)\ |B(x,R)\setminus B_{R/2}| \\&\geq& \frac1{2\beta^{2}}\,w_0(R)\ |B_R|.
\end{eqnarray*}
\end{proof}

\bigskip

\begin{xrem}
Observe that the only thing that we have really proved here is that a uniform weakly doubling property implies a uniform micro-doubling property. The reciprocal is not true. In the last Section, we show an example of a density for which the associated measure is uniformly $n$-micro-doubling but is not uniformly weakly doubling.
\end{xrem}

\bigskip

\subsection{Decreasing densities.}\label{sect.decrecientes}

\bigskip

Radial measures with decreasing densities have some interesting and sometimes surprising properties. For instance, to ensure that a measure with such a density is uniformly weakly bounded, it is enough to check this condition just in one concrete Euclidean space.

\bigskip

\begin{prop}\label{prop.dec.unif.wdoubling}
Let $w_0$ be a decreasing function over $[0,\infty)$. If $\mu$ is weakly doubling in $\real^N$ for some $N\in\mathbb N$, then $\mu$ is uniformly weakly doubling in $\real^n$ for all $n\geq N$.
\end{prop}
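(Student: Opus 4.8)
The plan is to reduce the statement, via rotation invariance and a Fubini slicing, to a one–parameter comparison in each fixed dimension, exploiting that a non‑increasing radial density makes the mass of a ball a monotone function of the distance from its centre to the origin.

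First the reduction. Since $\mu$ is rotation invariant, $\mu_m(B(x,R))$ depends only on $|x|$ and $R$; write it $V^m_R(|x|)$, and note that $r\mapsto V^m_R(r)$ is non‑increasing because $w_0$ is. Hence the ``easy half'' of weak doubling (the ball farther from the origin against the closer one) holds with constant $1$. For the other half, observe that if $B(x_1,R)\cap B(x_2,R)\ne\emptyset$ then $\big||x_1|-|x_2|\big|<2R$, so the two collinear balls with the same centre–distances also meet and carry the same values of $V^m_R$. Thus, for every $m$, weak doubling of $\mu$ in $\real^m$ is equivalent to $V^m_R(\sigma)\le K\,V^m_R(\sigma')$ whenever $0\le\sigma\le\sigma'<\sigma+2R$, and it suffices to prove this with one $K$ for all $m\ge N$.

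Now the transfer, done one dimension at a time by slicing $\real^{m+1}=\real\times\real^m$ along the common axis of the two collinear balls. A horizontal slice of $B^{m+1}(\sigma e_1,R)$ at first coordinate $\eta$ is the \emph{concentric} ball $B^m\big(0,\sqrt{R^2-(\eta-\sigma)^2}\big)$ carrying the again non‑increasing density $w_\eta(u):=w_0\big(\sqrt{u^2+\eta^2}\big)$. Setting $\Psi_r(\tau):=\int_{B^m(0,r)}w_\tau(|y|)\,dy$ and using the substitution $\eta\mapsto\eta+(\sigma'-\sigma)$, the desired inequality $\mu_{m+1}(B^{m+1}(\sigma e_1,R))\le K\,\mu_{m+1}(B^{m+1}(\sigma' e_1,R))$ follows, slice by slice, from the \emph{scalar} estimate
\[
\Psi_r(\tau)\le K\,\Psi_r(\tau')\qquad\text{for } 0\le\tau\le\tau'<\tau+2R,\ 0<r\le R,
\]
the slices on which $g=\sigma'-\sigma$ forces $|\eta+g|\le|\eta|$ going the right way automatically. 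The polar slabs, where $\sqrt{R^2-(\eta-\sigma)^2}$ is so small that the sliced balls no longer overlap, are split off and estimated by hand: there $w_0$ is sampled far from the origin and the crude bound $\Psi_r(\tau)\le w_0(\tau)\,|B^m(0,r)|$ together with monotonicity suffices.

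The heart of the argument — and the step I expect to be the main obstacle — is the scalar estimate, uniformly in $r$ and $m$. The key is that for a \emph{decreasing} density the weak–doubling hypothesis in $\real^N$ is equivalent to the scale–invariant condition that $w_0$ be essentially constant on dyadic annuli: testing weak doubling against $B_R$ and a competitor centred at distance $2R$ gives $\mu_N(B_R)\le K_1\,\mu_N(B(x^\ast,R))$, and, since in that ball the mass concentrates near the equatorial sphere of radius $\sqrt5\,R$, one is led to a bound of the form $w_0(R^-)\le\beta\,w_0(cR)$ with $c>1$ absolute, which chains to (\ref{ecoda}). Granting this, each lifted density $w_\tau(u)=w_0(\sqrt{u^2+\tau^2})$ inherits the \emph{same} constant $\beta$ — on a dyadic $u$–interval the value $\sqrt{u^2+\tau^2}$ varies by a factor at most $2$, hence stays in one dyadic annulus of $w_0$ — so Lemma \ref{lema.ecoda.hardy} applies to $w_\tau$ once $m$ is large and shows $\Psi_r(\tau)$ comparable, up to a constant depending only on $\beta$, to $\max\{w_0(\tau)\,|B^m(0,r)|,\ \mu_{w_0,m}(B^m(0,r))\}$, an expression depending only mildly on $\tau$; the scalar estimate follows. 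The finitely many dimensions $N\le m<N_0(\beta)$ left over are dealt with by invoking Theorem \ref{teo.ecoda.wdoubling} (or by running the same slicing keeping the explicit dimensional constants), and since all the constants produced stay bounded, $\mu$ is uniformly weakly doubling in $\real^n$ for every $n\ge N$.
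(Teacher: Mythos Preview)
Your plan has the right target but a genuine gap at its centre, and the surrounding machinery is unnecessary.

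The ``heart of the argument'' you identify is exactly right: everything reduces to showing that $w_0$ is essentially constant on dyadic intervals from the hypothesis that $\mu$ is weakly doubling in a \emph{single} $\real^N$. But your sketch of this step is flawed. You propose to test weak doubling on $B_R$ against a ball centred at distance $2R$ and then use that ``the mass concentrates near the equatorial sphere of radius $\sqrt5\,R$''. That concentration is precisely the differentiation-through-dimensions phenomenon of Lemma~\ref{lema.differentiation}, which requires $n\to\infty$; in a fixed $\real^N$ the mass of a ball does not concentrate on any sphere, and no inequality of the form $\mu_N(B(x^\ast,R))\lesssim w_0(\sqrt5\,R)\,|B_R|$ is available. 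Without this, your route to the bound $w_0(R)\le\beta\,w_0(cR)$ collapses.

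The paper obtains this key step by a short, purely $N$-dimensional argument that avoids any concentration: take three consecutive tangent balls of radius $R/2$ centred at $z,3z,5z$ with $|z|=R/2$, chain the weak-doubling inequality twice, and use only that $w_0$ is decreasing to get $w_0(R)\le \mu(B)/|B|$ (since $B\subset B_R$) and $\mu(B'')/|B''|\le w_0(2R)$ (since $B''\subset\real^N\setminus B_{2R}$), yielding $w_0(R)\le K_1^2\,w_0(2R)$.

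Once $w_0$ is essentially constant on dyadic intervals, your entire Fubini-slicing apparatus is superfluous: Theorem~\ref{teo.ecoda.wdoubling} (the ``reverse, straightforward'' implication) immediately gives uniform weak doubling in all $\real^n$ for $n\ge N$. You even invoke that theorem yourself to mop up the ``finitely many dimensions left over'', which shows the slicing never did any work. So the corrected proof is: three-ball chain in $\real^N$ $\Rightarrow$ condition~(\ref{ecoda}) $\Rightarrow$ done by Theorem~\ref{teo.ecoda.wdoubling}.
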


\bigskip

\begin{proof} Assume that $\mu$ is weakly doubling with constant $K_1$ in $\real^N$ for some $N\in\mathbb N$. By Theorem \ref{teo.ecoda.wdoubling} we only need to show that $w$ is essentially constant over dyadic annuli. Given $R>0$, if $R\leq |x|\leq 2R$, since $w_0$ is decreasing, one has $w_0(2R)\leq w(x) \leq w_0(R)$. Thus, we only need to check that $w(R)\leq Cw(2R)$ with $C$ independent of $R$. To do so, consider a point $z\in\real^N$ so that $|z|=R/2$. Take $z'=3z$ and $z''=5z$ and consider the balls $B=B(z,R/2)$, $B'=B(z',R/2)$ and $B''=B(z'',R/2)$.

\bigskip

\begin{center}
\begin{tikzpicture}

\def\R{2.5}
\pgfmathsetmacro{\Rm}{\R/2}

\coordinate (O) at (0,0);
\coordinate (Z0) at (\Rm,0);
\coordinate (Z1) at (3*\Rm,0);
\coordinate (Z2) at (5*\Rm,0);
\coordinate (Y) at (0,\Rm);
\coordinate (Ya) at ($1.5*(Y)$);

\draw (0,-\Rm)--(0,\Rm);
\draw (-\Rm/2,0)--(6.5*\Rm,0);

\draw (Z0) circle (\Rm) (Z1) circle (\Rm) (Z2) circle (\Rm);

\fill (O) node [left,yshift=-3mm] {$0$} circle (0.5mm) (Z0) node [above] {$z$} circle (0.5mm) (Z1) node [above] {$z'$} circle (0.5mm) (Z2) node [above] {$z''$} circle (0.5mm) ;

\draw ($(Z0)+(Y)$) node [above] {$B$} ($(Z1)+(Y)$) node [above] {$B'$} ($(Z2)+(Y)$) node [above] {$B''$};

\begin{scope}
	\clip (Ya) -- ($(Ya)+(6.5*\Rm,0)$) -- ($(6.5*\Rm,0)-(Ya)$) -- ($(O)-(Ya)$) -- cycle;
	\draw [dashed] (O) circle (\R) (O) circle (2*\R) (O) circle (3*\R);
\end{scope}

\draw (\R,-1.2*\Rm) node [below,xshift=-4mm] {$R$} (2*\R,-1.2*\Rm) node [below,xshift=1mm] {$2R$} (3*\R,-1.2*\Rm) node [below,xshift=2mm] {$3R$};

\draw [decorate,decoration={brace,amplitude=3pt,raise=3pt},yshift=0pt]
($0.05*(O)+0.95*(Z0)$)--($0.95*(O)+0.05*(Z0)$) node [midway,below,xshift=0mm,yshift=-2mm] {$R/2$};

\end{tikzpicture}
\end{center}

\bigskip

By the weakly doubling property of $\mu$ we have that $\mu(B)\leq K_1 \mu(B') \leq K_1^2 \mu(B'')$. For all $y\in B$ we have $w_0(R)\leq w(y)$ and for all $y\in B''$ we have $w(y)\leq w_0(2R)$, this yields
\[
w_0(R) \leq \frac{\mu(B)}{|B|} \leq K_1^2\ \frac{\mu(B'')}{|B''|} \leq K_1^2\ w_0(2R).
\]
\end{proof}

\bigskip

Also, if the density is decreasing, the reciprocal implication in Theorem \ref{teorema.radiales} is true.

\bigskip

\begin{prop}\label{prop.reciproco.orejas}
Let $w_0$ be a decreasing function over $[0,\infty)$. Assume that there exists $C>0$ so that for some $N>0$ one has $\|M_\mu\|_{L_{\rad}^1(\real^n,d\mu)\rightarrow L_{\rad}^{1,\infty}(\real^n,d\mu)} \leq C$ for all $n\geq N$. Then $\mu$ is uniformly weakly doubling in $\real^n$ for all $n\geq N$. 
\end{prop}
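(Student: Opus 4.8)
The plan is to reduce, via Proposition \ref{prop.dec.unif.wdoubling}, to the statement that $\mu$ is weakly doubling in the single space $\real^N$ appearing in the hypothesis; once this is established, Proposition \ref{prop.dec.unif.wdoubling} upgrades it to uniform weak doubling for all $n\ge N$. Equivalently, by Theorem \ref{teo.ecoda.wdoubling} one may aim at proving that $w$ is essentially constant over dyadic annuli, which for a decreasing $w_0$ is nothing but the single inequality $w_0(R)\le \beta\, w_0(2R)$ for all $R>0$ (the finitely many dimensions between $N$ and the $N'$ produced by Theorem \ref{teo.ecoda.wdoubling} have then to be addressed separately, so in the end it comes down to the same kind of fixed‑dimension statement). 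Since $w_0$ is decreasing, for $R\le |x|\le 2R$ one has $w_0(2R)\le w(x)\le w_0(R)$, so weak doubling is essentially equivalent to a bound $w_0(R)\le K_1\, w_0(2R)$ with $K_1$ independent of $R$; I shall argue by contradiction, assuming that $w_0(R_j)/w_0(2R_j)\to\infty$ along some sequence $R_j$ and deriving a violation of the uniform weak‑type hypothesis.

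The tool is the weak‑type inequality tested against radial indicators. Taking $f=\chi_{B_R}$, which is radial, and using that $w_0$ is decreasing, one has for every $x$ the pointwise lower bound $M_\mu f(x)\ge \mu(B(x,2|x|))^{-1}\,\mu\bigl(B_R\cap B(x,2|x|)\bigr)$; since $B_R\subset B(x,2|x|)\subset B_{3|x|}$ whenever $|x|\ge R$, this gives $M_\mu f(x)\ge \mu(B_R)/\mu(B_{3|x|})$ for $|x|\ge R$. Choosing the level $\lambda$ appropriately, the superlevel set $\{M_\mu f>\lambda\}$ then contains an annulus $\{R\le |x|<\rho_\lambda\}$ with $\rho_\lambda$ determined by $\mu(B_{3\rho_\lambda})\approx \mu(B_R)/\lambda$, and the weak‑type inequality becomes a relation between $\mu$ of that annulus and $\mu(B_R)$. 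To turn this into a dimension‑free conclusion I pass to the limit $n\to\infty$ with Lemma \ref{lema.differentiation}: along a suitable sequence of dimensions the ratios $\mu(B_\rho)/|B_\rho|$ converge to $w_0(\rho)$, which allows one to rewrite everything in terms of the values of $w_0$ at the comparable scales $R,2R,3R,\dots$ and, after optimising $\lambda$, to recover the desired comparison $w_0(R)\lesssim w_0(2R)$.

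The main difficulty is exactly this last step. The crude lower bound for $M_\mu\chi_{B_R}$ only compares $\mu$ of concentric balls of different radii, and the volume factor $|B_{cR}|/|B_R|=c^{\,n}$ explodes with the dimension, swamping the information one wishes to extract. To avoid this one must use a sharper lower bound for $M_\mu$ on radial functions, one that reflects the fact that in high dimension a ball $B(x,R)$ with $|x|$ comparable to $R$ has $\mu$‑measure governed — through the angular profile $A_n(\rho)$ and the concentration $\varphi_n^{(|x|,R)}\longrightarrow\delta_{\sqrt{|x|^2+R^2}}$ exhibited in the proof of Lemma \ref{lema.differentiation} — by $w_0\bigl(\sqrt{|x|^2+R^2}\bigr)$ rather than by the measure of a concentric ball. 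Engineering the test function and the radius in the definition of $M_\mu$ so that the exponential angular factors in numerator and denominator cancel, leaving a clean comparison between $w_0(R)$ and $w_0(2R)$, is the heart of the argument and the point where the differentiation‑through‑dimensions viewpoint is essential.
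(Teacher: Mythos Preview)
Your outline has the right architecture --- test the weak-type bound against a radial function, feed the resulting inequality between ball measures into Lemma \ref{lema.differentiation}, and recover a comparison $w_0(R)\lesssim w_0(2R)$ --- but the key step is genuinely missing, and you say as much yourself. The test function $f=\chi_{B_R}$ together with the crude lower bound $M_\mu f(x)\ge \mu(B_R)/\mu(B_{3|x|})$ compares balls of \emph{different} radii, and as you correctly diagnose, the Lebesgue-volume ratio $|B_{3|x|}|/|B_R|$ blows up with $n$ and destroys the argument. Your final paragraph describes the problem and gestures at ``engineering the test function and the radius'' so that the angular factors cancel, but no such construction is carried out.

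The paper's resolution is much cleaner than the engineering you anticipate. Take instead $f=\chi_{B_\eps}$ with $\eps$ small (morally $f=\delta_0$). For $|z^n|=R$ one has $B_\eps\subset B(z^n,R+\eps)$, so every $x$ with $|x|\le R$ satisfies
\[
M_\mu f(x)\;\ge\;\frac{\mu(B_\eps)}{\mu(B(x,|x|+\eps))}\;\ge\;\frac{\mu(B_\eps)}{\mu(B(z^n,R+\eps))}\,,
\]
the last step using that $w_0$ is decreasing. Choosing $\lambda$ to be this last quantity, the weak-type inequality gives $\mu(B_R)\le C_\ast\,\mu(B(z^n,R+\eps))$, and letting $\eps\to 0$ yields $\mu(B_R)\le C_\ast\,\mu(B(z^n,R))$. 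The point is that now both balls have the \emph{same} radius $R$, so dividing by $|B_R|$ and sending $n\to\infty$ via Lemma \ref{lema.differentiation} gives $w_0(R)\le C_\ast\,w_0(\sqrt{2}\,R)$, hence $w_0(R)\le C_\ast^2\,w_0(2R)$. No delicate cancellation of angular profiles is needed; the whole trick is to arrange that the two balls being compared have equal radii, which is exactly what the Dirac-type test function accomplishes.
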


\bigskip

Observe that by Theorem \ref{teo.wdoublin.unif}, the hypothesis in the previous proposition also implies that $M_\mu$ is uniformly bounded on $L^p(\real^n,d\mu)$ for all $n\geq N$. It is remarkable that a boundedness condition over radial functions implies one for general functions. The proof of this Proposition is based on differentiation through dimensions.

\bigskip

\begin{proof}
Assume that $M_\mu$ satisfies the weak $L^1(\mu)$ inequality
\[
\mu\left(\{x\in\real^n: M_\mu f(x)>\lambda\}\right) \leq \frac {C_\ast}\lambda \int_{\real^n}|f(x)|\,d\mu(x),
\]
for all $f\in L_{\rad}^1(\real^n)$ and $n\geq N$ for certain $N\in\mathbb N$, with $C_\ast$ independent of $f$ and $n$. As before, it is enough to show that $w$ is essentially constant over dyadic annuli. Since $w_0$ is decreasing, it suffices to prove that $w_0(R)\leq C w_0(2R)$ with $C$ independent of $R$. Fix $R>0$ and for each $n\geq N$, take $z^n\in\real^n$ so that $|z^n|=R$. Set $f=\chi_{B_\eps}$ with $0<\eps<R$ and
\[
\lambda=\frac{\mu(B(z^n,R+\eps))}{\mu(B_\eps)}.
\]
Clearly we have $B_R\subset \{x\in\real^n:\ M_\mu f(x)>\lambda\}$, which together with our hypothesis gives
\[
\mu(B_R)\leq \frac {C_\ast}\lambda \int_{\real^n} |f(x)|\,d\mu(x) = C_\ast\ \mu(B(z^n,R+\eps)).
\]
Letting $\eps\rightarrow 0^+$, by monotonicity we obtain $
\mu(B_R)\leq C_\ast \mu(B(z^n,R))$. In particular we have
\[
\frac{\mu(B_R)}{|B_R|} \leq C_\ast \frac{\mu(B(z^n,R))}{|B(z^n,R)|},
\]
and now we differentiate through dimensions. Letting $n\rightarrow\infty$ in the last inequality, Lemma \ref{lema.differentiation} gives $w_0(R) \leq C_\ast w_0(\sqrt 2 R)$ from which we deduce that $w_0(R)\leq C w_0(2R)$ with $C=C_\ast^2$.
\end{proof}

\bigskip

We now make a connection between the previous hypotheses and the Muckenhoupt $A_1$ condition. We recall that for a positive $w\in L_{\loc}^1(\real^n)$ the $A_1$-constant we call $[w]_{A_1(\real^n)}$ is defined as the smallest constant $C$ so that
\[
\fint_{B(x,R)} w(y)\,dy \leq C \essinf_{y\in B(x,R)} w(y),
\]
for all $x\in \real^n$ and $R>0$. Under the hypothesis that $w_0$ is decreasing in $[0,\infty)$ and $d\mu(x)=w_0(|x|)\,dx$ is uniformly weakly doubling we will show that $w(x)=w_0(|x|)\in A_1(\real^n)$ for all sufficiently large $n$. Moreover, the $A_1$-constants can be bounded independently of $n$. The precise statement is the following.

\bigskip

\begin{prop}
Let $w_0$ be a decreasing function over $[0,\infty)$ and essentially constant over dyadic intervals with constant $\beta$. Then there exist $N>0$ and $C>0$ only depending on $\beta$ so that $[w]_{A_1(\real^n)}\leq C$ for all $n\geq N$.
\end{prop}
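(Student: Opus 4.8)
The plan is to bound $\fint_{B(x,R)} w(y)\,dy$ by a constant times $\essinf_{B(x,R)} w$ by splitting into the same three regimes used in the proof of Lemma \ref{lema.acotacion.teo.power}: the ball is far from the origin, at comparable distance, or contains the origin in its bulk. Fix $N$ large enough that Lemma \ref{lema.ecoda.hardy} applies (so $\mu(B_r)\le 2\beta\,w_0(r)|B_r|$ for all $r>0$) and take $n\ge N$. Since $w_0$ is decreasing, for any ball $B(x,R)$ the essential infimum of $w$ over $B(x,R)$ is attained at the farthest point from the origin, namely $\essinf_{B(x,R)} w = w_0(|x|+R)$.

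\emph{Case $|x|\ge 2R$.} Here every $y\in B(x,R)$ satisfies $|x|/2\le |y|\le 3|x|/2$, so by the essentially-constant-over-dyadic-annuli property $w(y)\le \beta^2 w_0(|x|)\le \beta^2 w_0(|x|+R)$ (the last step again by monotonicity, since $|x|+R\le \tfrac32|x|$ and $w_0$ is comparable on $[|x|,\tfrac32|x|]$). Hence $\fint_{B(x,R)} w \le \beta^2 \essinf_{B(x,R)} w$, with a possible extra factor $\beta$ to pass from $w_0(|x|)$ to $w_0(|x|+R)$.

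\emph{Case $R/2\le |x|\le 2R$ and case $|x|\le R/2$.} In both, $B(x,R)\subset B_{3R}$, so by Lemma \ref{lema.ecoda.hardy}, $\int_{B(x,R)} w \le \mu(B_{3R}) \le 2\beta\, w_0(3R)\,|B_{3R}| = 2\beta\,3^n\, w_0(3R)\,|B(x,R)|$. This gives $\fint_{B(x,R)} w \le 2\beta\,3^n\, w_0(3R)$, which is \emph{not} dimension-free yet. To fix this we must not integrate $w$ against the huge ball $B_{3R}$; instead, restrict the Hardy-type bound to the annular part. Note $B(x,R)\setminus B_{R/2}$ has volume at least $\tfrac12|B(x,R)|$, and on it $R/2\le |y|\le 3R$ (resp.\ $\le 5R/2$), so $w$ is comparable to $w_0(R)$ up to $\beta^2$; meanwhile the part inside $B_{R/2}$ is handled by $\mu(B_{R/2})\le 2\beta\,w_0(R/2)|B_{R/2}|\le \beta^2\,w_0(R)\,|B_R|$ via Lemma \ref{lema.ecoda.hardy}, which is now a dimension-free multiple of $w_0(R)|B(x,R)|$ because $B_{R/2}$ is concentric with $B(x,R)$ and $|B_{R/2}|/|B(x,R)|\le (R/2/R)^n\le 1$. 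Combining, $\fint_{B(x,R)} w \le C(\beta)\, w_0(R)$. Finally, since $|x|+R\le 5R/2$ and $w_0$ is comparable on $[R, 5R/2]$ (two dyadic steps give $w_0(R)\le \beta^2 w_0(5R/2)\le\beta^2 w_0(|x|+R)$), this is $\le C(\beta)\,\essinf_{B(x,R)} w$.

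\emph{Main obstacle.} The subtle point, and the only place real care is needed, is exactly the one flagged above: a naive use of Lemma \ref{lema.ecoda.hardy} on $B_{3R}$ costs a factor $3^n$, so one must split off the piece of $B(x,R)$ lying inside $B_{R/2}$ (handled by the concentric Hardy bound, whose volume ratio is $\le 1$) from the annular remainder (where $w$ is essentially constant), and only then recombine. Once this split is in place, $N$ and $C$ depend only on $\beta$ (through Lemma \ref{lema.ecoda.hardy} and a bounded number of dyadic-annulus comparisons), and monotonicity of $w_0$ identifies the essential infimum with the value at the outer radius, completing the argument that $[w]_{A_1(\real^n)}\le C$ uniformly for $n\ge N$.
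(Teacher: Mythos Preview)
Your argument is correct, but it takes a more roundabout path than the paper. In the regime $|x|\le 2R$ the paper exploits the decreasing hypothesis in a sharper way: for a radially decreasing weight one has $\mu(B(x,R))\le \mu(B_R)$ (the average of a radially decreasing function over a ball of fixed radius is maximized when the ball is centered at the origin), so
\[
\frac{\mu(B(x,R))}{|B(x,R)|}\le \frac{\mu(B_R)}{|B_R|}\le 2\beta\,w_0(R)\le 2\beta^3\,w_0(4R)\le 2\beta^3\inf_{B(x,R)}w,
\]
using Lemma~\ref{lema.ecoda.hardy}, two dyadic steps, and $B(x,R)\subset B_{4R}$. This disposes of both of your near-the-origin subcases in one line, with no annular splitting needed and no risk of a $3^n$ factor appearing in the first place. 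Your workaround---splitting off $B(x,R)\cap B_{R/2}$ and bounding it by $\mu(B_{R/2})\le 2\beta\,w_0(R/2)\,|B_{R/2}|$, then handling the annular remainder where $w$ is essentially constant---is perfectly valid and mirrors the decomposition in Lemma~\ref{lema.acotacion.teo.power} and Theorem~\ref{teo.ecoda.strong}; it simply does not use the monotonicity of $w_0$ as efficiently. (One small slip: $B_{R/2}$ and $B(x,R)$ are not concentric; what you actually need, and what holds, is just $|B_{R/2}|\le |B(x,R)|$.) In the regime $|x|\ge 2R$ both arguments coincide.
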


\bigskip

\begin{proof}
Assume that $w$ is essentially constant over dyadic intervals with constant $\beta$ and take $n\geq N$ for the $N$ given in Lemma \ref{lema.ecoda.hardy}. We have to find a constant $C>0$ so that for all $x\in\real^n$ and $R>0$ one has
\[
\frac{\mu(B(x,R))}{|B(x,R)|} \leq C\inf_{y\in B(x,R)} w(y).
\]
If $|x|\leq 2R$, since $w$ is decreasing and using Lemma \ref{lema.ecoda.hardy} and that $w$ is essentially constant over dyadic annuli one has
\[
\frac{\mu(B(x,R))}{|B(x,R)|}\leq \frac{\mu(B_R)}{|B_R|} \leq 2\beta\ w_0(R)\leq 2\beta^3\ w_0(4R).
\]
This finishes the proof in this case because now using again that $w_0$ is decreasing we obtain
\[
w_0(4R)\leq \inf_{y\in B_{4R}}w(y) \leq \inf_{y\in B(x,R)} w(y).
\]
If $|x|\geq 2R$ then $w$ is essentially constant over $B(x,R)$. Indeed if $y\in B(x,R)$ then $|x|/2\leq |y|\leq 3|x|/2$, which implies that $\beta^{-1}w(x)\leq w(y)\leq \beta\,w(x)$. Hence,
\[
\frac{\mu(B(x,R))}{|B(x,R)|} \leq \beta\ w(x) \leq \beta^2 \inf_{y\in B(x,R)} w(y).
\]
\end{proof}

\bigskip

The amazing thing here is that the reciprocal statement of this last proposition is also true.

\bigskip

\begin{prop}\label{prop.a1.decreciente}
Let $w_0$ be a non-negative function over $[0,\infty)$ and set $w(x)=w_0(|x|)$ so that $w\in A_1(\real^n)$ for $n\geq N>0$. If there exists $C_\ast>0$ so that $[w]_{A_1(\real^n)}\leq C_\ast$ for all $n\geq N$, then $w_0$ is essentially constant over dyadic intervals and comparable with a decreasing function.
\end{prop}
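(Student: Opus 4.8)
The plan is to feed the uniform $A_1$ bound into the differentiation principle of Lemma~\ref{lema.differentiation}. Since $w$ is radial, for $z^n\in\real^n$ with $|z^n|=s$ the map $y\mapsto|y|$ pushes Lebesgue measure on $B(z^n,R)$ forward to a measure on $((s-R)_+,s+R)$ equivalent to Lebesgue measure there, so $\essinf_{B(z^n,R)}w=\essinf_{(s-R)_+<t<s+R}w_0(t)$. The hypothesis $[w]_{A_1(\real^n)}\le C_\ast$ thus reads
\[
\fint_{B(z^n,R)}w_0(|y|)\,dm_n(y)\le C_\ast\,\essinf_{(s-R)_+<t<s+R}w_0(t),
\]
for all $n\ge N$ (note $w\in A_1(\real^N)$ forces $w_0\in L^1_{\loc}([0,\infty),t^{N-1}dt)$, so Lemma~\ref{lema.differentiation} applies). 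Letting $n\to\infty$ and invoking Lemma~\ref{lema.differentiation} produces a full-measure set $\mathcal T\subset(0,\infty)$ such that for every $T\in\mathcal T$ and every $s\ge0$, $R>0$ with $s^2+R^2=T^2$,
\[
w_0(T)\le C_\ast\,\essinf_{(s-R)_+<t<s+R}w_0(t).
\]
Two specializations will be used: $s=0$, $R=T$ gives $w_0(T)\le C_\ast\essinf_{0<t<T}w_0(t)$; and $s=R=T/\sqrt2$ gives $w_0(T)\le C_\ast\essinf_{0<t<\sqrt2\,T}w_0(t)$.

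For the comparison with a decreasing function I would use the first specialization. For $T\in\mathcal T$ it gives $w_0(T)\le C_\ast w_0(t)$ for a.e. $t\in(0,T)$; since $(0,\infty)\setminus\mathcal T$ is null, Fubini's theorem shows the planar set $\{(t,T):0<t<T,\ w_0(T)>C_\ast w_0(t)\}$ is null, and a second application of Fubini (slicing in $t$) yields that for a.e. $t>0$ one has $w_0(T)\le C_\ast w_0(t)$ for a.e. $T>t$. Setting $h(t)=\esssup_{s>t}w_0(s)$, this says $h(t)\le C_\ast w_0(t)$ a.e., so $h<\infty$ a.e. On the other hand $h$ is non-increasing by construction, and $w_0(t)\le h(t)$ a.e. since $w_0(t)=\lim_{\eps\to0^+}\frac1\eps\int_t^{t+\eps}w_0\le\esssup_{t<s<t+\eps}w_0(s)\le h(t)$ at every right Lebesgue point of $w_0$. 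Hence $C_\ast^{-1}h\le w_0\le h$ a.e., and $w_0$ is comparable with the decreasing function $h$.

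For the statement that $w_0$ is essentially constant over dyadic intervals I would use the second specialization. Fix $R>0$; for a.e. $T\in(R,\sqrt2\,R)$ one has $(R,\sqrt2\,R)\subset(0,\sqrt2\,T)$, so $w_0(T)\le C_\ast\essinf_{R<t<\sqrt2 R}w_0(t)$, and taking the essential supremum over such $T$ gives $\esssup_{R<t<\sqrt2 R}w_0(t)\le C_\ast\essinf_{R<t<\sqrt2 R}w_0(t)$ for every $R>0$. Now $(R,2R)$ is covered by the three intervals $I_1=(R,\sqrt2\,R)$, $I_2=(2^{1/4}R,2^{3/4}R)$, $I_3=(\sqrt2\,R,2R)$, each of the form $(a,\sqrt2\,a)$ and with $I_1\cap I_2\ne\emptyset$, $I_2\cap I_3\ne\emptyset$; on the nonempty open overlap of two consecutive $I_j$'s the just-proved bound forces the essential infima over those two intervals to differ by a factor at most $C_\ast$, hence $\essinf_{I_j}w_0$ and $\essinf_{I_k}w_0$ differ by a factor at most $C_\ast^2$ for all $j,k$. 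Therefore
\[
\esssup_{R<t<2R}w_0(t)=\max_j\esssup_{I_j}w_0\le C_\ast\max_j\essinf_{I_j}w_0\le C_\ast^3\min_j\essinf_{I_j}w_0=C_\ast^3\essinf_{R<t<2R}w_0(t),
\]
which is (\ref{ecoda}) with $\beta=C_\ast^3$.

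I expect the main obstacle to be not any single estimate but the bookkeeping needed to convert the pointwise-in-$T$ inequalities coming out of differentiation through dimensions into honest statements about $w_0$ as a locally integrable function: concretely, the two Fubini arguments in the second step and the interval-chaining in the third. Everything else reduces to plugging suitable radii into Lemma~\ref{lema.differentiation}.
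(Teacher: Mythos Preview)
Your argument is correct. Both your proof and the paper's hinge on feeding the uniform $A_1$ bound into Lemma~\ref{lema.differentiation}, but the routes diverge after that. The paper uses the $A_1$ condition in the form $Mw\le C_\ast w$ a.e.: centering a ball at $x_n$ with $|x_n|=s$ and radius $R=\sqrt{t^2-s^2}$, it passes to the limit to get $w_0(t)\le C_\ast w_0(s)$ for a.e.\ $s\le t$, which is already the comparability with a decreasing function. It then does \emph{not} argue the dyadic condition directly; instead it observes that the comparable decreasing $\tilde w$ inherits the $A_1$ property, hence the measure $\tilde w\,dx$ is doubling (so weakly doubling), and invokes Proposition~\ref{prop.dec.unif.wdoubling} together with Theorem~\ref{teo.ecoda.wdoubling} to conclude that $\tilde w$, and therefore $w$, is essentially constant on dyadic annuli. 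Your proof instead uses $A_1$ in the stronger pointwise form $\fint_B w\le C_\ast\essinf_B w$, takes a second specialization $s=R=T/\sqrt2$, and establishes the dyadic condition by a direct chaining over three overlapping subintervals of $(R,2R)$. The paper's route is shorter because it recycles earlier results of the section; yours is more self-contained and avoids the detour through Proposition~\ref{prop.dec.unif.wdoubling} and Theorem~\ref{teo.ecoda.wdoubling}, at the cost of the Fubini and interval-overlap bookkeeping you flagged.
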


\bigskip

\begin{proof}[Proof of Proposition \ref{prop.a1.decreciente}]
It is enough to prove that $w$ is comparable with a decreasing function $\tilde w$ because then $\tilde w\in A_1$ and, therefore, $\tilde w$ is (weakly) doubling. By Proposition \ref{prop.dec.unif.wdoubling} and Theorem \ref{teo.ecoda.wdoubling} we would have that $\tilde w$ is essentially constant over dyadic intervals, and hence so is $w$. 

\bigskip

It suffices to check that there is a constant $q>0$ so that $w_0(t)\leq q\,w_0(s)$ whenever $0\leq s\leq t$. For $0<s<t$ and $n\geq N$, take $x_n,y_n\in \real^n$ so that $x_n$ is a multiple of $y_n$ and $|x_n|=s$, $|y_n|=t$. Consider the ball $B(x_n,R)$ with $R^2=t^2-s^2$. 

\begin{center}
\begin{tikzpicture}
\def\s{1.3}
\def\t{2}
\pgfmathsetmacro{\Rm}{sqrt(\t*\t-\s*\s)}

\coordinate (O) at (0,0);
\coordinate (X) at (\s,0);
\coordinate (Y) at (\t,0);
\coordinate (T) at (\s-1,0);
\coordinate (F) at (\s+\Rm+1,0);
\coordinate (A) at (0,\Rm+1);
\coordinate (B) at (0,\Rm);

\draw (-1,0)--(F);
\draw (X) circle (\Rm);

\begin{scope}
	\clip ($(T)+(A)$) -- ($(F)+(A)$) -- ($(F)-(A)$) -- ($(T)-(A)$) -- cycle;
	
	\draw [dashed] (O) circle (\t);
\end{scope}

\draw [dotted] ($(X)+(B)$)--($(X)-(B)$);
\draw [decorate,decoration={brace,amplitude=4pt,raise=3pt},yshift=0pt]
($0.1*(X)+0.9*($(X)-(B)$)$)--($0.9*(X)+0.1*($(X)-(B)$)$) node [midway,left,xshift=-2mm,yshift=0mm] {$R$};

\fill (O) node [left,yshift=-3mm] {$0$} circle (0.5mm) (X) node [above,xshift=-3mm] {$x_n$} circle (0.5mm) (Y) node [above,xshift=3mm] {$y_n$} circle (0.5mm);
\end{tikzpicture}
\end{center}

By hypothesis we have, in the almost everywhere sense,
\[
\frac{\mu(B(x_n,R))}{|B(x_n,R)|}\leq C_\ast\ w(x_n)= C_\ast\ w_0(s).
\]
Letting $n\rightarrow\infty$, by the differentiation through dimensions of Lemma \ref{lema.differentiation} we get that
\[
w_0(t)=w_0(\sqrt{s^2+R^2})\leq C_\ast\ w_0(s),
\]
for almost every $s\leq t$.
\end{proof}

\bigskip

\section{Further results and remarks}\label{sect.further}

\bigskip

\subsection{Examples of measures that are not uniformly weakly-doubling}\label{sect.ejemplo}

\bigskip

Each measure $\mu$ with density $w(x)=|1-|x||^{-\alpha}$ for $0<\alpha<1$ is doubling on $\real^n$ for each $n\in\mathbb N$, but is not uniformly weakly doubling. To see this observe that in each $\real^n$ one has $\mu(B_1)\geq C_\alpha n^\alpha |B_1|$. Using differentiation through dimensions, if $|z|=1$, then $\mu(B(z,1))/|B(z,1)|\tiende^{n\rightarrow\infty} w(\sqrt{2})=(\sqrt2-1)^{-\alpha}$. This says that the measures of the intersecting balls $B_1$ and $B(z,1)$ are not comparable with constants independent of the dimension.

\bigskip

Using this idea we can also prove that there are not uniform weak $L^1(\mu)$ bounds for the associated maximal operator.

\bigskip

\begin{teo}\label{teo.weak.l1.no.unif}
Let $\mu$ be the measure defined above and let $C_{1,n}$ be the smallest constant satisfying
\[
\mu(\{x\in\real^n:M_\mu f(x)>\lambda\})\leq \frac{C_{1,n}}\lambda\|f\|_{L^1(\real^n,\mu)},
\]
for all $f\in L^1(\real^n,\mu)$. Then, one has $C_{1,n}\geq cn^\alpha$.
\end{teo}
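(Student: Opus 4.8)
The plan is to exhibit, for each $n \ge N$, a single test function $f$ for which the weak $(1,1)$ inequality forces the constant $C_{1,n}$ to be at least $cn^\alpha$. The natural candidate is $f = \chi_{B_\eps}$ for small $\eps > 0$, exactly as in the proof of Proposition \ref{prop.reciproco.orejas}: if $\lambda = \mu(B_{1+\eps})/\mu(B_\eps)$ then the average of $f$ over $B_{1+\eps}$ equals $\lambda$, so $B_1 \subset \{M_\mu f > \lambda'\}$ for every $\lambda' < \lambda$; letting $\eps \to 0^+$ and using monotonicity of $\mu$ on balls centered at the origin, the weak type inequality yields $\mu(B_1) \le C_{1,n}\,\mu(B_1)$, which is useless. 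So instead I would center the competing ball away from the origin: take $z \in \real^n$ with $|z| = 1$ and compare $\mu(B_1)$ with $\mu(B(z,R))$ for a suitable radius $R$, forcing the maximal function to be large on $B(z,R)$ while $f$ is supported near the origin.

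More precisely, first I would establish the lower bound $\mu(B_1) \ge c_\alpha\, n^\alpha\, |B_1|$ in $\real^n$. This comes from integrating in polar coordinates: $\mu(B_1) = \omega_{n-1}\int_0^1 |1-t|^{-\alpha} t^{n-1}\,dt$, and the contribution of the shell $t \in [1-1/n, 1]$ is of order $n^{\alpha}\cdot n^{-1}\cdot$ (something like $e^{-1}$), while $|B_1| = \omega_{n-1}/n$; comparing the two gives the claimed factor $n^\alpha$ up to an absolute constant. (This is the same computation sketched in Section \ref{sect.ejemplo}.) Second, I would pick a point $z$ with $|z| = 1$ and a radius $R$ of order $1$ (e.g. $R = 1$, or $R$ small but fixed) such that $B_\eps \subset B(z, R + \eps)$ and $B(z,R) \subset \{x : M_\mu(\chi_{B_\eps})(x) > \mu(B(z,R+\eps))/\mu(B_\eps)\}$; applying the hypothesized weak type inequality gives $\mu(B(z,R)) \le C_{1,n}\, \mu(B(z,R+\eps))$, and letting $\eps \to 0^+$ yields $\mu(B(z,R)) \le C_{1,n}\, \mu(B(z,R))$ — again circular. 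The fix is to instead test with $f = \chi_{B_\eps}$ but bound the level set from below by $B_1$ (not by the ball around $z$), using that the average of $|f|$ over $B(x, |x| + \eps) \supset B_\eps$ controls $M_\mu f(x)$ from below for every $x$ with $|x| = 1$. Then the weak inequality with $\lambda$ slightly below $\mu(B_\eps)/\mu(B(z, 1+\eps))$ reads $\mu(B_1) \le \frac{C_{1,n}}{\lambda}\mu(B_\eps) = C_{1,n}\,\mu(B(z,1+\eps))$, and as $\eps \to 0^+$, by differentiation through dimensions (Lemma \ref{lema.differentiation}) the right-hand side is comparable to $w_0(\sqrt 2)\,|B_1| = (\sqrt 2 - 1)^{-\alpha}|B_1|$ for $n$ large, while the left-hand side is $\ge c_\alpha n^\alpha |B_1|$. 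Dividing by $|B_1|$ gives $C_{1,n} \ge c n^\alpha$.

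The step I expect to require the most care is matching the geometry so that $B_1$ genuinely sits inside the level set $\{M_\mu f > \lambda\}$ with the right $\lambda$: one needs that for every $x \in B_1$ the ball $B(x, r)$ with $r = |x| + \eps$ (or some admissible radius) contains $B_\eps$ and has $\mu$-measure at most $\mu(B(z, 1+\eps))$, where $z$ is the ``worst'' point at distance $1$. Since $\mu$ is radial but not radially monotone (the density blows up on the sphere $|x| = 1$), I would either use the weakly-doubling-type comparison of balls of equal radius on the relevant scale, or simply note that $B(x,r) \subset B(0, 2)$ for $x \in B_1$ and $r \le 1 + \eps$, so $\mu(B(x,r)) \le \mu(B_2)$, and arrange the bookkeeping so that the factor lost is an absolute constant independent of $n$; this is harmless because we only seek the growth rate $n^\alpha$. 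The rest — the polar-coordinate lower bound for $\mu(B_1)$ and the application of Lemma \ref{lema.differentiation} to evaluate the limiting density at $\sqrt 2$ — is routine given the tools already developed in the paper.
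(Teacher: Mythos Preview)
Your overall strategy coincides with the paper's: test with $f=\chi_{B_\eps}$ (the paper passes directly to $\delta_0$ via discretization), show that $B_1$ lies in the level set $\{M_\mu f>\lambda\}$ for $\lambda$ just below $\mu(B_\eps)/\mu(B(z,1+\eps))$ with $|z|=1$, deduce $C_{1,n}\ge \mu(B_1)/\mu(B(z,1))$, and finish with the polar--coordinate lower bound $\mu(B_1)\ge c_\alpha n^\alpha|B_1|$ together with Lemma~\ref{lema.differentiation} to control the denominator.

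There is, however, a real gap precisely at the step you flag, and neither of your proposed fixes works. The weakly--doubling comparison is inapplicable because the balls $B(x,|x|+\eps)$, $x\in B_1$, have radii ranging over the whole interval $(\eps,1+\eps)$, not a single radius; and even on a fixed radius the weakly--doubling constant of this $\mu$ in $\real^n$ is itself of order $n^\alpha$ (this is exactly what the example is built to display), so it cannot be absorbed. The fallback $\mu(B(x,r))\le\mu(B_2)$ fails more dramatically: $\mu(B_2)$ is \emph{not} bounded by a dimension--free constant times $|B_1|$. In polar coordinates the annulus $\{1<|y|<2\}$ contributes
\[
\omega_{n-1}\int_1^2 (t-1)^{-\alpha}t^{n-1}\,dt=\omega_{n-1}\int_0^1 u^{-\alpha}(1+u)^{n-1}\,du\ \ge\ c_\alpha\,\omega_{n-1}\Bigl(\tfrac32\Bigr)^{n-1},
\]
which is exponentially larger than $|B_1|=\omega_{n-1}/n$. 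Thus this route yields only $C_{1,n}\ge \mu(B_1)/\mu(B_2)\to 0$, which is useless.

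The missing observation, which is what the paper invokes when it says that $M_\mu\delta_0(x)=1/\mu(B(x,|x|))$ is radially decreasing, is elementary nesting: if $x_1=tx_2$ with $0<t\le 1$, then for $y\in B(x_1,|x_1|+\eps)$ one has
\[
|y-x_2|\le|y-x_1|+|x_1-x_2|<(|x_1|+\eps)+(|x_2|-|x_1|)=|x_2|+\eps,
\]
so $B(x_1,|x_1|+\eps)\subset B(x_2,|x_2|+\eps)$. Hence $\mu(B(x,|x|+\eps))$ is increasing in $|x|$, and for every $x\in B_1$ one has $\mu(B(x,|x|+\eps))<\mu(B(z,1+\eps))$, giving $M_\mu f(x)>\mu(B_\eps)/\mu(B(z,1+\eps))$. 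With this one line in place your outline is complete and matches the paper's proof.
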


\bigskip

\begin{proof}
Using discretization (see \cite{Guzman} and \cite{MenarguezSoria}), or the argument in the proof of Proposition \ref{prop.reciproco.orejas}, we may consider $M_\mu$ acting over finite sums of Dirac deltas instead of integrable functions. For $\delta_0 =\lim_{\eps\rightarrow0}\chi_{B_\eps}(x)/\mu(B_\eps)$, in the sense of distributions, the weak $L^1(\mu)$ inequality reads
\begin{equation}\label{desigualdad.delta}
\mu(\{x\in\real^n:M_\mu \delta_0(x)>\lambda\})\leq \frac{C_{1,n}}\lambda.
\end{equation}
Note that $M_\mu\delta_0(x)=1/\mu(B(x,|x|))$ for each $x\neq 0$, which makes $M_\mu\delta_0$ a radially decreasing function. Then taking $\lambda=M_\mu\delta_0(z)$ with $|z|=1$ we have $\{x\in\real^n: M_\mu \delta_0(x)> M_\mu\delta_0(z)\}\subset B_1$ and from (\ref{desigualdad.delta}) we obtain
\[
C_{1,n}\geq \frac{\mu(B_1)}{\mu(B(z,1))}.
\]
This, together with the previous observation that $\mu(B_1) \geq C n^\alpha |B_1|$ and that $\mu(B(z,1))/|B_1| \rightarrow \ w(\sqrt2)$ as $n\rightarrow\infty$, proves the result.
\end{proof}

\bigskip

\begin{xrem}One can prove with some extra work that $\mu$ is uniformly $n$-micro-\-doubling. It is also weakly doubling but in this case with a constant $K_1 \sim n^\alpha$ in each $\real^n$. In particular, Theorems \ref{teo.naor.tao} and \ref{teo.weak.l1.no.unif} imply that $ C_{1,n} \leq c\,n^{1+\alpha}\log n$.\end{xrem}

\bigskip

\subsection{Families of measures changing with the dimension}\label{sect.contraejemplo}

\bigskip

As we have seen in the preceding sections, maximal operators associated with measures given by power densities have $L^p$ operator norms bounded with respect to the dimension. An interesting observation by J.M. Aldaz and J. P\'erez L\'azaro in \cite{AldazPerezLazaro} showed that given an exponent $p$ (as large as wanted) there exist families of power weights such that the $L^p$ bounds of the associated maximal operators grow to infinity as $n\longrightarrow\infty$. The twist here is that the powers change from one dimension to another. To be more precise they considered measures $\nu_{\alpha,n}$ given by the densities $|x|^{-\alpha n}$ over $\real^n$ with $0<\alpha<1$. Their result is the following (see Theorem 3.12 in \cite{AldazPerezLazaro}).

\bigskip

\begin{teo}\label{teorema.medidas.doblantes1}
Given $p_0\in[1,\infty)$, there exist $\alpha_0 \in (0,1)$ and $a>1$ such that for all $p\in[1,p_0]$ and all $\alpha\in[\alpha_0,1)$ one has
\[
c_{\nu_{\alpha,n},p}\geq \frac{a^{(1-\alpha)n}}6.
\]
\end{teo}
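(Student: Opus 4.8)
The plan is to reduce the whole statement to one sharp geometric fact: for $\nu=\nu_{\alpha,n}$, a Euclidean ball passing through the origin has exponentially (in $n$) smaller $\nu$-measure than the concentric ball of the same radius. First I would record $\nu(B_R)=\frac{\omega_{n-1}}{(1-\alpha)n}R^{(1-\alpha)n}$ (so $\nu(tE)=t^{(1-\alpha)n}\nu(E)$; near the origin $\nu$ behaves $(1-\alpha)n$-dimensionally), and then, for $|x|=\rho$ and $B(x,\rho)$ tangent to $0$, integrate in polar coordinates about the origin — using that $B(x,\rho)=\{t\theta:0\le t<2\rho\cos\gamma,\ \cos\gamma>0\}$ where $\gamma$ is the angle to $x$ — to get
\[
\nu(B(x,\rho))=\frac{\omega_{n-2}}{(1-\alpha)n}\,(2\rho)^{(1-\alpha)n}\int_0^{\pi/2}(\cos\gamma)^{(1-\alpha)n}(\sin\gamma)^{n-2}\,d\gamma,
\]
the last integral being $\tfrac12 B\!\big(\tfrac{n-1}{2},\tfrac{(1-\alpha)n+1}{2}\big)$. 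By Stirling this is $\asymp n^{-1/2}q(\alpha)^n$ with $q(\alpha)=(2-\alpha)^{-1/2}\big(\tfrac{1-\alpha}{2-\alpha}\big)^{(1-\alpha)/2}$, and the elementary point I would isolate is that $2^{1-\alpha}q(\alpha)<1$ for every $\alpha\in(0,1)$: the function $\psi(\alpha):=\log\!\big(2^{1-\alpha}q(\alpha)\big)$ vanishes at $\alpha=0$ and $\alpha=1$ and has $\psi'(\alpha)=-\log2+\tfrac12\log\tfrac{2-\alpha}{1-\alpha}$, which is increasing and changes sign exactly once, so $\psi<0$ on $(0,1)$. Putting $c_\alpha:=-\psi(\alpha)>0$, this yields $\nu(B(x,|x|))\le C\,e^{-c_\alpha n}\nu(B_{|x|})$, and the same bound (after absorbing a fixed multiple into the exponent) for any ball of radius comparable to $|x|$ containing a fixed ball about the origin. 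I would also note that $c_\alpha/(1-\alpha)\to+\infty$ as $\alpha\to1^-$, by L'Hôpital, since it has limit $\psi'(\alpha)\to+\infty$.

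For $p=1$ I would run exactly the argument of Theorem~\ref{teo.weak.l1.no.unif}: discretise and test the weak $(1,1)$ inequality on $f=\delta_0$. Since $M_\nu\delta_0(x)=1/\nu(B(x,|x|))$ is radially decreasing, the thin-ball bound gives $\nu(\{M_\nu\delta_0>\lambda\})\gtrsim e^{c_\alpha n}/\lambda$, so $c_{\nu_{\alpha,n},1}\gtrsim e^{c_\alpha n}$. Fixing any $\alpha_0\in(0,1)$ and setting $a:=\inf_{\alpha\in[\alpha_0,1)}e^{\,c_\alpha/(1-\alpha)}$ — which exceeds $1$ because $c_\alpha/(1-\alpha)$ is continuous, positive, and tends to $+\infty$ at $\alpha=1$ — one gets $c_{\nu_{\alpha,n},1}\ge a^{(1-\alpha)n}/6$ for large $n$, and trivially ($c_{\nu_{\alpha,n},1}\ge1$) for the remaining $n$ after slightly shrinking $a$.

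For $1<p\le2$ I would keep the strategy but test on $f=\chi_{B_\epsilon}$ with $\epsilon/R$ a fixed ratio depending only on $\alpha$. For $\epsilon\le|y|=\rho\le R$ the minimal ball centred at $y$ containing $B_\epsilon$ has $\nu$-measure at most a constant times $\min\{e^{-c_\alpha n}\nu(B_\rho),\ \nu(B_{\sqrt{2\rho\epsilon}})\}$ (the ``polar'' thin-ball contribution and the ``equatorial'' one), so $M_\nu(\chi_{B_\epsilon})(y)\gtrsim\lambda$ on the annulus $\{\epsilon\le|y|\le R\}$, with $\lambda$ a suitable power of $\epsilon/R$ and the annulus of $\nu$-measure $\ge\tfrac12\nu(B_R)$ for large $n$. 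Feeding this into the weak $(p,p)$ inequality gives, after cancelling the dimensional constants,
\[
c_{\nu_{\alpha,n},p}^{\,p}\ \gtrsim\ \frac{\lambda^{p}\,\nu(B_R)}{\nu(B_\epsilon)}\ \gtrsim\ \frac{1}{\text{poly}(n)}\,\Big[\big(\tfrac{\epsilon}{R}\big)^{(1-\alpha)(p-1)}e^{c_\alpha p}\Big]^{n},
\]
and — precisely because $e^{-c_\alpha}=2^{1-\alpha}q(\alpha)<1$ — one may choose $(\epsilon/R)^{1-\alpha}$ in the nonempty interval $\big(e^{-c_\alpha p_0/(p_0-1)},\,e^{-c_\alpha}\big)$ so that the bracketed base exceeds $1$ simultaneously for all $p\in[1,p_0]$; tracking the $p$-dependence as in the case $p=1$ then gives $c_{\nu_{\alpha,n},p}\ge a^{(1-\alpha)n}/6$ with $a>1$ and $\alpha_0$ depending only on $p_0$.

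The part I expect to cost real work is the range $p\ge2$: the square root in the equatorial radius $\sqrt{2\rho\epsilon}$ makes a single test ball insufficient, since the bracketed base above can no longer be pushed above $1$. The remedy — the technical heart of Aldaz and Pérez Lázaro's Theorem~3.12 — is to superpose the construction over a geometric sequence of scales, taking $f$ a suitably weighted sum $\sum_j\chi_{B(y_j,\epsilon_j)}$ with $\epsilon_j\downarrow0$ (the $y_j$ arranged so that one large thin ball about the origin sees a definite fraction of the mass at every scale), so that the $p$-losses telescope rather than accumulate. Carrying out this bookkeeping while keeping all constants dimension-free and the final exponent exactly $(1-\alpha)n$ is the main obstacle, and I would model it on the self-similar sharpness constructions in the references, deferring the details to \cite{AldazPerezLazaro}.
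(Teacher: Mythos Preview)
The paper does not prove this theorem at all: it is quoted verbatim as Theorem~3.12 of Aldaz and P\'erez L\'azaro \cite{AldazPerezLazaro}, with the attribution ``Their result is the following (see Theorem 3.12 in \cite{AldazPerezLazaro})'' and no argument given. There is therefore nothing in the paper to compare your proposal against; you are sketching a proof of a cited external result, and indeed you yourself acknowledge this by deferring the ``technical heart'' to \cite{AldazPerezLazaro} at the end.

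As a separate remark on the content of your sketch: the $p=1$ case via $\delta_0$ and the thin-ball estimate $\nu(B(x,|x|))\lesssim e^{-c_\alpha n}\nu(B_{|x|})$ is sound and in the spirit of Theorem~\ref{teo.weak.l1.no.unif}. But your treatment of $1<p\le p_0$ is incomplete in exactly the place you flag: for $p\ge 2$ you admit the single-scale test fails and propose a multiscale construction whose details you do not supply. Since the statement is for \emph{all} $p\in[1,p_0]$ with $p_0$ arbitrary, this is not a peripheral case but the main one, and without it the proposal is only a partial outline. Note also a small inconsistency: in the $p=1$ paragraph you say ``fixing any $\alpha_0\in(0,1)$'' suffices, whereas the theorem asserts the existence of $\alpha_0$ depending on $p_0$; for $p=1$ alone any $\alpha_0$ does work, but once you pass to $p>1$ the choice of $\alpha_0$ must be tied to $p_0$, and your later paragraph does not make clear how.
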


\bigskip

It is implicit in the proof given in \cite{AldazPerezLazaro} that $\alpha_0\longrightarrow1$ as $p_0\longrightarrow\infty$. This leads to the question of whether, fixing $\alpha$, $M_{\nu_{\alpha,n}}$ may satisfy a uniform $L^p$ bound for large $p$. We can apply the method used in Theorem \cite{CriadoSjogren} for the Gaussian measure to show that this is not the case when $\alpha>1/2$.

\bigskip

\begin{teo}\label{teo.alphas}
For each $\alpha\in(1/2,1)$ there exists a constant $a>1$ such that for all $p\in[1,\infty)$
\[
c_{\nu_{\alpha,n},p}\geq ca^{n/p},
\]
even if the action is restricted to radially decreasing functions.
\end{teo}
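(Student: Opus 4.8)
The plan is to test the weak type $(p,p)$ inequality defining $c_{\nu_{\alpha,n},p}$ against a radially decreasing function and to exploit the way the measure $d\nu_{\alpha,n}=|x|^{-\alpha n}\,dx$ concentrates on balls passing through, or very close to, the origin; this is the same circle of ideas used for the Gaussian measure in \cite{CriadoSjogren} and, in the case $p=1$, in the proof of Theorem~\ref{teo.weak.l1.no.unif}. First I would reduce, via the discretization of \cite{Guzman}, \cite{MenarguezSoria}, to elementary test functions, and then use the homogeneity of $|x|^{-\alpha n}$ to normalize matters so that the natural test object is $f=\chi_{B_1}$, which is genuinely radially decreasing and lies in every $L^p(\nu_{\alpha,n})$ because $\alpha<1$. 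The point of this choice is that $\|f\|_{L^p(\nu_{\alpha,n})}^{p}=\nu_{\alpha,n}(B_1)=\omega_{n-1}/(n(1-\alpha))$ does \emph{not} depend on $p$, which is exactly what will make the final estimate uniform in $p$.

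The computational core is to estimate, up to factors that are polynomial in $n$, the measure $\nu_{\alpha,n}(B(x,r))$ of an off-centre ball in terms of $|x|$ and $r$. Writing $\beta=1-\alpha$ and integrating in polar coordinates about the origin, one is reduced to one-dimensional integrals $\int t^{\,n\beta-1}A_n(t)\,dt$, where $A_n(t)$ measures the spherical cap that $B(x,r)$ cuts out of $\{|y|=t\}$; since in high dimension this cap measure is controlled by powers $(\sin\psi(t))^{\,n}$ with $\cos\psi(t)=(t^2+|x|^2-r^2)/(2|x|\,t)$, a Laplace/Beta--function analysis pins down the exponential rate. For the ball through the origin, $r=|x|=\sigma$, this gives $\nu_{\alpha,n}(B(x,\sigma))\approx(n\cdot\mathrm{poly})^{-1}\,\omega_{n-2}\,(2\sigma)^{n\beta}\rho_0^{\,n}$, with $\rho_0=\bigl(\beta^{\beta}/(1+\beta)^{1+\beta}\bigr)^{1/2}$, and the integrand producing this measure concentrates around $t\approx 2\sigma\sqrt{\beta/(1+\beta)}$.

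With these asymptotics in hand I would locate a large level set: for $|x|$ in an annulus $\{1\le|x|\le\sigma\}$ with $\sigma$ a fixed constant $>1$, a carefully selected averaging ball $B(x,r_x)$ yields $M_{\nu_{\alpha,n}}f(x)\ge(\mathrm{poly})^{-1}\Lambda^{\,n}$ for a base $\Lambda=\Lambda(\alpha)>1$, while this annulus has $\nu_{\alpha,n}$-measure $\approx\sigma^{n\beta}\,\nu_{\alpha,n}(B_1)$. Inserting $\lambda$ of order $\Lambda^{n}$, the function $f$, and this level set into
\[
\nu_{\alpha,n}\bigl(\{M_{\nu_{\alpha,n}}f>\lambda\}\bigr)\le c_{\nu_{\alpha,n},p}^{\,p}\,\lambda^{-p}\,\|f\|_{L^p(\nu_{\alpha,n})}^{p},
\]
and solving for $c_{\nu_{\alpha,n},p}$, one gets $c_{\nu_{\alpha,n},p}\ge(\mathrm{poly})^{-1}\bigl(\Lambda\,\sigma^{\beta/p}\bigr)^{n}$; since $\Lambda>1$ this exponential dominates the polynomial loss, so it can be rewritten as $c_{\nu_{\alpha,n},p}\ge c\,a^{n/p}$ with $c>0$ and $a=a(\alpha)>1$ (one may take $a=\sigma^{\beta}$) both independent of $p$.

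The main obstacle is the production of a base $\Lambda>1$ valid on an annulus reaching outside $B_1$. The averaging ball through the origin, together with the rate $\rho_0$ above, already produces such a $\Lambda$ when $\alpha>2/3$; to cover the whole range $\alpha>1/2$ one must optimize jointly over the outer radius $\sigma$ of the annulus, the support radius of $f$, and the averaging radii $r_x$, and then control the resulting Beta--function quotients uniformly in $n$. The calculation is arranged so that the base exceeds $1$ precisely when $\alpha>1/2$: for $\alpha\le 1/2$ the portion of $B_1$ that an admissible ball centred outside $B_1$ can "see" is exponentially smaller than what the argument requires, and the construction collapses as $p\to\infty$ — which is why in \cite{AldazPerezLazaro} the near-optimal exponents $\alpha$ appeared only for bounded ranges of $p$. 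The discretization reduction, the scaling normalization and the interpolation of the $1/n$ polynomial factor into the exponential are routine.
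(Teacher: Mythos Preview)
The paper does not actually prove this theorem: after the statement it says only that ``The proof of this result can be found in \cite{tesis} and in \cite{APL}.'' So there is no detailed argument in the paper to compare against. The one thing the paper does indicate is that the method is that of \cite{CriadoSjogren} for the Gaussian measure, and your overall strategy --- test the weak $(p,p)$ inequality on $f=\chi_{B_1}$, use Laplace-type concentration of $\nu_{\alpha,n}$ on off-centre balls, and read off an exponential lower bound --- is precisely the intended one.

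Your write-up contains a concrete error, though. You assert that on the annulus $\{1\le|x|\le\sigma\}$ one has $M_{\nu_{\alpha,n}}\chi_{B_1}(x)\ge(\mathrm{poly})^{-1}\Lambda^{n}$ with a base $\Lambda>1$. That is impossible: the maximal function of a characteristic function is pointwise bounded by $1$. What your own Laplace computation actually gives (for $\alpha>2/3$, with the ball through the origin and $\sigma<\tfrac12\sqrt{(1+\beta)/\beta}$) is that the integrand defining $\nu_{\alpha,n}(B(x,|x|))$ concentrates at a radius $t_*<1$, hence $\nu_{\alpha,n}(B(x,|x|)\cap B_1)/\nu_{\alpha,n}(B(x,|x|))=1-O(e^{-cn})$. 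So the correct lower bound is $M_{\nu_{\alpha,n}}\chi_{B_1}(x)\ge c_0$ for a \emph{fixed constant} $c_0\in(0,1)$ --- effectively $\Lambda=1$, not $\Lambda>1$. With that in hand, the weak $(p,p)$ inequality yields
\[
c_{\nu_{\alpha,n},p}^{\,p}\ \ge\ c_0^{\,p}\,\frac{\nu_{\alpha,n}(\{1\le|x|\le\sigma\})}{\nu_{\alpha,n}(B_1)}\ =\ c_0^{\,p}\,(\sigma^{n\beta}-1),
\]
so $c_{\nu_{\alpha,n},p}\ge c_0\,(\sigma^{n\beta}-1)^{1/p}\ge c\,\sigma^{n\beta/p}$. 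There is no polynomial loss to absorb, and your sentence ``since $\Lambda>1$ this exponential dominates the polynomial loss'' is both incorrect and unnecessary. The only genuine gap is the one you yourself flag: pushing the range down from $\alpha>2/3$ to $\alpha>1/2$ by optimizing over the averaging radii $r_x$. That optimization is the heart of the extension and you have not carried it out; until it is, the argument for $1/2<\alpha\le 2/3$ remains a plan rather than a proof.
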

The proof of this result can be found in \cite{tesis} and in \cite{APL}.

\bigskip

A consequence of this result is that for these families of measures the constants of the $n$-micro-doubling and the weak doubling conditions grow to infinity with the dimension (see Theorems \ref{teo.naor.tao}, \ref{teorema.radiales} and \ref{teo.power.lp}).

\bigskip


\begin{thebibliography}{00}
\bibitem{Aldaz} J.M. Aldaz, \emph{Dimension dependency of the weak type (1,1) bounds for maximal functions associated to finite radial measures.} Bull. Lond. Math. Soc. \textbf{39} (2007), 203--208.
\bibitem{Aldaz2} J.M. Aldaz, \emph{The weak type (1,1) bounds for the maximal function associated to cubes grow to infinity with the dimension.} Ann. of Math. (2) \textbf{173} (2011), no. 2, 1013-1023.
\bibitem{AldazPerezLazaro} J.M. Aldaz, J. P\'erez L\'azaro, \emph{Dimension dependency of $L^p$ bounds for maximal functions associated to radial measures.} Positivity \textbf{15} (2011), 199--213.
\bibitem{APL} J.M. Aldaz, J. P\'erez L\'azaro, \emph{On high dimensional maximal operators.}  Banach J. Math. Anal. \textbf{7} (2013), no. 2, 225--243.
\bibitem{AldazPerezLazaro2} J.M. Aldaz, J. P\'erez L\'azaro, \emph{The best constant for the centered maximal operator on radial decreasing functions.} Math. Inequal. Appl. \textbf{14} (2011), no. 1, 173--179.
\bibitem{Aubrun} G. Aubrun, \emph{Maximal inequality for high-dimensional cubes.} Confluentes Math. \textbf{1} (2009), no. 2, 169--179.
\bibitem{Bourgain1} J. Bourgain, \emph{On high-dimensional maximal function associated to convex bodies.} Amer. J. Math. \textbf{108} (1986), no. 6, 1467--1476.
\bibitem{Bourgain2} J. Bourgain, \emph{On the $L\sp p$-bounds for maximal functions associated to convex bodies in $R\sp n$.} Israel J. Math. \textbf{54} (1986), no. 3, 257--265.
\bibitem{Bourgain3} J. Bourgain, \emph{On dimension free maximal inequalities for convex symmetric bodies in $\real^n$.} Geometrical aspects of functional analysis (1985/86), 168--176, Lecture Notes in Math., \textbf{1267}, Springer, Berlin, 1987.
\bibitem{Bourgain4} J. Bourgain, \textit{Averages in the plane over convex curves and maximal operators.} J. Analyse Math. \textbf{47} (1986), 69--85.
\bibitem{Bourgain5} J. Bourgain, \textit{On the Hardy-Littlewood maximal function for the cube} arXiv:1212.266v1.
\bibitem{Carbery} A. Carbery, \emph{An almost-orthogonality principle with applications to maximal functions associated to convex bodies.} Bull. Amer. Math. Soc. (N.S.) \textbf{14} (1986), no. 2, 269--273.
\bibitem{Criado} A. Criado, \emph{On the lack of dimension free estimates in $L^p$ for maximal functions associated to radial measures.} Proc. Roy. Soc. Edinburgh Sect. A \textbf{140} (2010), no. 3, 541-552.
\bibitem{tesis} A. Criado, \emph{Problems of harmonic analysis in high dimensions.} Ph.D. thesis, Universidad Aut\'onoma de Madrid, 2012.
\bibitem{CriadoSjogren} A. Criado, P. Sj\"ogren, \emph{Weak $L^p$ bounds for maximal functions associated to rotational invariant measures in $\real^n$ for large $n$.} To appear in  J. Geom. Anal.
\bibitem{DuoandiVega} J. Duoandikoetxea, L. Vega, \emph{Spherical means and weighted inequalities.} J. London Math. Soc. (2) \textbf{53} (1996), no. 2, 343--353.
\bibitem{CuervaRubiodeFrancia} J. Garc\'{\i}a-Cuerva, J.L. Rubio de Francia, \emph{Weighted norm inequalities and related topics.} North-Holland Mathematics Studies, 116. Notas de Matem\'atica [Mathematical Notes], 104. North-Holland Publishing Co., Amsterdam, 1985.
\bibitem{Garsia} A.M. Garsia, \textit{Topics in almost everywhere convergence.}
Lectures in Advanced Mathematics, 4 Markham Publishing Co., Chicago, Ill. 1970.
\bibitem{Guzman} M. de Guzm\'an, \emph{Differentiation of Integrals in $\real^n$.} Lecture Notes in Math., Vol. 481. Springer-Verlag, Berlin-New York, 1975.
\bibitem{AdrianInfante} A. Infante, \emph{An\'alisis arm\'onico para medidas no doblantes: operadores maximales con respecto a la medida gaussiana}, Ph.D. thesis, Universidad Aut\'onoma de Madrid, 2005.
\bibitem{Infante} A. Infante, \emph{Free-dimensional boundedness of the maximal operator.} Bol. Soc. Mat. Mexicana (3) \textbf{14} (2008), no. 1, 67--73.
\bibitem{Lindenstrauss} E. Lindenstrauss, \emph{Pointwise theorems for amenable groups.} Invent. Math. \textbf{146} (2001), no. 2, 259--295.
\bibitem{MenarguezTesis} M.T. Men\'arguez, \emph{T\'ecnicas de discretizaci\'on en An\'alisis Arm\'onico para el estudio de acotaciones d\'ebiles de operadores maximales e integrales singulares.} Ph. D. thesis, Universidad Complutense, Madrid, 1990.
\bibitem{MenarguezSoria} M.T. Men\'arguez, F. Soria, \emph{Weak type (1,1) inequalities for maximal convolution operators.} Rend. Circ. Mat. Palermo (2) \textbf{41} (1992), no. 3, 342--352.
\bibitem{MenarguezSoria2} M.T. Men\'arguez, F. Soria, \emph{On the maximal operator associated to a convex body in $\real^n$.} Collect. Math. \textbf{3} (1992), 243--251.
\bibitem{MuckenhouptStein} B. Muckenhoupt, E.M. Stein, \emph{Classical expansions and their relation to conjugate harmonic functions.} Trans. Amer. Math. Soc. \textbf{118} 1965 17--92.
\bibitem{Muller} D. M\"uller, \emph{A geometric bound for maximal functions associated to convex bodies.} Pacific J. Math. \textbf{142} (1990), no. 2, 297--312.
\bibitem{NaorTao} A. Naor, T. Tao \emph{Random martingales and localization of maximal inequalities}, J. Funct. Anal. \textbf{259} (2010), no. 3, 731-779.
\bibitem{RubiodeFrancia} J.L. Rubio de Francia, \emph{Maximal functions and Fourier transforms.} Duke Math. J. \textbf{53} (1986), no. 2, 395-–404.
\bibitem{Stein} E.M. Stein, \textit{Maximal functions. I. Spherical means} Proc. Nat. Acad. Sci. U.S.A. \textbf{73} (1976), no. 7, 2174--2175.
\bibitem{Stein1} E.M. Stein, \emph{The development of square functions in the work of A. Zygmund.} Bull. Amer. Math. Soc. (N.S.) \textbf{7} (1982), no. 2, 359--376.
\bibitem{SteinStromberg} E.M. Stein, J.O. Str\"omberg, \emph{Behavior of maximal functions in $R^n$ for large n.} Ark. Mat. \textbf{21} (1983), no. 2, 250--269.
\bibitem{SteinWeiss} E.M. Stein, G. Weiss, \emph{Introduction to Fourier analysis on Euclidean spaces.} Princ. Math. Ser., No. 32. Princeton University Press, Princeton, N.J., 1971.
\end{thebibliography}
\end{document}